\theoremstyle{definition}
\newtheorem{definition}{Definition}[section]
\newtheorem{remark}[definition]{Remark}
\newtheorem{question}[definition]{Question}
\theoremstyle{plain}
\newtheorem{theorem}[definition]{Theorem}
\newtheorem{proposition}[definition]{Proposition}
\newtheorem{lemma}[definition]{Lemma}
\newtheorem{corollary}[definition]{Corollary}
\newcommand*{\mrmc}{\mathrm{c}}
\newcommand*{\mrme}{\mathrm{e}}
\newcommand*{\mcle}{\mathcal E}
\newcommand*{\mclh}{\mathcal H}
\newcommand*{\expe}{\mathbb E}
\newcommand*{\setle}{\subseteq}
\newcommand{\HitH}{{T_{H}}}
\newcommand{\HitG}{{T_{G}}}
\newcommand{\HitHS}{{T_{H}^*}}
\newcommand{\HitGS}{{T_{G}^*}}
\newcommand{\HitHP}{{T'_{H}}}
\newcommand{\histj}{{\mathbf{Hist}_j}}
\newcommand{\Bone}{{\mathcal{B}_1}}
\newcommand{\Btwo}{{\mathcal{B}_2}}
\newcommand{\Bthree}{{\mathcal{B}_3}}
\newcommand{\Bfour}{{\mathcal{B}_4}}
\newcommand{\Bfive}{{\mathcal{B}_5}}
\newcommand{\bN}{{\mathbf{N}}}
\newcommand{\bY}{{\mathbf{Y}}}
\newcommand{\bS}{{\mathbf{S}}}
\DeclareMathOperator{\cl}{cl}
\renewcommand{\epsilon}{\varepsilon} 
\newcommand{\mc}{\mathcal}
\renewcommand{\le}{\leqslant}
\renewcommand{\leq}{\leqslant}
\renewcommand{\ge}{\geqslant}
\renewcommand{\geq}{\leqslant}
\begin{document}
	\title{The hitting time of clique factors }
	\author{Annika Heckel\thanks{Matematiska institutionen, Uppsala universitet, Box 480, 751 06 Uppsala, Sweden. Email: \href{mailto:annika.heckel@math.uu.se}{\nolinkurl{annika.heckel@math.uu.se}}. The research leading to these results has received funding from the European Research Council, ERC Grant Agreement 772606–PTRCSP, and from the Swedish Research Council, reg.~nr. 2022-02829.}
		\and
		Marc Kaufmann\thanks{Institut für Theoretische Informatik, ETH Zürich, Zürich, Switzerland. Email: \href{mailto:marc.kaufmann@inf.ethz.ch}{\nolinkurl{marc.kaufmann@inf.ethz.ch}}. The author gratefully acknowledges support by the Swiss National Science Foundation [grant number 200021\_192079].}
		\and
		Noela Müller\thanks{Department of Mathematics and Computer Science, Eindhoven University of Technology, PO Box 513, 5600 MB Eindhoven, The Netherlands. Email: \href{mailto:n.s.muller@tue.nl}{\nolinkurl{n.s.muller@tue.nl}}. Research supported by NWO Gravitation project NETWORKS under grant no. 024.002.003}
		\and
		Matija Pasch\thanks{Mathematisches Institut, Ludwig-Maximilians-Universität München, Theresienstr. 39, 80333 München, Germany. Email: \href{mailto:pasch@math.lmu.de} {\nolinkurl{pasch@math.lmu.de}}. The research leading to these results has received funding from the European Research Council, ERC Grant Agreement 772606–PTRCSP.}
	}
 \date{February 16, 2023}
	\maketitle

 \begin{abstract}
  In \cite{kahn2022hitting}, Kahn gave the strongest possible, affirmative, answer to Shamir's problem, which had been open since the late 1970s: Let $r \ge 3 $ and let $n$ be divisible by $r$. Then, in the random $r$-uniform hypergraph process on $n$ vertices, as soon as the last isolated vertex disappears, a perfect matching emerges. In the present work, we transfer this hitting time result to the setting of clique factors in the random graph process: At the time that the last vertex joins a copy of the complete graph $K_r$, the random graph process contains a $K_r$-factor. Our proof draws on a novel sequence of couplings, extending techniques of Riordan \cite{riordan2022random} and the first author \cite{heckel2020random}. An analogous result is proved for clique factors in the  $s$-uniform hypergraph process ($s \ge 3$). 
     
 \end{abstract}

	\section{Introduction}
When can we cover the vertices of a graph with disjoint isomorphic copies of a small subgraph? The study of this question goes back at least to 1891, when Julius Petersen, in his \textit{Theorie der regulären graphs} \cite{petersen1891}, provided sufficient conditions for a graph to contain a perfect matching, that is, a vertex cover with pairwise disjoint edges. In the realm of random (hyper-)graphs, such conditions are best phrased in terms of edge probabilities.
Let $H_r(n, \pi)$ be the random $r$-uniform hypergraph on the vertex set $V=[n]$ where each of the $N_r = {n \choose r}$ possible hyperedges of size $r$ is present independently with probability $\pi$. The binomial random graph in this notation is then $G(n, p)=H_2(n, p)$. In 1979, Shamir asked the following natural question, as reported by Erd\H{o}s \cite{erdHos1981combinatorial}:
\begin{question}
    How large does $\pi=\pi(n)$ need to be for $H_r(n, \pi)$ to contain a perfect matching whp\footnote{We say that a sequence of events $(E_n)_{n \ge 1}$ holds with high probability (whp) if $P(E_n) \rightarrow 1$ as $n \rightarrow \infty$.}, that is, a collection of $n/r$ vertex-disjoint hyperedges? 
\end{question}
Here and in the following, we implicitly assume $n \in r \mathbb Z_+$ whenever necessary. A closely related question, posed by Ruci\'nski~\cite{rucinski1992matching} and Alon and Yuster~\cite{alon1993threshold}, is: 
 \begin{question}
   For which $p=p(n)$ does the random graph $G(n,p)$ contain a $K_r$-factor whp? 
 \end{question}
 That is, for which $p(n)$ does $G(n,p)$ contain a collection of $n/r$ vertex-disjoint cliques of size $r$, also known as a perfect $r$-clique tiling? In the following, we will also call a copy of $K_r$ an $r$-clique.
 For $r=2$, the two questions are the same --- and thanks to Erd\H{o}s and Rényi \cite{erdos1966existence}, we have known since 1966 that there is a sharp threshold\footnote{Recall that a sequence $p^*=p^*(n)$ is called a \emph{sharp threshold} for a graph property $\mc{P}$, if for all fixed $\epsilon>0$ we have $G(n,p) \notin \mc{P}$ whp if $p(n) <(1-\epsilon)p^*(n)$, and $G(n, p) \in \mc{P}$ whp if $p(n) >(1+\epsilon)p^*(n)$. For a (weak) threshold, the conditions become $p=o(p^*)$ and $p^*=o(p)$, respectively.} for the existence of a perfect matching at $p_0=\frac{\log n}{n}$. The lower bound for this is immediate: At $p=(1-\varepsilon)p_0$, some vertices in the graph are still isolated, so there cannot be a perfect matching. The upper bound relies on Tutte's Theorem, for which there is no known hypergraph analogue. \par
 Hence, for the case $r \ge 3$, these questions remained some of the most prominent open problems in random (hyper-)graph theory. Initial results on perfect matchings in random $r$-uniform hypergraphs were obtained by Schmidt and Shamir \cite{SCHMIDT1983287} - guaranteeing a perfect matching for hypergraphs with expected degree $\omega(\sqrt{n})$, with improvements by Frieze and Janson \cite{frieze1995} to $\omega(n^{\frac{1}{3}})$ and further to $\omega\big(n^{1/(5+2/(r-1))}\big)$ by Kim \cite{kim2003}. For clique factors, even determining the special case of triangle factors proved hard, despite partial results by Alon and Yuster \cite{alon1993threshold}, Ruci\'{n}ski \cite{rucinski1992matching} and Krivelevich \cite{Krivelevich1997TriangleFI}.
 
 Finally, both questions were jointly resolved up to constant factors by Johansson, Kahn and Vu in their seminal paper \cite{johansson2008factors}. It had long been assumed that, as in the case $r=2$, the main obstacle in finding a perfect matching in $H_r(n,\pi)$ were \emph{isolated vertices}, that is, vertices not contained in any hyperedge. 
 In the clique factor setting, the obstacle corresponding to isolated vertices are vertices not contained in any $r$-clique. Let
\[
\pi_0 =\pi_0(r)=  \frac{\log n}{{n-1 \choose r-1}} \quad \text{and} \quad p_0=p_0(r)=\pi_0^{1/{r \choose 2}};
\]
then $\pi_0$ and $p_0$ are known to be sharp thresholds for the properties `minimum degree at least $1$' in $H_r(n,\pi)$ and `every vertex is covered by an $r$-clique' in $G(n,p)$, respectively \cite{devlin2017,janson2000random}. 
Johansson, Kahn and Vu \cite{johansson2008factors} showed that $\pi_0$ and $p_0$ are indeed (weak) thresholds for the existence of a perfect matching in $H_r(n,\pi)$ and for the existence of an $r$-clique factor in $G(n,p)$, respectively.

Recently, Kahn \cite{kahn2019asymptotics} proved that $\pi_0$ is in fact a \emph{sharp} threshold for the existence of a perfect matching in $H_r(n,\pi)$. Indeed, he was able to confirm the conjecture that isolated vertices are essentially the only obstacle, and thereby answer Shamir's question, in the strongest possible sense: \par 
Let $\mathbf{h}_1, \dots, \mathbf{h}_{N_r}$ be a uniformly random order of the hyperedges in ${V \choose r}$, then the \emph{random $r$-uniform hypergraph process} $(H_t^r)_{t=0}^{N_r}$ is given by
\[
H^r_t = \{\mathbf{h}_1, \dots, \mathbf{h}_t\}.
\]
Let
\[
\HitH = \mathrm{min}\{ t : H_t^r \text{ has no isolated vertices}\}
\]
be the hyperedge cover hitting time, i.e., the time $t$ where the last isolated vertex `disappears' by being included in a hyperedge. In the graph case $r=2$, Bollob\'as and Thomason \cite{bollobas1985} proved in 1985 that  this hitting time whp coincides with the hitting time for a perfect matching, showing that as soon as the last vertex is covered by an edge, whp there is a perfect matching.  Kahn \cite{kahn2022hitting} showed that this is indeed also the case when $r \ge 3$:

\begin{theorem}[\cite{kahn2022hitting}]\label{theorem:kahnhitting}
Let $r \ge 3$ and $n \in r \mathbb Z_+$, then whp $H_\HitH^r$ has a perfect matching.
\end{theorem}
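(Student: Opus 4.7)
The natural approach is to upgrade Kahn's sharp threshold theorem \cite{kahn2019asymptotics} to a hitting time statement through a two-round exposure combined with an absorbing/augmenting argument. Fix a slowly growing parameter $\omega=\omega(n) \to \infty$ and split the process at time $t_1 = \pi_0 N_r - \omega \sqrt{n \log n}$. By standard concentration on the degree sequence, whp $t_1 < \HitH$ and the set $S$ of vertices still isolated at time $t_1$ has size $|S| = O(\omega)$. The plan is to use $H^r_{t_1}$ to build a near-perfect matching missing exactly $S$, then use the hyperedges revealed between $t_1$ and $\HitH$ to extend it to a perfect matching.

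The crucial step is a \emph{robust} version of the Johansson--Kahn--Vu result: whp, $H^r_{t_1}$ has the property that for every small set $T \subset V$ and every choice of pairwise disjoint hyperedges $\{f_v : v \in T\}$ with $v \in f_v$, the hypergraph $H^r_{t_1}$ restricted to $V \setminus \bigcup_{v \in T} f_v$ contains a perfect matching. I would prove this by establishing an abundance of local absorbers: small rearrangeable structures (for instance, augmenting stars, or pairs of partial matchings differing along a short alternating configuration) that can be toggled to release or include any single vertex. A first moment computation at density $\pi_0$, where every vertex lies in $\Theta(\log n)$ hyperedges, should produce many candidate absorbers; a second moment or concentration step would show they appear robustly, uniformly in $T$.

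Given this robustness, the endgame is direct. At time $\HitH$, each $v \in S$ is covered by some hyperedge $e_v$ with $t_1 < \mathrm{time}(e_v) \le \HitH$, and whp the $e_v$ are pairwise disjoint (simultaneous overlaps being a lower order event since $|S|$ is small and the edges appearing after $t_1$ form a very sparse hypergraph). Applying the robustness property with $T = S$ and $f_v = e_v$ produces a perfect matching on $V \setminus \bigcup_{v \in S} e_v$ inside $H^r_{t_1}$, and adjoining the $e_v$ yields a perfect matching of $H^r_{\HitH}$.

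The main obstacle is the robustness lemma. Since we are operating at the threshold $\pi_0$, each individual absorber is a rare structure, so one must guarantee absorbers exist simultaneously for all forbidden configurations of size $O(\omega)$. This likely requires leveraging the full strength of the entropy machinery behind \cite{kahn2019asymptotics}, or alternatively a spread/fractional Kahn--Kalai framework to convert "a sufficiently spread collection of perfect matchings exists" into the statement that uniformly distributed matchings can be made to avoid arbitrary prescribed hyperedges. Controlling the coupling between the hitting time $\HitH$ and the structure of $H^r_{t_1}$ (which determines the absorbing capacity) is the delicate conditioning issue that any such approach must overcome.
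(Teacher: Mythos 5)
This statement is not proved in the paper at all: it is Kahn's hitting time theorem, quoted verbatim from \cite{kahn2022hitting} and used as a black box (the paper's contribution is to transfer it to clique factors via a chain of couplings). So the only question is whether your sketch would constitute a proof, and it would not. The entire content of the theorem is hidden inside your ``robust version of the Johansson--Kahn--Vu result,'' which you state but do not prove, and which is at least as hard as the theorem itself. At density $\pi_0$ the perfect matching property is only barely satisfied: Kahn's sharp threshold result gives failure probability $o(1)$, not $n^{-\Omega(\omega)}$, so you cannot union bound over the $n^{\Theta(r\omega)}$ choices of $T$ and $\{f_v\}$. Your fallback of counting absorbers by first and second moments also fails at this density: standard absorbing structures for perfect matchings are only known to exist robustly when the edge probability is polynomially above the threshold, and indeed neither \cite{johansson2008factors} nor \cite{kahn2019asymptotics, kahn2022hitting} is an absorption argument --- they rest on delicate entropy--counting machinery precisely because absorption does not reach the threshold. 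Saying the lemma ``likely requires leveraging the full strength of the entropy machinery'' is an acknowledgement that the proof is missing, not a proof.

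There are also smaller parameter issues that signal the sketch has not been worked through: with $t_1=\pi_0N_r-\omega\sqrt{n\log n}$ the expected number of isolated vertices at time $t_1$ is $\Theta(1)$ (the critical window has width $\Theta(n)$ in edge count, and $\sqrt{n\log n}=o(n)$), so $t_1<\HitH$ fails with probability bounded away from $0$; you would need an offset of order $h(n)\cdot n$ with $h\to\infty$. And the conditioning problem you flag at the end --- that $S$ and the structure of $H^r_{t_1}$ are heavily correlated, and that the edges arriving in $(t_1,\HitH]$ are conditioned on covering $S$ --- is exactly the kind of difficulty that makes the hitting time statement genuinely harder than the sharp threshold statement; deferring it does not resolve it. In short: the architecture (two-round exposure plus an augmentation at the end) is a reasonable opening move, but the proposal contains no proof of its key lemma and the suggested routes to that lemma would fail at the threshold density.
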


Can we get a similarly strong answer to the clique factor question? For $r=3$, the question whether a triangle factor exists in the random graph process as soon as every vertex is covered by a triangle was attributed to Erd\H{o}s and Spencer in \cite[\S5.4]{chung1998erdos}.    
This question seems much harder than its Shamir counterpart because, unlike hyperedges in the random hypergraph, cliques do not appear independently of each other. However, for sharp thresholds it has indeed been possible to \emph{reduce} the clique factor problem to the perfect matching problem, using the following coupling result of Riordan (for $r\ge4$) and the first author (for $r=3$):
\begin{theorem}[\cite{heckel2020random,riordan2022random}]\label{theorem:riordancoupling}
	Let $r \ge 3$. There are constants $\varepsilon(r),\delta(r)>0$ such that, for any $p=p(n) \le n^{-2/r + \varepsilon}$, letting $\pi=p^{r \choose 2}(1-n^{-\delta})$, we may couple the random graph $G=G(n,p)$ with the random $r$-uniform hypergraph $H=H_r(n,\pi)$ so that, whp, for every hyperedge in $H$ there is a copy of $K_r$ in $G$ on the same vertex set.\footnote{In  \cite{heckel2020random,riordan2022random}, Theorem~\ref{theorem:riordancoupling} was given with an unspecified $o(1)$-term in place of $n^{-\delta}$; the formulation above is Remark 4 in \cite{riordan2022random} and in the case $r=3$, an unnumbered remark near the end of~\cite{heckel2020random}.}
\end{theorem}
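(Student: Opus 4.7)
My plan is to construct the coupling in three stages. In stage one, sample the hypergraph $H \sim H_r(n, \pi)$ with $\pi = p^{\binom{r}{2}}(1 - n^{-\delta})$. In stage two, build an auxiliary graph $G'$ on $[n]$ by \emph{forcing} the $\binom{r}{2}$ edges of each hyperedge $S \in H$ into $G'$, and then including every remaining pair independently with a probability $q \in (0,1)$ chosen so that the marginal probability of each edge of $G'$ equals $p$ exactly; concretely, if $p_1 = 1 - (1-\pi)^{\binom{n-2}{r-2}}$ denotes the probability that a given pair is forced, one sets $q = (p - p_1)/(1 - p_1)$, which is well-defined and positive in our regime since $p_1 = o(p)$. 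In stage three, bound the total variation distance between the law of $G'$ and $G(n,p)$, and invoke the standard maximal-coupling lemma to produce an exact joint distribution of $(G, H)$ with the correct marginals, on which every hyperedge of $H$ spans a $K_r$ in $G$ whp.

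By construction, every $S \in H$ spans a clique in $G'$, so the central work is bounding the total variation distance between $G'$ and $G(n,p)$. Edges of $G'$ would be fully independent were it not for two sources of correlation: first, the $\binom{r}{2}$ edges within a single hyperedge $S \in H$ appear together as a block; second, two distinct hyperedges $S, S' \in H$ with $|S \cap S'| \geqslant 2$ force a shared edge twice and induce positive correlation between edges supported on $S \cup S'$. I would quantify these correlations by computing, for each $j \in \{2, \dots, r-1\}$, the expected number of pairs $(S, S') \in H \times H$ with $|S \cap S'| = j$. For $p \leqslant n^{-2/r + \varepsilon}$ with $\varepsilon$ sufficiently small in terms of $r$ and $\delta$, a direct moment calculation shows these expectations are each $n^{-\Omega(1)}$ relative to the expected number of hyperedges in $H$, so that the two laws agree up to an event of probability $n^{-\Omega(1)}$.

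The main obstacle, and the reason for the restriction $p \leqslant n^{-2/r + \varepsilon}$, is that the density of $r$-cliques in $G(n,p)$ grows rapidly with $p$ and the overlap-type error terms described above blow up for larger $p$. A secondary technical difficulty is that forcing all edges of a hyperedge simultaneously creates block dependencies that are awkward to compare directly with the independent edges of $G(n,p)$; the cleanest route around this is to work in a Poissonised framework, first replacing the binomial count of hyperedges in $H$ by a Poisson count, running the error analysis there, and then de-Poissonising at the end via standard concentration estimates. The $n^{-\delta}$ slack built into the definition of $\pi$ is precisely the budget that allows these error terms to be absorbed and the coupling to go through.
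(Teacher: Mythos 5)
Your stage-two construction does not produce a graph close in total variation to $G(n,p)$, and this sinks the whole plan. Matching the \emph{marginal} edge probability $p$ is not enough: in $G'$ the $r$-cliques arise in two essentially disjoint ways, namely the forced cliques on the hyperedges of $H$ (expected number $\binom{n}{r}\pi \sim \binom{n}{r}p^{\binom{r}{2}}$) and the ``spontaneous'' cliques produced by the independent coins at rate $q\sim p$ (expected number $\binom{n}{r}q^{\binom{r}{2}} \sim \binom{n}{r}p^{\binom{r}{2}}$). So $G'$ has asymptotically \emph{twice} as many $r$-cliques as $G(n,p)$. In the regime of interest (e.g.\ $p$ near the threshold $p_0$, where the clique count is $\Theta(n\log n)$ with variance of the same order as the mean, since overlapping clique pairs contribute negligibly for $p\le n^{-2/r+\varepsilon}$), the clique count concentrates, so an observer who simply counts $r$-cliques distinguishes the two laws with probability $1-o(1)$. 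Hence $d_{TV}(G', G(n,p)) = 1-o(1)$ and the maximal-coupling step in stage three cannot be carried out; the overlap statistics you propose to compute in stage two are irrelevant to this obstruction, and no amount of Poissonisation or use of the $n^{-\delta}$ slack repairs it. This is precisely the phenomenon the paper flags when it notes that one cannot hope to find a hyperedge of $H$ for every clique of $G$: the hyperedges of $H$ must account for essentially \emph{all} of the cliques of $G(n,p)$, not sit alongside an independent population of equal size.

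The actual argument (Riordan for $r\ge4$, Heckel for $r=3$, reviewed in \S\ref{section:riordanalgo}--\ref{section:r3algo}) avoids planting entirely. It processes the potential hyperedges $h_1,\dots,h_M$ sequentially; at step $j$ it computes $\pi_j$, the conditional probability given everything revealed so far that the $r$-set $h_j$ spans a clique \emph{in $G(n,p)$ itself}, and, provided $\pi_j\ge\pi$, uses a coin of bias $\pi/\pi_j$ to decide whether to actually test the clique event, declaring $h_j\in H$ exactly when the coin lands heads and the clique is present. This makes $h_j\in H$ occur with probability exactly $\pi$ while guaranteeing $H\subseteq\cl(G)$ deterministically, and the real work is showing that whp $\pi_j\ge\pi$ at every step (via the bad events $\Bone,\Btwo$, plus the clean-$3$-cycle preprocessing when $r=3$). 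If you want to salvage a planting-style intuition, you would have to couple so that the planted cliques are \emph{identified with} (a $(1-n^{-\delta})$-fraction of) the cliques that $G(n,p)$ already has, which is exactly what the adaptive conditional-probability scheme accomplishes and what an independent sprinkling on top of a planted $H$ cannot.
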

Together with Kahn's sharp threshold result \cite{kahn2019asymptotics}, the following corollary is immediate.
\begin{corollary}
There is a sharp threshold for the existence of a $K_r$-factor at $p_0$.
	\end{corollary}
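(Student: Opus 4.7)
The plan is to derive the corollary directly from Theorem~\ref{theorem:riordancoupling} and Kahn's sharp threshold result \cite{kahn2019asymptotics} for perfect matchings in $H_r(n,\pi)$ at $\pi_0$, splitting into a trivial lower bound and an upper bound obtained by pulling a perfect matching back through the coupling.

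For the lower bound I would invoke the results of \cite{devlin2017,janson2000random} already cited in the introduction, which say that $p_0$ is a sharp threshold for the property that every vertex of $G(n,p)$ is contained in some $r$-clique: whenever $p \leqslant (1-\varepsilon)p_0$, there is whp an uncovered vertex, which immediately forbids a $K_r$-factor. For the upper bound I would fix $\varepsilon>0$ and, by monotonicity of the property ``contains a $K_r$-factor'', reduce to the case $p = (1+\varepsilon)p_0$. A short calculation gives $p_0 = \Theta\bigl(n^{-2/r}(\log n)^{2/(r(r-1))}\bigr)$, so for any fixed $\varepsilon'>0$ and $n$ large one has $p \leqslant n^{-2/r+\varepsilon'}$, placing $p$ inside the range of validity of Theorem~\ref{theorem:riordancoupling}. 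Setting $\pi := p^{\binom{r}{2}}(1-n^{-\delta})$ as in that theorem and using the definition $\pi_0 = p_0^{\binom{r}{2}}$, I would verify that
\[
\pi \;=\; (1+\varepsilon)^{\binom{r}{2}}(1-n^{-\delta})\pi_0 \;\geqslant\; (1+\varepsilon'')\pi_0
\]
for some fixed $\varepsilon''>0$ and all large $n$, since the constant factor $(1+\varepsilon)^{\binom{r}{2}}>1$ easily absorbs the vanishing $n^{-\delta}$ correction. Kahn's theorem then supplies a perfect matching in $H_r(n,\pi)$ whp, and Theorem~\ref{theorem:riordancoupling} transports it to $n/r$ vertex-disjoint copies of $K_r$ in $G(n,p)$, i.e.\ a $K_r$-factor.

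The only point requiring any care is the quantitative comparison of the two threshold scales, but the identity $\pi_0 = p_0^{\binom{r}{2}}$ makes this essentially automatic. Everything substantive is already packaged inside Theorem~\ref{theorem:riordancoupling} and Kahn's sharp threshold, so there is no genuine obstacle here; this is why the authors label the statement a corollary rather than a theorem. Of course, the stronger hitting-time version---the actual main result of the paper---does \emph{not} follow from this soft argument, and will instead require the refined sequence of couplings developed in the remainder of the paper.
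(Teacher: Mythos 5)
Your argument is correct and is exactly the one the paper intends: the corollary is stated as immediate from Theorem~\ref{theorem:riordancoupling} together with Kahn's sharp threshold for perfect matchings, with the lower bound supplied by the clique-cover threshold at $p_0$; you have simply written out the (routine) verification that $p=(1+\varepsilon)p_0$ lies in the coupling's range and that the corresponding $\pi$ exceeds $(1+\varepsilon'')\pi_0$. No issues.
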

  In the same spirit, we wish to transfer Kahn's hitting time theorem, Theorem~\ref{theorem:kahnhitting}, directly to the random graph process setting, showing its clique factor analogue. 
  Such a derivation of the factor result from its Shamir counterpart was believed to be out of reach --- Kahn remarks in \cite{kahn2019asymptotics} that `there seems little chance of anything analogous' for Theorem~\ref{theorem:kahnhitting}, and in \cite{kahn2022hitting} that the connection between the factor version and the Shamir version of the result `seems unlikely to extend to' Theorem~\ref{theorem:kahnhitting}.
  
  One important reason for this is that the original coupling provides merely a one-way bound. While it guarantees a copy of $K_r$ in $G=G(n,p)$ on the same vertex set for every $h$ in $H=H_r(n,\pi)$, we cannot, as observed by Riordan \cite{riordan2022random}, expect to find a corresponding hyperedge of $H$ for every $K_r$ in G, since there we will find roughly $n^{2r-2}p^{2\binom{r}{2}-1}$ pairs of $K_r$ sharing two vertices, which is much larger than the expected number $n^{2r-2}\pi^2$ of pairs of hyperedges of $H$ sharing two vertices.
  A second obstacle is that whenever we do have such a pair of overlapping hyperedges in $H$, the corresponding cliques in $G$ will not appear independently of each other in the associated random graph process --- for example the shared edge could be the last to appear, and then those cliques emerge simultaneously in the random graph process. And indeed, extra cliques and pairs of overlapping cliques do pose a challenge, but as we will show subsequently, they will not appear `near' those candidate vertices which may be among the last vertices to be covered by cliques. \par

  Now, let \[(G_t)_{t=0}^{N_2}\] be the random graph process, which is given by the random $r$-uniform hypergraph process in the special case $r=2$. Define
\[
T_G = \mathrm{min}\{ t : \text{ every vertex in $G_t$ is contained in at least one $r$-clique}\}
\]
 as the hitting time of an $r$-clique cover. Then, to transfer Kahn's hitting time result to the clique factor setting, we need to find a copy of $H^r_\HitH$ within the cliques of $G_{T_G}$. That this can be achieved is our main result:
  \begin{theorem} \label{theorem:finalcoupling}
 	Let $r \ge 3$. We may couple the random graph process $(G_t)_{t=0}^{N_2}$ with the random $r$-uniform hypergraph process  $(H^r_t)_{t=0}^{N_r}$ so that, whp, for every hyperedge in $H^r_\HitH$ there is a clique in $G_{\HitG}$ on the same vertex set. 
 \end{theorem}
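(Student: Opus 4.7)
The plan is to extend the static coupling of Theorem~\ref{theorem:riordancoupling} to a coupling of the two processes by first concentrating both hitting times into narrow density windows, and then defining the arrival times of hyperedges as a deterministic monotone function of the completion times of their matched cliques.

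For the concentration, standard Poisson-approximation for the number of vertices not covered by any $r$-clique (respectively any hyperedge) gives whp $T_G = (1 + o(1)) N_2 p_0$ and $T_H = (1 + o(1)) N_r \pi_0$, with errors on the $\log\log n / \log n$ scale. Fix $\eta = \eta(n) \to 0$ slightly slower than $\log\log n/\log n$ and set $p^+ = p_0(1+\eta)$, $\pi^+ = (p^+)^{\binom{r}{2}}(1 - n^{-\delta})$; both hitting times then lie whp in the windows $[0, N_2 p^+]$ and $[0, N_r \pi^+]$. Inside these windows, invoke Theorem~\ref{theorem:riordancoupling} at density $p^+$ to couple $G(n, p^+)$ with $H_r(n, \pi^+)$ so that every hyperedge $h$ carries a matched clique $K(h)$ on the same vertex set. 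Equip the edges of $G(n, p^+)$ with iid $\mathrm{Uniform}[0, p^+]$ arrival times $U_e$ to realise the graph process restricted to $[0, N_2 p^+]$, and for each matched hyperedge $h$ define
\[
U_h := \pi^+ \bigl(\sigma(K(h))/p^+\bigr)^{\binom{r}{2}}, \qquad \sigma(K(h)) := \max_{e \in K(h)} U_e.
\]
A direct calculation shows that $U_h$ is marginally $\mathrm{Uniform}[0, \pi^+]$, and the monotone rescaling makes the equivalence $U_h \leq \pi_0 \Leftrightarrow \sigma(K(h)) \leq p_0(1 + O(n^{-\delta}))$ deterministic. Combined with the windowing, this gives exactly what we need: every hyperedge in $H^r_{\HitH}$ has all edges of its matched clique present in $G_{\HitG}$.

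The main obstacle is that the $U_h$'s defined above are \emph{not} jointly independent: whenever two matched cliques $K(h_1), K(h_2)$ share an edge, their completion times, and hence the associated hyperedge arrival times, become correlated, so the induced law on the hypergraph is only approximately that of the random hypergraph process. The bulk of the proof must therefore upgrade this approximate model to the true one via a further sequence of couplings. Building on the techniques of \cite{heckel2020random, riordan2022random}, one argues that overlaps between matched cliques are sparse enough --- and, crucially, very unlikely to occur near the ``candidate vertices'' that may be among the last to be covered --- so that the joint law of $(U_h)_h$ is within $o(1)$ total variation of iid $\mathrm{Uniform}[0, \pi^+]$; a total-variation coupling then converts it into a genuine hypergraph process while preserving the hyperedge-to-clique correspondence. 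Translating the static overlap analysis of \cite{heckel2020random, riordan2022random} into this time-stamped setting, and verifying that the correlated hyperedge arrivals cannot perturb the hypergraph hitting time outside its concentration window, is the principal technical challenge.
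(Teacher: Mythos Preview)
Your sketch correctly identifies both the natural starting point --- push the static coupling to a process coupling by letting the hyperedge arrival time be a monotone function of the clique completion time $\max_{e\in E(h)}U_e$ --- and the obstacle, namely that partner hyperedges (pairs $h_1,h_2$ with $|h_1\cap h_2|=2$) have correlated arrival times because their cliques share one edge. But your proposed resolution, that the joint law of $(U_h)_h$ is within $o(1)$ total variation of the iid model, is false. In $H(n,\pi_+)$ there are whp $\Theta(\log^2 n)$ vertex-disjoint pairs of partner hyperedges; for each such pair the shared edge is the last to arrive in \emph{both} cliques with probability $1/(2\binom{r}{2}-1)=\Theta(1)$, an event of measure zero under independence, so the TV gap for a single pair is bounded away from $0$ and over $\Theta(\log^2 n)$ disjoint pairs the total-variation distance between the two orders tends to $1$, not $0$. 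Restricting attention to the law of the stopped set $H_{T_H}$ does not help either: for each partner pair the membership indicators in $H_{T_H}$ differ between the two models with probability $\Theta(g(n)/\log n)$, and $(1-\Theta(g(n)/\log n))^{\Theta(\log^2 n)}\to 0$. Your observation that overlaps stay away from candidate vertices is correct and is used in the paper, but for a different purpose (to show the auxiliary hitting times satisfy $t_G=t_H$ and that extra cliques do not affect $T_G$), not to rescue a TV argument.

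The paper's resolution is structurally different. First (Proposition~\ref{prop:coupling2}), it makes $\sigma_H$ \emph{exactly} uniform from the start: for one hyperedge $v_i$ in each partner pair $(u_i,v_i)$ it replaces the shared uniform by a fresh dummy variable $\xi'_i$ in the definition of $\tau(v_i)$, so that the $\tau(h)$ depend on disjoint iid families. The price is a small exceptional set $\mathcal E\subset H_{T_H}$ of hyperedges whose matched clique may not yet be present in $G_{T_G}$; one shows every $h\in\mathcal E$ has its partner arriving in the short window $(T_H,\,T_H+\lfloor g(n)n\rfloor]$. Second (Proposition~\ref{prop:couplingrandomset}), $\mathcal E$ is embedded whp into a binomial subset $\mathcal R\subset H_{T_H}$ of density $\Theta(g(n)/n)$ by realising the post-$T_H$ process via an auxiliary sprinkling. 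Third (Proposition~\ref{prop:couplingnewprocess}), deleting $\mathcal R$ from $H_{T_H}$ whp still yields an instance of the stopped hypergraph process, because the thinning is too sparse to re-isolate any vertex. Chaining these gives $H'_{T'_H}=H_{T_H}\setminus\mathcal R\subseteq H_{T_H}\setminus\mathcal E\subseteq\mathrm{cl}(G_{T_G})$ whp. So the mechanism for upgrading the approximate process to a genuine one is this dummy-edge / exceptional-set / thinning chain, not a total-variation coupling.
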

\begin{corollary}
Let $r \ge 3$ and $n \in r \mathbb Z_+$, then whp $G_\HitG$ contains a $K_r$-factor.
\end{corollary}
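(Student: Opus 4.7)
The plan is to deduce the corollary in one short step from the combination of the coupling Theorem~\ref{theorem:finalcoupling} (the paper's main technical result) and Kahn's hitting time Theorem~\ref{theorem:kahnhitting}. The strategy is to produce a perfect matching in the hypergraph process at time $\HitH$ via Kahn, then transport it through the coupling to an $r$-clique factor in the graph process at time $\HitG$.

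Concretely, I would first realize $(G_t)_{t=0}^{N_2}$ and $(H^r_t)_{t=0}^{N_r}$ on a common probability space using Theorem~\ref{theorem:finalcoupling}, so that whp every hyperedge of $H^r_{\HitH}$ has the property that its vertex set spans a copy of $K_r$ in $G_{\HitG}$; call this event $\mathcal{A}$. Since $n \in r\mathbb{Z}_+$ and $r \ge 3$, Theorem~\ref{theorem:kahnhitting} gives that whp $H^r_{\HitH}$ admits a perfect matching; call this event $\mathcal{B}$. A union bound yields $\Pb(\mathcal{A} \cap \mathcal{B}) \to 1$. On $\mathcal{A} \cap \mathcal{B}$, fix any perfect matching $M = \{h_1, \ldots, h_{n/r}\}$ of $H^r_{\HitH}$. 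By $\mathcal{A}$ each hyperedge $h_i$ spans a $K_r$ in $G_{\HitG}$, and since $M$ is a perfect matching the sets $h_1, \ldots, h_{n/r}$ partition $[n]$; hence the corresponding cliques constitute a $K_r$-factor of $G_{\HitG}$.

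The principal difficulty this derivation side-steps is precisely what Theorem~\ref{theorem:finalcoupling} resolves: the previously available Theorem~\ref{theorem:riordancoupling} gives only a static, one-directional coupling at a fixed density, whereas here we need a coupling that respects the full random processes up to their respective (random) hitting times. With Theorem~\ref{theorem:finalcoupling} in hand, however, no combinatorial work remains for the corollary itself — lifting a perfect matching in the hypergraph edge-by-edge to cliques on the same vertex sets automatically produces a $K_r$-factor in the graph, and the divisibility assumption $n \in r\mathbb{Z}_+$ is only needed so that such a perfect matching (and hence a factor) can exist at all.
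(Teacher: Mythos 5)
Your derivation is correct and is exactly the argument the paper intends (the corollary is stated without proof as an immediate consequence of Theorems~\ref{theorem:finalcoupling} and~\ref{theorem:kahnhitting}, just as the hypergraph analogue Corollary~\ref{corollary:hypergraphhitting} follows from Theorems~\ref{theorem:kahnhitting} and~\ref{theorem:finalcouplinghyper}). Coupling the two processes, intersecting the two whp events, and lifting a perfect matching of $H^r_{\HitH}$ hyperedge-by-hyperedge to vertex-disjoint $r$-cliques of $G_{\HitG}$ is precisely the intended one-step deduction.
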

\begin{remark}
One might wonder whether the construction of the coupling in Theorem \ref{theorem:finalcoupling} is such that up to time $\HitH$ and $\HitG$, the hyperedges appear in the same order in the random hypergraph process as they do as $r$-cliques in the random graph process.  The answer to this is `almost': a hyperedge which shares two vertices with another hyperedge in $H_\HitH^r$ may appear at a different time as the corresponding $r$-clique, but whp only about $\log^2 n$ such hyperedges exist in $H_\HitH^r$, and we can match up the orders of all other hyperedges and their corresponding cliques.
\end{remark}

The paper is organised as follows. After some preliminaries in  \S\ref{section:preliminaries}, Theorem~\ref{theorem:finalcoupling} is proved in \S\ref{sec_rc_rcoupling}--\ref{section:puteverythingtogether}. An overview of the proof is given in \S\ref{section:proofoverview}. In \S\ref{section:hypergraphfactor}, we formulate and prove the hypergraph analogue of Theorem~\ref{theorem:finalcoupling}. This will immediately imply that in the random $s$-uniform hypergraph process, whp an $r$-(hyper-)clique factor exists as soon as every vertex is covered by an $r$-clique (where $r>s\ge 3$).

 \section{Preliminaries}\label{section:preliminaries}
	\subsection{Notation} 

In the remainder of the paper, we fix $r \ge 3$ and usually suppress the dependence on $r$ in our notation. For example, we simply write $H_t$ instead of $H_t^r$ and so on. Let $M=N_r={n \choose r}$ and $N=N_2={n \choose 2}$. 

By an \emph{$r$-uniform hypergraph} $H$ on the vertex set $V=[n]$, we mean a subset of ${V \choose r}$, the set of all $r$-subsets of $V$. That is, we will use $H$ as a set (of sets of vertices of size $r$) for convenient notation. For a hypergraph $H$ on the vertex set $V=[n]$ and $v \in [n]$, we use $d(v)$ to denote the degree of $v$ in $H$. In a graph $G$, an \emph{$r$-clique} is a clique on $r$ vertices. We denote by $\cl(G)$ the set of vertex sets from ${V \choose r}$ which span $r$-cliques in $G$ (so $\cl(G)$ is an $r$-uniform hypergraph in the aforementioned sense).

 For two functions $a=a(n)$, $b=b(n)$, we say that $a$ is asymptotically at most $b$, denoted by $a \lesssim b$, if $a(n) \le (1+o(1))b(n)$ as $n \rightarrow \infty$.

Throughout the paper, we fix an arbitrary function $g(n)$ satisfying
\begin{equation} \label{eq:defg}
g(n) = o\big(\log n/\log \log n \big)\quad  \text{ and } \quad g(n) \rightarrow \infty.
\end{equation}

\subsection{Probabilistic tools}
We will need the following standard probabilistic tools.
\begin{theorem}[{Chernoff Bound,\cite[Equ.~2.4, Thm.~2.1]{janson2000random}}] \label{theorem:Chernoff}
For $X \sim \mathrm{Bin} (n, p)$ we have
\begin{align*}
  P(X \ge np+t)&\le\left(\frac{np}{np+t}\right)^{np+t}\left(\frac{n-np}{n-np-t}\right)^{n-np-t}
  \le\exp\left(-\frac{t^2}{2(np+t/3)}\right),& 0&\le t\le n-np,\\
  P(X \le np-t)&\le\left(\frac{np}{np-t}\right)^{np-t}\left(\frac{n-np}{n-np+t}\right)^{n-np+t}
  \le\exp\left(-\frac{t^2}{2np}\right),& 0&\le t\le np.
  \end{align*}
\end{theorem}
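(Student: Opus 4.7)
The plan is to apply the exponential moment (Cramér--Chernoff) method. First I fix a parameter $\lambda > 0$ and use Markov's inequality on the nonnegative random variable $e^{\lambda X}$:
$$P(X \ge np + t) = P\bigl(e^{\lambda X} \ge e^{\lambda(np+t)}\bigr) \le e^{-\lambda(np+t)} \, E\bigl[e^{\lambda X}\bigr].$$
Since $X \sim \mathrm{Bin}(n,p)$ is a sum of $n$ i.i.d.\ Bernoulli$(p)$ variables, the moment generating function factorises as $E[e^{\lambda X}] = (1 - p + p e^\lambda)^n$, so
$$P(X \ge np + t) \le \inf_{\lambda > 0} e^{-\lambda(np+t)} (1 - p + p e^\lambda)^n.$$

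Next I would carry out the optimisation in $\lambda$ explicitly. Setting the derivative of the log of the right-hand side to zero yields the minimiser $e^{\lambda^*} = \frac{(np+t)(1-p)}{p(n-np-t)}$, which is valid for $0 \le t \le n - np$. Substituting $\lambda^*$ back into the bound and simplifying (by separating the factors involving $np+t$ and $n - np - t$) gives exactly the sharp binomial-type expression
$$\left(\frac{np}{np+t}\right)^{np+t}\left(\frac{n-np}{n-np-t}\right)^{n-np-t}$$
claimed in the theorem. The lower-tail sharp bound follows in the same way from the change of variables $X \mapsto n - X \sim \mathrm{Bin}(n, 1-p)$, or equivalently by running the argument with $\lambda < 0$.

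Finally I would derive the exponential weakenings. Let $\varphi(x) := (1+x)\log(1+x) - x$; then taking logarithms in the sharp bound above shows that the right-hand side equals $\exp(-np\,\varphi(t/(np)) - n(1-p)\,\varphi(-t/(n(1-p))))$, and dropping the second nonnegative term gives $\exp(-np\,\varphi(t/(np)))$ for the upper tail (and analogously for the lower tail). The desired Gaussian-type bounds then reduce to the elementary inequalities $\varphi(x) \ge \frac{x^2}{2(1 + x/3)}$ for $x \ge 0$ and $\varphi(x) \ge \frac{x^2}{2}$ for $-1 < x \le 0$, which can be verified by comparing derivatives (both sides vanish to second order at $0$, and the difference is monotone on each side).

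The argument is entirely standard and the only real obstacle is the bookkeeping needed to get the precise constants, in particular the $t/3$ denominator in the upper-tail weakening: one must choose the inequality for $\varphi$ carefully so that the resulting bound is sharp in both the subgaussian ($t \ll np$) and subexponential ($t \gg np$) regimes. Since the statement is merely quoted from \cite{janson2000random}, no new ideas are required beyond reproducing this classical calculation.
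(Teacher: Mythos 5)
Your proposal is correct and is precisely the standard Cramér--Chernoff argument used in the cited reference \cite{janson2000random}: Markov's inequality applied to $e^{\lambda X}$, explicit optimisation over $\lambda$ to obtain the sharp binomial-form bound, and then the elementary inequalities $\varphi(x)\ge x^2/(2(1+x/3))$ for $x\ge 0$ and $\varphi(x)\ge x^2/2$ for $-1<x\le 0$ to deduce the exponential weakenings. The paper itself quotes this theorem without proof, so there is nothing further to compare.
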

\begin{theorem}[{Harris' Inequality, \cite[\S2, Le.~3]{bollobas2006percolation}}]
\label{theorem:harris}
Let $n \in \mathbb Z_+$, $p_1, \dots, p_n \in [0,1]$ and consider a product probability space $\Omega= \prod_{i=1}^n \Omega_i$, where each $\Omega_i$ is $\{0,1\}$ endowed with the Bernoulli distribution with parameter $p_i$. We call an event $A \subseteq \Omega$ an \emph{up-set} if, whenever $\omega \in A$ and $\omega'$ is obtained from $\omega$ by changing one component from $0$ to $1$, then $\omega' \in A$. We call $A$ a \emph{down-set}  if $\Omega \setminus A$ is an up-set.
\begin{enumerate}
    \item[a)] If $A$ and $B$ are both up-sets or both down-sets, then
\begin{align*}
P(A \cap B) \ge P(A)P(B) .
\end{align*}
\item[b)] If $A$ is an up-set and $B$ is a down-set, then
\begin{align*}
P(A \cap B) \le P(A)P(B).
\end{align*}
\end{enumerate}

\end{theorem}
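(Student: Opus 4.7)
The plan is to prove both parts by induction on $n$, reducing part (b) to part (a) via complementation. This is the standard FKG/Harris proof by slicing on the last coordinate. For the base case $n = 1$, the sample space is $\{0,1\}$ and the only up-sets are $\emptyset$, $\{1\}$, and $\{0,1\}$; the inequality in (a) is trivial whenever one of $A,B$ equals $\emptyset$ or $\{0,1\}$, and reduces to $p_1 \ge p_1^2$ when $A = B = \{1\}$, which holds since $p_1 \in [0,1]$.

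For the inductive step, I would decompose $\Omega = \Omega' \times \{0,1\}$ with $\Omega' = \prod_{i=1}^{n-1} \Omega_i$ and $p = p_n$, and slice each up-set $A$ into $A_j = \{\omega' \in \Omega' : (\omega', j) \in A\}$ for $j \in \{0,1\}$, and similarly $B_0, B_1$. Because $A$ and $B$ are up-sets, the slices $A_0, A_1, B_0, B_1$ are themselves up-sets in $\Omega'$, and monotonicity in the last coordinate gives $A_0 \subseteq A_1$ and $B_0 \subseteq B_1$. Conditioning on the last coordinate yields $P(A) = (1-p)P(A_0) + p\,P(A_1)$, an analogous identity for $P(B)$, and $P(A \cap B) = (1-p)P(A_0 \cap B_0) + p\,P(A_1 \cap B_1)$. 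Applying the induction hypothesis inside $\Omega'$ to the pairs $(A_0, B_0)$ and $(A_1, B_1)$ and rearranging gives
\[
  P(A \cap B) - P(A)P(B) \ge p(1-p)\bigl(P(A_1) - P(A_0)\bigr)\bigl(P(B_1) - P(B_0)\bigr).
\]
Both factors on the right are nonnegative because $A_0 \subseteq A_1$ and $B_0 \subseteq B_1$, which completes part (a). The case in which both sets are down-sets is symmetric, or follows by replacing each $\omega_i$ with $1-\omega_i$ (switching $p_i$ and $1-p_i$), which turns down-sets into up-sets without changing any of the joint probabilities.

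For part (b), I would use complementation: if $B$ is a down-set then $B^c$ is an up-set, so part (a) applied to $A$ and $B^c$ gives $P(A \cap B^c) \ge P(A)P(B^c)$. Writing $P(A \cap B^c) = P(A) - P(A \cap B)$ and $P(B^c) = 1 - P(B)$ and rearranging yields $P(A \cap B) \le P(A)P(B)$, as required. The only substantive step is the algebraic identity in the inductive argument, which crucially exploits the nesting $A_0 \subseteq A_1$, $B_0 \subseteq B_1$; I do not anticipate any genuine obstacle beyond this bookkeeping.
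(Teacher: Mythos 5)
Your proof is correct: the base case, the slicing identity on the last coordinate, the use of the nesting $A_0 \subseteq A_1$, $B_0 \subseteq B_1$ together with the induction hypothesis, and the complementation step for part (b) all check out. The paper does not prove this statement at all --- it is quoted as a known result with a citation to Bollob\'as--Riordan --- and your argument is precisely the standard coordinate-wise induction found in that reference, so there is nothing to reconcile.
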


\subsection{The standard coupling and the critical window}\label{section:criticalwindow}

It will sometimes be useful to work with the following standard device which gives a convenient coupling of the random hypergraphs $H(n, \pi)$ for all $\pi \in [0,1]$ as well as the random hypergraph process.

\begin{definition}[Standard coupling]\label{def:mastercoupling}
	For every $h \in {V \choose r}$, let $U_h $ be an independent random variable, uniform from $[0,1]$. Let
	\[
	H_\pi = (V, \{h: U_h \le \pi\}).
	\]
	Then $H_\pi \sim H(n,\pi)$. Furthermore, almost surely all values $U_h$, $h \in {V \choose r}$, are distinct. If they are distinct, they give an instance of the random hypergraph process $(H_t)_{t=0}^{M}$ in a natural way, as we can add the hyperedges in ascending order of $U_h$.
\end{definition}

For much of the proof, we will operate within the following \emph{critical window}: Define $\pi_-$ and $\pi_+$ by setting
\begin{equation} \label{eq:defcriticalwindow}
\pi_\pm =\frac{\log n \pm g(n)}{{n-1 \choose r-1}},
\end{equation}
where $g(n)$ is the function which was fixed globally in \eqref{eq:defg}, and let 
\begin{equation} \label{eq:defcriticalwindow_p}
    p_\pm= \big(\pi_\pm/(1-n^{-\delta})\big)^{1/\binom{r}{2}},
\end{equation}
where $\delta$ is the constant from Theorem~\ref{theorem:riordancoupling}. Note that for $n$ large enough we have $p_+ \le n^{-2/r+\epsilon}$, where $\epsilon$ is the constant from Theorem~\ref{theorem:riordancoupling}, so we may apply Theorem~\ref{theorem:riordancoupling} with $p=p_+$ and $\pi=\pi_+$ later.

It is a well-known fact that $(\pi_-, \pi_+)$ is the `critical window' for the disappearance of the last isolated vertex in a random $r$-uniform hypergraph  (see \cite[Lemma 5.1(a)]{devlin2017}), and so by Theorem~\ref{theorem:kahnhitting} for the appearance of a perfect matching: Whp, $H(n, \pi_-)$ does not contain a perfect matching, while $H(n, \pi_+)$ contains a perfect matching whp. Similarly, $(p_-, p_+)$ is the critical window for the disappearance of the last vertex not covered by an $r$-clique in a random graph (see \cite[Theorem 3.22]{janson2000random}). So if we couple as in Definition~\ref{def:mastercoupling}, then whp we have
\begin{align}
G_{p_-} \subset G_\HitG \subset G_{p_+}\quad \text{ and } \quad H_{\pi_-} \subset H_\HitH \subset H_{\pi_+}. \label{eq:mastercoupling}
\end{align}

\subsection{Bad events}

In this section we state certain undesirable properties, and show that $H \sim H(n, \pi_+)$ does not have them whp. We first need some terminology. For an $r$-uniform hypergraph $H$, let
\[
n(H)= (r-1)e(H)+c(H)-v(H)
\]
be the \emph{nullity} of $H$, where $e(H)$, $c(H)$, $v(H)$ denote the number of hyperedges, components and vertices of $H$, respectively. 
\begin{definition}[{\cite[Def.~11]{riordan2022random}}]\label{def:avoidable}
We call a connected $r$-uniform hypergraph $H$ on at most $2^{r+1}$ hyperedges with $n(H)\ge 2$ an \emph{avoidable configuration}.
	\end{definition}
\begin{definition}We say that a vertex $v$ in a hypergraph on $n$ vertices is a \emph{low-degree vertex} if $d(v)\le 7g(n)$, where $g(n)$ is the function that we fixed globally in \eqref{eq:defg}.
 \end{definition}
\begin{definition}We say that two hyperedges in a hypergraph are \emph{partner hyperedges} if they share exactly two vertices. \end{definition}

Now we are ready to state the \emph{bad events}. The first two are essentially the bad events from \cite{riordan2022random}. Note that the event $ \Bone = \Bone(\pi)$ depends on the choice of $\pi$.

\begin{itemize}
\item[] $\Bone$: \quad There is a vertex of degree more than $\binom{n-1}{r-1}\pi +\max(\binom{n-1}{r-1}\pi,3\log n)$.
\item[]$\Btwo$:  \quad $H$ contains an {avoidable configuration}.
\item[]$\Bthree$: \quad There are more than $(\log n)^{8 g(n) }$  low-degree vertices.
\item[]$\Bfour$: \quad There are more than $\log^3 n$ {pairs of partner hyperedges}.
\item[]$\Bfive$: \quad There is an isolated vertex.
\end{itemize}
Let $\mathcal{B} = \Bone(\pi_+) \cup \Btwo \cup \Bthree \cup \Bfour \cup \Bfive $. 
	
\begin{lemma} \label{lemma:Hnotbad}Let $H \sim H(n, \pi)$ with $\pi \le n^{1-r+o(1)}$ ,
then whp $H \notin  \Bone(\pi) \cup \Btwo$. If further $\pi=\pi_+$, then whp $H \notin \mathcal{B}$.
\end{lemma}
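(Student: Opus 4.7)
The plan is to dispatch the five bad events one by one, in each case producing a suitable first-moment bound and then concluding via either Markov's inequality or a union bound over vertices; only the analysis of $\Bthree$ requires more than routine estimates.

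For $\Bone$, the degree of any fixed vertex is $\Bin\bigl(\binom{n-1}{r-1},\pi\bigr)$ with mean $\mu=\binom{n-1}{r-1}\pi$. Applying Theorem~\ref{theorem:Chernoff} with deviation $t=\max(\mu,3\log n)$ and treating the cases $\mu\ge 3\log n$ and $\mu<3\log n$ separately, one quickly obtains $P(d(v)\ge \mu+t)\le n^{-9/8}$ in both regimes, and a union bound over the $n$ vertices closes this case without using any further assumption on $\pi$. For $\Btwo$, the hypothesis $\pi\le n^{1-r+o(1)}$ enters decisively: since an avoidable configuration $C$ is by definition connected with $e(C)\le 2^{r+1}$ and $n(C)\ge 2$, we have $v(C)\le (r-1)e(C)-1$, and so the expected number of copies of a given isomorphism type is at most $n^{v(C)}\pi^{e(C)}\le n^{-1}(n^{r-1}\pi)^{e(C)}=n^{-1+o(1)}$. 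As the number of isomorphism types depends only on $r$, summing and applying Markov yields $P(\Btwo)=o(1)$.

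For the remaining three events we now exploit $\pi=\pi_+$, so that $\binom{n-1}{r-1}\pi_+=\log n+g(n)$. For $\Bfive$, the expected number of isolated vertices is bounded by $n(1-\pi_+)^{\binom{n-1}{r-1}}\le e^{-g(n)}=o(1)$. For $\Bfour$, a direct count of unordered pairs of $r$-subsets of $[n]$ intersecting in exactly two vertices, weighted by $\pi_+^2$, gives an expected number of partner-pairs of order $\Theta(\log^2 n)$, so Markov with threshold $\log^3 n$ suffices.

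The main obstacle is $\Bthree$: the Chernoff lower tail yields only $P(d(v)\le 7g(n))\le n^{-1/2}$, which is nowhere near enough for a union bound. The fix is to exploit that $7g(n)\ll \mu$, so that the Binomial lower tail is dominated, up to a factor $O(g(n))$, by its largest term, which Stirling estimates as
\[
P\bigl(d(v)\le 7g(n)\bigr)\le O(g(n))\cdot \frac{e^{-g(n)}}{n}\Bigl(\frac{e\mu}{7g(n)}\Bigr)^{7g(n)}.
\]
Multiplying by $n$, the expected number of low-degree vertices is at most $(C\log n/g(n))^{7g(n)}$ for some constant $C$, which is $o\bigl((\log n)^{8g(n)}\bigr)$ since $g(n)\to\infty$; the growth restriction $g(n)=o(\log n/\log\log n)$ guarantees that $(\log n)^{O(g(n))}$ stays sub-polynomial, and so Markov's inequality with threshold $(\log n)^{8g(n)}$ gives $P(\Bthree)=o(1)$, completing the proof.
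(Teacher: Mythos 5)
Your proof is correct and follows essentially the same route as the paper's: Chernoff plus a union bound for $\Bone$, first-moment bounds plus Markov for $\Btwo$, $\Bthree$, $\Bfour$, and a first-moment bound for $\Bfive$. The only differences are that you make self-contained two pieces the paper delegates to citations (the avoidable-configuration count, which the paper takes from Riordan's Lemma~12, and the isolated-vertex bound, which it takes from the critical-window fact), and you correctly flag that for $\Bthree$ the crude exponential form of the Chernoff bound is insufficient and the sharper ratio form (or a direct binomial-tail computation) is needed to get the expected count below $(\log n)^{8g(n)}$ --- a detail the paper's one-line proof glosses over.
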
 
\begin{proof}
$\mc B_1$ and $\mc{B}_3$ follow from the Chernoff bounds (Theorem ~\ref{theorem:Chernoff}), noting that the expected degree of a given vertex is of order $n^{r-1}\pi$, and the union bound and Markov's inequality, respectively. For $\Bfour$, note that the expected number of pairs of partner hyperedges in $H(n, \pi_+)$ is of order $\log^2 n$, and apply Markov's inequality. It was shown in \cite[Lemma 12]{riordan2022random} that whp $\mc B_2$ does not hold. Finally, $\mc B_5$ was mentioned previously in \S\ref{section:criticalwindow}, noting that $\pi_+$ is the end of the `critical window' for the disappearance of the last isolated vertex.
\end{proof}

\begin{remark}\label{remark:vertexdisjoint}Note that two hyperedges overlapping in three or more vertices comprise an avoidable configuration. Furthermore, a hyperedge with more than one partner hyperedge is an avoidable configuration. Moreover, two pairs of partner hyperedges that share a vertex are an avoidable configuration. Therefore, if $H \notin \mc{B}$, no two hyperedges share more than two vertices, each hyperedge has at most one partner hyperedge, and all pairs of partner hyperedges are vertex-disjoint. 
\end{remark}

We will also need the following technical lemma, which is proved in the appendix.
\begin{lemma}\label{lemma:lowdegreenotbad}
    Let $K$ be a fixed hypergraph such that $E[X_K]\le n^{o(1)}$, where $X_K$ is the number of  copies of $K$ in $H = H(n, \pi_+)$. Then whp no low-degree vertex of $H$ is contained in a copy of $K$. 
\end{lemma}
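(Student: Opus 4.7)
The plan is a first moment computation. Let
\[
Y \;=\; \#\bigl\{(v, K') : v \in V(K'),\ K' \setle H \text{ is a copy of } K,\ d_H(v) \le 7g(n)\bigr\};
\]
then the event that some low-degree vertex of $H$ lies in a copy of $K$ is exactly $\{Y \ge 1\}$, and by Markov's inequality it suffices to show $\expe[Y] \to 0$.

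Fix a vertex $v \in [n]$ and a copy $K'$ of $K$ in $\binom{[n]}{r}$ with $v \in V(K')$, and write $d_{K'}(v) \le e(K) = O(1)$ for the number of hyperedges of $K'$ incident to $v$. Since the $\binom{n-1}{r-1} - d_{K'}(v)$ potential hyperedges through $v$ that do not lie in $K'$ are independent of the event $\{K' \setle H\}$, conditional on $K' \setle H$ we have $d_H(v) = d_{K'}(v) + Z$, where $Z \sim \Bin\bigl(\tbinom{n-1}{r-1} - d_{K'}(v),\ \pi_+\bigr)$ is independent of the conditioning. Stochastic monotonicity in the number of trials yields
\[
\prob\bigl(d_H(v) \le 7g(n) \mid K' \setle H\bigr) \;\le\; \prob\Bigl(\Bin\bigl(\tbinom{n-1}{r-1} - e(K),\ \pi_+\bigr) \le 7g(n)\Bigr).
\]
The binomial on the right-hand side has mean $(1 - o(1))(\log n + g(n))$, which dwarfs $7g(n) = o(\log n / \log \log n)$. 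The Chernoff lower-tail bound of Theorem~\ref{theorem:Chernoff}, applied with deviation $t = (1 - o(1))\log n$, therefore bounds this probability by $\exp\bigl(-(1/2 - o(1))\log n\bigr) = n^{-1/2 + o(1)}$.

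Since $\prob(K' \setle H) = \pi_+^{e(K)}$ and, by double counting over pairs $(v, K')$,
\[
\sum_{v \in [n]} \sum_{\substack{K' \text{ copy of } K \\ v \in V(K')}} \prob(K' \setle H) \;=\; v(K) \cdot \expe[X_K] \;\le\; v(K) \cdot n^{o(1)} \;=\; n^{o(1)},
\]
combining the two estimates gives
\[
\expe[Y] \;\le\; n^{o(1)} \cdot n^{-1/2 + o(1)} \;=\; n^{-1/2 + o(1)} \;\longrightarrow\; 0,
\]
as desired. The only substantive step is the Chernoff estimate: the lower tail at $7g(n)$ of a binomial with mean $\sim \log n$ has to beat the $n^{o(1)}$ factor from $\expe[X_K]$, and since the mean is a full $\log n$ this leaves a comfortable $n^{-1/2 + o(1)}$ margin. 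Everything else reduces to routine bookkeeping about embeddings of the fixed hypergraph $K$.
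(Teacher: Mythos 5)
Your proof is correct and follows essentially the same route as the paper's: a first moment bound over rooted copies $(v,K')$, combined with the observation that conditional on $K'\setle H$ the degree of $v$ is $d_{K'}(v)$ plus an independent binomial of mean $\Theta(\log n)$, whose lower tail at $7g(n)$ is $n^{-\Theta(1)}$ by Chernoff. The only cosmetic difference is that you carry out the Chernoff computation explicitly to get the exponent $-1/2+o(1)$, where the paper is content with $n^{-\Theta(1)}$.
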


\subsection{Proof overview}\label{section:proofoverview}
Define $p_+, \pi_+$ as in equations \eqref{eq:defcriticalwindow} and \eqref{eq:defcriticalwindow_p}. Our starting point is the coupling of $G \sim G(n,p_+)$ and $H \sim H(n, \pi_+)$ given by Theorem~\ref{theorem:riordancoupling}. We review this coupling in \S\ref{sec_rc_rcoupling}--\ref{section:r3algo}, and then analyse it more in-depth in \S\ref{section:extracliques}--\ref{section:lowdegreeextra}. As in \cite{heckel2020random,riordan2022random}, the case $r=3$ requires special treatment, and the reader may find it useful to focus on the case $r \ge 4$ at first and skip over the parts related to $r=3$ (\S\ref{section:r3algo} and \S\ref{section:extracliques3}). The goal of \S\ref{sec_rc_rcoupling} is to prove that whp $G$ does not have any `extra' $r$-cliques, meaning cliques which are not represented by hyperedges in $H$, that are incident with low-degree vertices. This will ensure later that in the construction of the coupled processes, the `extra' cliques in $G$ do not affect the hitting time $\HitG$.

In \S\ref{section:processcoupling1}, which is the heart of the proof, we take the coupled $G \sim G(n,p_+)$ and $H \sim H(n, \pi_+)$ and proceed by carefully coupling uniform orders of the edges of $G$ and hyperedges of $H$. Since $p_+$ and $\pi_+$ are the upper ends of the respective critical windows (see \S\ref{section:criticalwindow}), whp this couples (copies of) the stopped graph process $G_\HitG$ and the stopped hypergraph process $H_\HitH$. We will show that this coupling \emph{almost} does what we want: for all hyperedges $h \in H_\HitH$, except those in a small exceptional set $\mc{E}$, there is an $r$-clique in $G_\HitG$ on the same vertex set. Moreover, we show that whp all $h \in \mc{E}$ have a partner hyperedge which appears between time $\HitH$ and time $\HitH+\left\lfloor g(n)n \right\rfloor$. 

To prove Theorem \ref{theorem:finalcoupling}, we now only need to show that we can get rid of the hyperedges in $\mc{E}$ and still have an instance of the stopped random hypergraph process. To this end, in \S\ref{section:step2} we show that $\mc{E}$ can be whp embedded into a binomial random subset $\mc{R} \subset H_\HitH$ where each hyperedge $h \in H_\HitH$ is included independently with a small probability. In \S\ref{section:step3}, we show that if we remove the hyperedges in $\mc{R}$ from the hypergraph process up to time $\HitH$, whp this essentially does not change the hitting time $\HitH$, and in particular whp $H_\HitH \setminus \mc{R}$ is still an instance of the stopped random hypergraph process.

In \S\ref{section:puteverythingtogether}, we put all the different pieces together, following through the chain of couplings to construct the coupling in Theorem~\ref{theorem:finalcoupling}.

\section{Coupling of $G(n,p)$  and $H(n,\pi)$}\label{sec_rc_rcoupling}
We start this section in \S\ref{section:riordanalgo} with a brief review  of Riordan's coupling from Theorem~\ref{theorem:riordancoupling} for $r \ge 4$.  Our description and notation will largely follow that in \cite{riordan2022random}, adapted slightly to our needs. In \S\ref{section:r3algo}, we describe the modifications to Riordan's coupling for $r=3$ which were made by the first author in \cite{heckel2020random}, and establish some basic properties. In \S\ref{section:extracliques}---\S\ref{section:lowdegreeextra}, we proceed with a more in-depth analysis of the `extra' $r$-cliques in $G$ which are not represented by hyperedges in $H$. In particular, in Lemma \ref{lemma:extrabound} (for $r \ge 4$) and Lemma \ref{lemma:extrabound3} (for $r=3$) we give an upper bound for the probability that a given $r$-set of vertices which is not a hyperedge in $H$ spans an $r$-clique in $G$. Such a bound was not relevant for the construction of the one-sided coupling in \cite{riordan2022random}, but it will be crucial for the hitting time version of the coupling.
Recall that a vertex $v$ is called \emph{low-degree} if $d(v) \le 7g(n)$.
The following result is proven in \S\ref{section:lowdegreeextra}.
	\begin{lemma}\label{lemma:lowdegreenoextra}
	Couple $G \sim G(n, p_+)$ and $H\sim H(n, \pi_+)$ via the coupling described in \S\ref{section:riordanalgo}--\ref{section:r3algo}. We call the hyperedges in $\mathrm{cl}(G) \setminus H$ \emph{extra cliques}. Then whp, no low-degree vertex of $H$ is incident with any extra clique in $G$.
	\end{lemma}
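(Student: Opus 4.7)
The plan is to use a first-moment argument: show that the expected number of pairs $(v, S)$ with $v$ a low-degree vertex of $H$, $v \in S$, and $S$ an extra clique in $G$, is $o(1)$, and conclude by Markov's inequality.

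First, by Lemma~\ref{lemma:Hnotbad} I may condition on $H \notin \mathcal{B}$, which holds whp; Remark~\ref{remark:vertexdisjoint} then controls the hyperedge-structure of $H$ around any vertex. The key input is Lemma~\ref{lemma:extrabound} (for $r \ge 4$) or Lemma~\ref{lemma:extrabound3} (for $r = 3$), which supplies a bound $P(S \in \mathrm{cl}(G)\setminus H \mid H) \le \eta$, uniform over `good' $H$, of order $\eta = n^{-\delta + o(1)}\pi_+$. This reflects the $n^{-\delta}$ slack between $p_+^{\binom{r}{2}}$ and $\pi_+$ built into the Riordan coupling from Theorem~\ref{theorem:riordancoupling}.

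Separately, for any fixed $v$ the degree $d_H(v) \sim \mathrm{Bin}\bigl(\binom{n-1}{r-1}, \pi_+\bigr)$ has mean $\log n + g(n)$, and since $7g(n) = o(\log n)$ a Poisson lower-tail estimate gives $P(v \text{ low-degree}) \le n^{-1+o(1)}$; here one uses $(\log n)^{7g(n)} = e^{O(g(n)\log\log n)} = n^{o(1)}$. Because `$v$ is low-degree' depends only on $H$, the law of total probability combined with the uniform conditional bound gives, for every $r$-set $S$ and every $v \in S$,
\[
P(v \text{ low-degree}, \, S \text{ extra}) \le \eta \cdot P(v \text{ low-degree}) \le \eta \cdot n^{-1+o(1)}.
\]
Summing over the $n\binom{n-1}{r-1}$ such pairs and using $\binom{n-1}{r-1}\pi_+ \lesssim \log n$:
\[
\mathbb{E}\bigl[\#\{(v,S) : v \text{ low-deg}, \, v \in S, \, S \text{ extra}\}\bigr] \le n\binom{n-1}{r-1}\eta \cdot n^{-1+o(1)} \le n^{-\delta+o(1)} = o(1),
\]
and Markov's inequality yields the claim.

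The main technical obstacle is establishing the uniform conditional bound in Lemma~\ref{lemma:extrabound}/\ref{lemma:extrabound3}, i.e.\ showing that $P(S \in \mathrm{cl}(G)\setminus H \mid H) \le \eta$ holds for all good $H$ and, crucially, is not worsened by the local low-degree condition at $v$. This requires a careful analysis of the Riordan coupling, isolating those edges of $G$ that do not come from hyperedges of $H$ as an essentially independent overlay of small size. The $r=3$ case is, as is familiar in this literature, the harder one --- which is why Lemma~\ref{lemma:extrabound3} is stated and proved separately from its $r \ge 4$ counterpart.
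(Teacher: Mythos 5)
Your overall first-moment framework is the same as the paper's, but the key quantitative input you assume is not available, and this is a genuine gap rather than a presentational one. You claim that Lemma~\ref{lemma:extrabound} (resp.\ Lemma~\ref{lemma:extrabound3}) gives a bound $P(S \in \cl(G)\setminus H \mid H=H_0) \le \eta$ with $\eta = n^{-\delta+o(1)}\pi_+$, \emph{uniformly} over all $r$-sets $S$ and all good $H_0$. It does not: the bound is $(\pi_j^*-\pi_+)/(1-\pi_+)$ where $\pi_j^* = P\big(A_j \mid \bigcap_{i:h_i\in H_0}A_i\big)$ depends on how many edges of the potential clique on $h_j$ are already forced to be present by the cliques of $H_0$. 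If the clique on $h_j$ shares edges with $G_0$ (the union of cliques of $H_0$), then $\pi_j^*$ can be as large as $p_+ = n^{-2/r+o(1)}$, which is enormously larger than $\pi_+ \approx \log n/n^{r-1}$; for such $S$ the difference $\pi_j^*-\pi_+$ is nowhere near $n^{-\delta}\pi_+$. Your sum over all $n\binom{n-1}{r-1}$ pairs $(v,S)$ with the uniform bound $\eta$ therefore does not go through. You flag exactly this issue as ``the main technical obstacle,'' but then proceed as though the clean uniform bound were established, which is where the argument breaks.

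The paper's proof handles this by splitting $\sum_{j: v\in h_j,\, h_j\notin H_0}(\pi_j^*-\pi_+)$ into the ``slack'' part $\sum (p_+^{\binom{r}{2}}-\pi_+)$ (which your $n^{-\delta}$ argument does control) and the ``overlap'' part $\sum(\pi_j^*-p_+^{\binom{r}{2}})$, and then proves Lemma~\ref{lemma:sumbound}: a combinatorial count, organised by the component structure $(c_1,\dots,c_t)$ of $h_j$ inside $G_0$, showing the overlap part is at most $n^{-s+o(1)}$ for a fixed vertex $v$. That count relies on the maximum-degree bound from $H_0\notin\mathcal{B}_1$ and on the exclusion of avoidable configurations ($\mathcal{B}_2$), via Lemma~\ref{lemma:extraavoidable}, to rule out fully-overlapping cliques. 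It also needs, for $r=3$, that no low-degree vertex lies in the middle triangle of a clean $3$-cycle (Lemma~\ref{lemma:lowdegreenotbad}), a point your proposal omits. Finally, note the paper does not need your per-vertex probability $P(v\text{ low-degree})\le n^{-1+o(1)}$ at all: it instead uses that whp there are only $n^{o(1)}$ low-degree vertices (event $\mathcal{B}_3$) and obtains a per-vertex expected count of extra cliques of $n^{-c}$, so the union bound closes. To repair your argument you would essentially have to prove Lemma~\ref{lemma:sumbound}, i.e.\ the structured overlap count; without it the claimed uniform $\eta$ is false.
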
 
	This lemma will be used later on to show that extra cliques in the random graph process do not influence the hitting time $\HitG$, as none of them are incident with any of the candidate vertices which are amongst the last to be covered by $r$-cliques.

As in \cite{heckel2020random,riordan2022random}, we will let 
\[p=p(n) \le n^{-2/r+o(1)}\]
in order to simplify calculations, although upon closer inspection the $o(1)$-term could be replaced with a small constant $\epsilon=\epsilon(r)$. We let 
\[\pi=(1-n^{-\delta})p^{r \choose 2},\]
where $\delta$ is the constant from  Theorem~\ref{theorem:riordancoupling}. Most of \S\ref{sec_rc_rcoupling} will apply to these general $p$ and $\pi$, except for the proof of Lemma~\ref{lemma:lowdegreenoextra} in \S\ref{section:lowdegreeextra} where we set $p=p_+$ and $\pi=\pi_+$.

As was the case in \cite{heckel2020random,riordan2022random}, the case $r=3$ requires a considerable amount of extra work due to complications arising from the presence of so-called `clean $3$-cycles'. It may therefore be helpful to focus on the case $r \ge 4$ initially, and skip over \S\ref{section:r3algo} and \S\ref{section:extracliques3} which relate to the case $r=3$.

\subsection{The coupling algorithm for {$r\ge 4$}} \label{section:riordanalgo}
Order the $M={n \choose r}$ potential hyperedges in some arbitrary way as $h_1, \dots, h_{M}$, and for $1\le j \le M$, let $A_j$ be the event that there is an $r$-clique in $G \sim G(n,p)$ on the vertex set of $h_j$. We construct the coupling of $G \sim G(n,p)$ and $H \sim H(n, \pi)$ step by step; in step $j$ revealing whether or not $h_j \in H$, as well as some information about $A_j$.

\noindent \textbf{Coupling algorithm:} For each $j$ from $1$ to $M$:
\begin{itemize}
	\item Calculate $\pi_j$, the conditional probability of $A_j$ given all the information revealed so far.
	\item If $\pi_j \ge \pi$, toss a coin which lands heads with probability $\pi/\pi_j$, independently of everything else. If the coin lands heads, then test whether $A_j$ holds (which it does with probability exactly $\pi_j$). Include the hyperedge $h_j$ in $H$ if and only if the coin lands heads and $A_j$ holds. (Note that the probability of including $h_j$ is exactly $\pi/\pi_j \cdot \pi_j = \pi$.)
	\item If $\pi_j < \pi$, then toss a coin which lands heads with probability $\pi$ (independently of everything else), and declare $h_j$ present in $H$ if and only if the coin lands heads. If this happens for any $j$, we say that the coupling has failed.
	\end{itemize}
After going through steps $j=1, \dots, M$, we have decided all hyperedges of $H$, and revealed information on the events $A_1, \dots, A_M$ of $G$. Now choose $G$ conditional on the revealed information on the events $A_j$. Clearly, this algorithm generates the correct distributions of $H(n, \pi)$ and $G(n, p)$, and in \cite{riordan2022random} it is shown that for an appropriate choice of constants $\epsilon(r), \delta(r)>0$, whp the algorithm does not fail. If it does fail, then 
\[H \in \mc{B}_1 \cup \mc{B}_2 \quad \big( \,\, =  \mc{B}_1(\pi) \cup \mc{B}_2 \,\,\big).\]

The algorithm, if it succeeds, generates the answers yes/no for each $h_j$, and it generates answers yes/no/$*$ for $A_j$, where $*$ means that we did not decide $A_j$. The latter happens either if $\pi_j \ge \pi$ and the coin lands tails, or if $\pi_j<\pi$. 

So for $0 \le j \le M$, let $\bY_j, \bN_j, \bS_j  \subset [j]$ be the sets of indices $i \le j$ where the algorithm gives the answer `yes', `no', and $*$, respectively, for the event $A_i$. Note that these sets are disjoint and their union is $[j]$.

In particular, before we decide the hyperedge $h_j$ and the event $A_j$, we have exactly the information $\bigcap_{i \in \bY_{j-1}}A_i \cap \bigcap_{i \in \bN_{j-1}}A_i^c$ on $G$, and so
\[
\pi_j = P\Big(A_j \mid \bigcap_{i \in \bY_{j-1}}A_i \cap \bigcap_{i \in \bN_{j-1}}A_i^c\Big).\]
Furthermore, note that if the coupling does not fail, we have
\[\bY_j=\{i \le j: h_i \in H\} \quad \text{ and } \quad \bY_M= \{i: h_i \in H\}.\]
Let
\[
\histj = (\bY_j, \bN_j, \bS_j),\]
which encodes the relevant history of the algorithm up to and including time $j$.

\subsection{Modified coupling algorithm for {$r=3$}}\label{section:r3algo}

Riordan's proof does not extend to the case $r=3$ because of a single problematic hyperedge configuration:  clean $3$-cycles - which have nullity $1$ and therefore do not constitute an avoidable configuration.

\begin{definition}\label{def:clean3cycles}A \emph{clean $\mathit{3}$-cycle} is a set of three $3$-uniform hyperedges where each pair meets in exactly one distinct vertex. In a slight abuse of notation, we will also call an edge configuration in a graph where each such hyperedge is replaced by a triangle a clean $3$-cycle. In such an edge configuration, we call the copy of $K_3$ spanned by the three vertices where the pairs of hyperedges meet the \emph{middle triangle} of the clean $3$-cycle.
\end{definition}

The key observation in the modified coupling from \cite{heckel2020random} is that there are very few clean $3$-cycles in both $H$ and $G$, and that their distributions are essentially the same, which is Lemma~\ref{lemma:dtv} below. So it is possible to choose and match up the clean $3$-cycles in $H$ and $G$ first, and then run Riordan's coupling algorithm conditional on this choice of clean $3$-cycles (so that no further clean $3$-cycles can appear during the execution of the algorithm, which circumvents the specific problem in Riordan's proof for the case $r=3$).

\begin{lemma}[{\cite[Le.~4]{heckel2020random}}]\label{lemma:dtv}
For $p \le n^{-2/3+o(1)}$ and $\pi=(1-n^{-\delta})p^3$ as before, let $\mathcal{C}_G$ and $\mathcal{C}_H$ be the collections of clean $3$-cycles in a random graph $G \sim G(n,p)$ and in a random hypergraph $H \sim H_3(n, \pi)$, respectively. Then we can couple $\mathcal{C}_G$ and $\mathcal{C}_H$ so that, whp, $\mathcal{C}_G=\mathcal{C}_H$.
\end{lemma}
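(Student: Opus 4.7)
The plan is to couple $\mathcal{C}_G$ and $\mathcal{C}_H$ via an intermediate approximation by independent Bernoulli processes indexed by configurations. Let $\mathcal{F}$ denote the set of all clean $3$-cycle configurations on $V=[n]$, i.e.\ unordered triples of $3$-sets with the prescribed intersection pattern, so that $|\mathcal{F}|=\Theta(n^6)$ (three "middle" vertices, three "outer" vertices, and a matching between them). For $C\in\mathcal{F}$ let $I^G_C$ and $I^H_C$ be the indicators that $C\in \mathcal{C}_G$ and $C\in \mathcal{C}_H$; then $\expe[I^G_C]=p^9$ and $\expe[I^H_C]=\pi^3=p^9(1-n^{-\delta})^3$.

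First, I would apply the Chen--Stein method (Arratia--Goldstein--Gordon, in its process form) separately to the families $(I^G_C)_{C\in\mathcal{F}}$ and $(I^H_C)_{C\in\mathcal{F}}$. The natural neighbourhood of dependence for $C$ is the collection of configurations sharing at least one edge (for $G$), resp.\ one hyperedge (for $H$), with $C$; each such neighbourhood has size $O(n^4)$, since fixing one shared edge or hyperedge pins down two of the six vertices of the other configuration. A routine moment calculation then bounds the Chen--Stein error terms by
\[
b_1 \lesssim |\mathcal{F}|\cdot n^4\cdot p^{18}=n^{10}p^{18} \quad\text{and}\quad b_2 \lesssim \sum_{k\geqslant 1} n^{12-s_k}\cdot p^{18-k},
\]
where $s_k\geqslant 2$ is the minimum number of shared vertices when two configurations share $k$ edges. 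For $p\le n^{-2/3+o(1)}$ every such contribution is $o(1)$ (e.g.\ the $k=1$ term is $n^{10}p^{17}=n^{-4/3+o(1)}$), so each of $(I^G_C)_{C}$ and $(I^H_C)_{C}$ is within total variation distance $o(1)$ of a family of independent Bernoullis $(J^G_C)_{C}$ and $(J^H_C)_{C}$ with the same marginals $p^9$ and $\pi^3$, respectively.

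Second, the two independent Bernoulli families can be coupled monotonically coordinate-by-coordinate so that $J^H_C\le J^G_C$ for every $C$. The expected Hamming distance then equals $|\mathcal{F}|(p^9-\pi^3)\lesssim 3n^{-\delta}\cdot n^6 p^9$, which is $O(n^{-\delta}\log^3 n)=o(1)$ throughout the regime $p\le n^{-2/3+o(1)}$. Hence by Markov's inequality $J^G=J^H$ whp, and the induced sets of configurations coincide.

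Composing the three couplings by the triangle inequality for total variation yields a joint distribution of $\mathcal{C}_G$ and $\mathcal{C}_H$ under which they are equal whp, and this joint law may be extended to one of $G\sim G(n,p)$ and $H\sim H_3(n,\pi)$ by sampling the remaining randomness of each marginal conditionally. The main obstacle is the case-by-case verification of the Chen--Stein bounds for each overlap size $k\geqslant 1$; the hypothesis $p\le n^{-2/3+o(1)}$ is essentially tight for these bounds, while the deliberate factor $(1-n^{-\delta})$ built into $\pi$ leaves exactly the right amount of slack between $p^9$ and $\pi^3$ for the monotone coupling step to absorb only $o(1)$ of mass.
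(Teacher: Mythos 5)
This lemma is not proved in the present paper: it is imported verbatim from \cite{heckel2020random} (Lemma~4 there), where it is established by a direct comparison of the two laws of the collection of clean $3$-cycles, so there is no in-paper proof to match your argument against. Your Chen--Stein route is a legitimate alternative way to reach the same estimate $d_{\mathrm{TV}}(\mathcal L(\mathcal C_G),\mathcal L(\mathcal C_H))=o(1)$, and its skeleton is sound: the neighbourhoods of dependence are correctly identified (indicators of edge-disjoint, resp.\ hyperedge-disjoint, configurations are genuinely independent, so the $b_3$ term vanishes), the $b_1$ bounds $n^{10}p^{18}=n^{-2+o(1)}$ and $n^{9}\pi^{6}=n^{-3+o(1)}$ are fine, and the final monotone coupling of the product measures costs $|\mathcal F|\,(p^9-\pi^3)\lesssim 3n^{-\delta}n^6p^9=n^{-\delta+o(1)}=o(1)$ for \emph{all} $p\le n^{-2/3+o(1)}$ (your $O(n^{-\delta}\log^3 n)$ is the value at the critical window only). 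Two points need to be made explicit. First, the Arratia--Goldstein--Gordon process theorem approximates by independent \emph{Poisson} coordinates, not Bernoullis; passing to Bernoullis costs an extra $\sum_C(\expe I^G_C)^2=n^6p^{18}=o(1)$ in total variation, which is harmless but should be stated. Second --- and this is the only place the proof could actually fail --- the $b_2$ bound requires $s_k>2k/3$ for every $1\le k\le 8$, not merely $s_k\ge 2$ as written; equivalently, the clean-$3$-cycle graph $K$ ($6$ vertices, $9$ edges, density $3/2$) must be strictly balanced. This is true and quickly checked: the three middle vertices have degree $4$ and the three outer vertices degree $2$, so the densest proper induced subgraph is obtained by deleting one outer vertex ($5$ vertices, $7$ edges, density $7/5<3/2$), while any subgraph on all $6$ vertices with $k\le 8$ edges has density at most $4/3$; hence every proper subgraph with $k\ge 1$ edges on $s$ vertices satisfies $s>2k/3$, and each $b_2$ term is $n^{2k/3-s_k+o(1)}\to 0$. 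With that verification inserted (and the analogous, easier check for $H$, where a shared hyperedge fixes three vertices, so the neighbourhood there is in fact only $O(n^3)$), your argument is complete.
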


\noindent \textbf{Modified coupling algorithm:} 
\begin{itemize}
\item Choose the collections $\mathcal{C}_G$ and $\mathcal{C}_H$ of clean $3$-cycles in $G$ and in $H$, coupling as in Lemma \ref{lemma:dtv} so that whp
\[
\mathcal{C}_G =\mathcal{C}_H.
\] If $\mathcal{C}_G \neq \mathcal{C}_H$, we say that the coupling has failed. 
\item[] Let $\mc{U}_G$ and $\mc{U}_H$ be the events (which are up-sets) that $G$ and $H$ contain the edges and hyperedges contained in $\mc{C}_G$ and $\mc{C}_H$, respectively. Let $\mc{D}_G$ and $\mc{D}_H$ be the events (which are down-sets) that $G$ and $H$ contain no other clean $3$-cycles.

Now as before, we order the $M'={n \choose 3}-|\{h\in c:c\in\mc{C}_H\}|$ remaining potential hyperedges in some arbitrary way as $h_1, \dots, h_{M'}$, and for $1\le j \le M'$ we let $A_j$ be the event that there is a triangle in $G \sim G(n,p)$ on the vertex set of $h_j$. 

\item[] Now for each $j$ from $1$ to $M'$:
	\item Calculate $\pi_j$, the conditional probability of $A_j$ given $\mathcal{U}_G$, $\mathcal{D}_G$ and all the information revealed on $G$ so far.
	\item Calculate $\pi_j'$, the conditional probability of the hyperedge $h_j$ being present in $H$, given $\mathcal{U}_H$, $\mathcal{D}_H$ and all the information revealed on $H$ so far.
	\item Case 1: $\pi_j \ge \pi'_j$. If $\pi_j=\pi'_j=0$, then we do not include $h_j$ in $H$, and we decide that $A_j$ does not hold. Otherwise we have $\pi_j>0$, and in that case we toss a coin which lands heads with probability $\pi_j'/\pi_j$, independently of everything else. If the coin lands heads, then test whether $A_j$ holds (which it does with probability exactly $\pi_j$). Include the hyperedge $h_j$ in $H$ if and only if the coin lands heads and $A_j$ holds. (Note that the probability of including $h_j$ is exactly $\pi_j'/\pi_j \cdot \pi_j = \pi_j'$.)
	\item Case 2: $\pi_j < \pi_j'$. Toss a coin which lands heads with probability $\pi_j'$ (independently of everything else), and declare $h_j$ present in $H$ if and only if the coin lands heads. If this happens for any $j$, we say that the coupling has failed.
	\end{itemize}
	After going through the steps $j=1, \dots, M'$, we have decided all hyperedges of $H$, and revealed information on the events $A_1, \dots, A_{M'}$ of $G$. Now choose $G$ conditional on the revealed information on the events $A_j$ and on $\mathcal{C}_G$ and $\mathcal{D}_G$. Clearly this algorithm generates the correct distributions $H \sim H_3(n, \pi)$ and $G \sim G(n, p)$, and in \cite{heckel2020random} it is shown that for an appropriate choice of constants $\epsilon, \delta>0$, whp the algorithm does not fail. If it does fail, then 
	\[\mc{C}_G \neq \mc{C}_H \text{ or } H \in \mc{B}_1 \cup \mc{B}_2.\]

Similarly to \S\ref{section:riordanalgo}, we let
\[
\histj = (\bY_j, \bN_j, \bS_j)\]
encode the history of the algorithm up to and including time $j$, where $\bY_j$ and $\bN_j$ are the sets of all $i \in [j]$ where we have received the answers `yes' and `no' for the event $A_i$, respectively, and $\bS_j=[j] \setminus (\bY_j \cup \bN_j)$. Then, when deciding $h_j$ and $A_j$, the information we have on $H$ and $G$ is given by $\mathbf{Hist}_{j-1}$ and by $\mc{C}_G$, $\mc{C}_H$, with the corresponding events
\[\mc{U}_G, \mc{U}_H, \mc{D}_G, \mc{D}_H\]
that the clean $3$-cycles in $\mc{C}_G$ and $\mc{C}_H$ are present, and no others. So
\begin{equation}
\pi_j = P \Big(A_j \Big|\, \mc{U}_G \cap \mc{D}_G \cap \bigcap_{i \in \bY_{j-1}} A_i \cap \bigcap_{i \in \bN_{j-1}} A_i^c \Big) \label{eq:defpij}
\end{equation}
and, letting $B_j = \{h_j \in H\}$, unless the coupling failed at an earlier step,
\begin{equation}
\pi_j' = P \Big(B_j \Big|\, \mc{U}_H \cap \mc{D}_H \cap \bigcap_{i \in \bY_{j-1} } B_i \cap \bigcap_{i \in \bN_{j-1} \cup \bS_{j-1}} B_i^c \Big). \label{eq:defpijprime}
\end{equation}
We make a number of  observations in the following lemma which is proved in the appendix.
\begin{lemma}\label{lemma:boundforpijprime}
 Suppose that we are at time $j$ in the coupling algorithm for $\pi=(1-n^{-\delta})p^{3}$, before making decisions on $h_j \in H$ and $A_j$, and that $\tilde H \notin \mc B_1\cup\mc B_2$, where $\tilde H$ is the hypergraph containing exactly the hyperedges $\{h_i : i \in \bY_{j-1}\}$ and those from the clean $3$-cycles in $\mc{C}_H$. Then all of the following hold. 
 \begin{itemize}
      \item[a)] If $\pi_j'=0$, then $\pi_j=0$.
    \item[b)] $\pi_j' \le \pi$.
        \item[c)] Suppose that $\pi_j > \pi_j'$, then
    \[
\pi_j' = \pi  (1+n^{-1+o(1)}).
\]

 \item[d)] If $\pi_j < \pi_j'$, then $\pi_j=0$.
    \end{itemize}
\end{lemma}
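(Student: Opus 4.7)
I begin by reducing both sides to a product-space setting: after conditioning on $\mc{U}_H$ and $\{B_i\}_{i<j}$ on the $H$-side (respectively on $\mc{U}_G$ and $\{A_i,A_i^c\}_{i<j}$ on the $G$-side), the residual randomness lives on a true Bernoulli$(\pi)$-product over the untouched potential hyperedges (respectively Bernoulli$(p)$ over untouched edges). On this residual space $B_j$ and $A_j$ are up-sets, while $\mc{D}_H$ and $\mc{D}_G$ are down-sets. All four claims will be read off from this structure, together with the coupling identity $\mc{C}_H = \mc{C}_G$ (which we may assume, since otherwise the algorithm has already failed).

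For (a), the only conditioning that can force $\pi_j' = 0$ is $\mc{D}_H$, requiring that $h_j$ together with two hyperedges of $\tilde H$ forms a new clean $3$-cycle outside $\mc{C}_H$. On the $G$-side, the corresponding two triangles are already present (via $\mc{U}_G$ or the relevant $A_i$'s), so by $\mc{C}_G = \mc{C}_H$ the inclusion of $A_j$ would create a forbidden clean $3$-cycle of triangles outside $\mc{C}_G$, violating $\mc{D}_G$. Hence $\pi_j = 0$. For (b), Harris' inequality (Theorem~\ref{theorem:harris}) applied on the Bernoulli$(\pi)$-product to the up-set $B_j$ and the down-set $\mc{D}_H$ gives $P(B_j \cap \mc{D}_H) \le P(B_j)\,P(\mc{D}_H)$, i.e., $\pi_j' \le \pi$.

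For (c), I write
\[
\frac{\pi_j'}{\pi} \;=\; \frac{P(\mc{D}_H \mid B_j,\,\mathrm{fixed})}{P(\mc{D}_H \mid \mathrm{fixed})} \;=\; \frac{1 - P(\mc{D}_H^c \mid B_j)}{1 - P(\mc{D}_H^c)},
\]
so the task reduces to bounding the expected number of \emph{new} clean $3$-cycles through $h_j$. I split by how many of the two other hyperedges of such a cycle lie in $\tilde H$: two $\tilde H$-hyperedges is excluded by (a) in the regime $\pi_j' > 0$; one fixed and one random contributes at most $O(\log n) \cdot O(n) \cdot \pi = n^{-1+o(1)}$ via the degree bound from $\tilde H \notin \mc B_1$; two random contributes $O(n^3 \pi^2) = n^{-1+o(1)}$ by a first-moment estimate. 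Combined with (b), this yields the two-sided bound in (c).

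For (d), I argue the contrapositive: $\pi_j > 0$ implies $\pi_j \ge \pi_j'$. When $\pi_j > 0$, no clean $3$-cycle obstruction can apply to $A_j$, so by $\mc{C}_G = \mc{C}_H$ the obstruction of (a) is absent on the $H$-side as well. A local analysis around $h_j$, using $\tilde H \notin \mc B_1 \cup \mc B_2$ to control the depressive effect of $A_i^c$-conditioning and $\mc{D}_G$, gives $\pi_j \ge (1-o(1))\,p^3 > \pi \ge \pi_j'$, the last inequality from (b). The hardest step is the sharp lower bound in (c): ruling out the ``already closed'' contribution to the difference is exactly where $\tilde H \notin \mc B_2$ is essential, and once that bookkeeping is in place the remaining terms reduce to routine first-moment estimates in the Bernoulli$(\pi)$-product model.
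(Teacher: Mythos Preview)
Your plan is correct and follows the paper's proof essentially step for step: part (a) via the transfer of the $3$-cycle obstruction from $\mc{D}_H$ to $\mc{D}_G$ using $\mc{C}_H=\mc{C}_G$; (b) via Harris on the residual Bernoulli$(\pi)$ product; (c) via the Bayes ratio and the first-moment split by the number of companion hyperedges already in $\tilde H$; and (d) via the contrapositive combined with the bound $\pi_j\ge p^3(1-Q)$ from \cite{riordan2022random,heckel2020random}. The one step you elide in (c) is that your displayed ratio does not directly reduce to ``cycles through $h_j$'' since $P(\mc{D}_H^c\mid\text{fixed})$ need not itself be small --- the paper makes this precise by writing $\mc{D}_H=\mc{D}_{H,0}\cap\mc{D}_{H,1}$ (forbidden cycles avoiding vs.\ containing $h_j$) and cancelling the $\mc{D}_{H,0}$ factors, which are genuinely independent of $B_j$.
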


\subsection{Extra cliques, $r \ge 4$} \label{section:extracliques}
We say that an $r$-clique of $G$ is an \emph{extra clique} if there is no hyperedge in $H$ on the same vertex set. So the extra cliques are exactly the elements of $\cl(G)\setminus H$. In the following lemma, we will bound the probability that the clique corresponding to a given $h_j$ becomes an extra clique in $G$ for $r \ge 4$. The case $r=3$ will be treated separately in \S\ref{section:extracliques3}.

\begin{lemma}\label{lemma:extrabound}
Suppose that $r \ge 4$, let $\pi=(1-n^{-\delta})p^{\binom{r}{2}}$, $j \in [M]$, and let $H_0$ be a hypergraph so that $h_j \notin H_0$ and $H_0 \notin \mathcal{B}_1 \cup \mathcal{B}_2$. 
Set
	\[
\pi_j^* = P\big(A_j \mid \bigcap_{i: h_i \in H_0} A_i\big).
\]
Then, if the coupling in Theorem~\ref{theorem:riordancoupling} produces the hypergraph $H=H_0$, we can bound the probability that the clique on the vertex set of $h_j$ is  an extra clique in $G$ by
\[
P\big(A_j \mid H=H_0 \big) \le \frac{\pi_j^*-\pi}{1-\pi}.
\]
		\end{lemma}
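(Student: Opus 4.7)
The plan is to exploit the flexibility of Riordan's coupling algorithm of \S\ref{section:riordanalgo}, which is valid for any fixed ordering of the $M$ potential hyperedges. Since $\pi_j^*$ is a functional of $H_0$ alone and is invariant under reindexing, I would assume without loss of generality that $h_j$ is processed \emph{last}, at step $M$. With this reindexing, the full history $\hist_{M-1}$ is determined before any decision on $h_j$ is taken, and the event $\{H=H_0\}$ decomposes as $\{\hist_{M-1}$ is consistent with $H_0|_{\neq j}\} \cap \{h_j \notin H\}$.

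Next, I would carry out a direct case analysis at step $M$. Writing $\pi_j^{**}:=P\bigl(A_j \,\big|\, \bigcap_{i \in \bY_{M-1}} A_i \cap \bigcap_{i \in \bN_{M-1}} A_i^c\bigr)$ for the conditional probability the algorithm computes there, the two branches that yield $h_j\notin H$ (coin tails; or coin heads with $A_j^c$) give $P(A_j, h_j \notin H \mid \hist_{M-1}) = (1 - \pi/\pi_j^{**})\pi_j^{**} = \pi_j^{**} - \pi$ and $P(h_j \notin H \mid \hist_{M-1}) = 1 - \pi$, so
\[
P\bigl(A_j \,\big|\, H = H_0, \hist_{M-1}\bigr) \;=\; \frac{\pi_j^{**} - \pi}{1 - \pi}.
\]

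The key step will then be to bound $\pi_j^{**} \le \pi_j^*$ via Harris' inequality (Theorem \ref{theorem:harris}b). Conditioning on the up-set $\bigcap_{i \in \bY_{M-1}} A_i$ merely forces the edges of the cliques $\{V(h_i) : i \in \bY_{M-1}\}$ to be present, leaving the remaining edges of $G$ as an independent Bernoulli$(p)$ family. On this residual product measure, $A_j$ is an up-set while $\bigcap_{i \in \bN_{M-1}} A_i^c$ is a down-set, so Harris yields $\pi_j^{**} \le P\bigl(A_j \,\big|\, \bigcap_{i \in \bY_{M-1}} A_i\bigr)$; because $h_j \notin H_0$ forces $\bY_{M-1} = \{i : h_i \in H_0\}$, the right-hand side equals $\pi_j^*$. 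Averaging the resulting pointwise inequality $P(A_j \mid H=H_0, \hist_{M-1}) \le (\pi_j^* - \pi)/(1-\pi)$ over $\hist_{M-1}$ delivers the stated bound.

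The main obstacle is making the reindexing fully rigorous: one has to verify that Theorem \ref{theorem:riordancoupling}, and in particular the construction of $\pi_j$, does not depend on the chosen order of the hyperedges, so that placing $h_j$ last does not affect either the validity of the coupling or the values of $\pi$ and $\pi_j^*$ appearing in the statement.
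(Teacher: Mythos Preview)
Your core computation---the two-branch analysis at the final step giving $P(A_j \cap \{h_j\notin H\} \mid \hist_{M-1}) = \pi_j^{**}-\pi$, together with the Harris bound $\pi_j^{**}\le\pi_j^*$---is exactly the engine of the paper's proof. The problem is the reindexing, which is not a technicality to be ``verified'' but a genuine gap.

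The lemma is about a \emph{fixed} coupling, namely the one in \S\ref{section:riordanalgo} with a fixed ordering, and its application in \S\ref{section:lowdegreeextra} sums the bound over many $j$ simultaneously for that one coupling. The quantity $P(A_j\mid H=H_0)$ is \emph{not} invariant under reordering: the sets $\bN_M$, $\bS_M$ (and hence the conditioning used to generate $G$) have different distributions given $H=H_0$ depending on which step processes $h_j$. So by moving $h_j$ to position $M$ you are bounding the wrong quantity---you prove the inequality for a different coupling, and you cannot use a different coupling for each $j$.

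The paper avoids this by leaving $j$ where it is and conditioning on $\mathbf{Hist}_{j-1}$. Two extra ingredients are then needed. First, the step-$j$ calculation must be done conditional on the \emph{full} event $\{H=H_0\}$, not just on $\{h_j\notin H\}$; this works because the hyperedge decisions at steps $i>j$ are Bernoulli$(\pi)$ independently of $\mathbf{Hist}_j$, so conditioning on the ``future'' of $H$ does not disturb the probability $(1-\pi/\pi_j)/(1-\pi)$ that $j$ lands in $\bS_j$. Second, once $j\in\bS_j$, the event $A_j$ is only resolved when $G$ is sampled at the very end, conditional on the full $\mathbf{Hist}_M$; the paper bounds $P(A_j\mid\mathbf{Hist}_M=\mathrm{Hist}_M)\le\pi_j^*$ by Harris for every possible continuation $\mathrm{Hist}_M$, and then averages. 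Your argument collapses these two steps by putting $j$ last, but that collapse is illegitimate.

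A minor omission: your formula $\pi_j^{**}-\pi$ presumes $\pi_j^{**}\ge\pi$. The case $\pi_j^{**}<\pi$ must be handled separately; the paper notes (citing \cite{riordan2022random}) that then $\pi_j^{**}=0$, so $A_j$ has probability zero and the bound holds trivially.
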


\begin{proof}
	Condition on $H=H_0$. First of all, note that as $H_0 \notin \mathcal{B}_1 \cup \mathcal{B}_2$, the coupling does not fail. Consider the coupling algorithm after we have decided on the final hyperedge $h_M$ (and possibly on $A_M$), but before generating the final graph $G$. In particular, we know that $H=H_0$, and (as the coupling does not fail) we have $\bY_M=Y_{M}:=\{i: h_i \in H_0\}$. We also know $\bY_i=Y_i := Y_{M} \cap [i]$ for all $0 \le i \le M$. However, given only the information $H=H_0$, we do not know what the sets $\bN_i$ and $\bS_i$ are, $0 \le i \le M$. In the following, we want to find bounds which hold for all possible instances of these sets, so we denote by $S_{i}$ and $N_{i}$ specific instances of the random sets. In other words, for all $0 \le i \le M$, we consider disjoint sets $S_{i}$ and $N_{i}$ whose union is $[i]\setminus Y_{i}$, and we let $\text{Hist}_{i}=(Y_{i}, N_{i},S_{i})$.

We start by observing that, for a given $\text{Hist}_{M}$,
\begin{align}
P(A_j \mid \mathbf{Hist}_M=\text{Hist}_{M}) &= P\big(A_j \mid \bigcap_{i \in Y_{M}}A_i \cap \bigcap_{i \in N_{M}}A_i^c\big) \nonumber \\
&\le P\big(A_j \mid \bigcap_{i \in Y_{M}}A_i \big) = \pi^*_j, \label{eq:bound1}
\end{align}
where the inequality follows by Harris' Lemma (Theorem \ref{theorem:harris}), since $A_j$ is an up-set, $\bigcap_{i \in N_{M}} A_i^c$ is a down-set and $G(n,p)$ conditional on the principal up-set  $\bigcap_{i \in Y_{M}} A_i$ is still a product probability space.

Since the inequality above holds for every possible $\text{Hist}_{M}$, it also holds if we condition only on the partial information $\mathbf{Hist}_{j-1}=\mathrm{Hist}_{j-1}$, $\bS_{j}=S_{j-1}\cup \{j\}$ and $H=H_0$ (since we can consider every possible continuation of these $\bY_{j-1}$, $\bN_{j-1}$, $\bS_j$ to a $\text{Hist}_{M}$)\footnote{To be precise, Equation \eqref{eq:bound1} implies Equation \eqref{eq:part1} for the following reason. Let $A, B$ be some events and $c\in[0,1]$. Then, if for some countable partition $B= \bigcup_i B_i$ of $B$, we have $P(A \mid B_i) \le c$ for every $i$, then $P(A \cap B) = \sum_i P(A \cap B_i) = \sum_i P(B_i)P(A \mid B_i) \le c \sum_i P(B_i)= c P(B)$, so $P(A \mid B) \le c$.}:
\begin{equation}
	P(A_j | \mathbf{Hist}_{j-1} = \text{Hist}_{j-1}, \bS_{j}=S_{j-1}\cup \{j\}, H=H_0) \le \pi^*_j. \label{eq:part1}
\end{equation}

Furthermore, given a fixed $\text{Hist}_{j-1}$, it follows similarly to the above that
\begin{align}
\pi_j = 
P\big(A_j | \bigcap_{i \in Y_{j-1}}A_i \cap \bigcap_{i \in N_{j-1}}A_i^c\big) \le  P\big(A_j | \bigcap_{i \in Y_{j-1}}A_i \big) \le P\big(A_j | \bigcap_{i \in Y_{M}}A_i \big) = \pi_j^*, \label{eq:part2}
\end{align}
where the second inequality follows from the fact that $Y_{j-1} \subset Y_{M}$.

	The advantage of bounding the two probabilities in \eqref{eq:part1}, \eqref{eq:part2} by $\pi_j^*$ is that $\pi_j^*$ only depends on $H_0$ and not on the full $\text{Hist}_{j-1}$ or $\text{Hist}_{M}$.
	
	In the final graph $G$, the event $A_j$ can only hold if $j \in S_M$, because $j \notin Y_M$ since $h_j \notin H_0$, and if $j$ were in $N_M$ then $A_j^c$ would hold.
	This implies $\bS_{j}=\bS_{j-1}\cup\{j\}$. Consider some $\text{Hist}_{j-1}$, and suppose that $\text{Hist}_{j-1}$ is such that $\pi_j \ge \pi$. Then we have $\bS_{j}=\bS_{j-1}\cup\{j\}$ if and only if the coin in step $j$ of the algorithm lands tails. So, with \eqref{eq:part2}, we obtain
	\begin{equation}\label{eq:bdd1}
		P(\bS_{j}=S_{j-1}\cup\{j\} \mid \mathbf{Hist}_{j-1}=\text{Hist}_{j-1}, H=H_0) =  \frac{1-\pi/\pi_j}{1-\pi} \le \frac{1-\pi/\pi_j^*}{1-\pi},
	\end{equation}
	because $1-\pi/\pi_j$ is the probability that the coin lands tails, and $1-\pi$ is the overall probability that we do not add $h_j$ to $H$, on which we have conditioned in $H=H_0$ (note that by construction of the algorithm, the decisions on $h_i \in H$ for $i >j$ are independent from what happens at step $j$ of the algorithm or earlier, so only the information $h_j \notin H$ from conditioning on $H=H_0$ is relevant for the event $\bS_{j}=S_{j-1}\cup\{j\}$). 
	
	So, given some $\text{Hist}_{j-1}$ such that $\pi_j \ge \pi$, 	using \eqref{eq:part1} and \eqref{eq:bdd1} it follows that
\begin{align}		P \left( A_j \mid \mathbf{Hist}_{j-1}=\text{Hist}_{j-1}, H=H_0 \right) &=	P \left( A_j \cap \{\bS_{j}=S_{j-1}\cup\{j\} \}\mid \mathbf{Hist}_{j-1}=\text{Hist}_{j-1}, H=H_0\right) \nonumber \\
			 &= P \left(\bS_{j}=S_{j-1}\cup\{j\} \mid \mathbf{Hist}_{j-1}=\text{Hist}_{j-1}, H=H_0\right) \nonumber \\
			& \quad \cdot P \left( A_j \mid \mathbf{Hist}_{j-1}=\text{Hist}_{j-1}, \bS_{j}=S_{j-1}\cup\{j\}, H=H_0 \right) \nonumber \\	
		&\le  \frac{1-\pi/\pi_j^*}{1-\pi} \cdot \pi_j^* = \frac{\pi_j^*-\pi}{1-\pi}. \label{eq:bd3}
	\end{align}

	Furthermore, note that \eqref{eq:bd3} also holds for any $\text{Hist}_{j-1}$ such that $\pi_j < \pi$: In this case, by a remark shortly after equation (7) in \cite{riordan2022random}, we have $\pi_j=0$. This means that the probability on the left-hand side of \eqref{eq:bd3} is $0$ (already after conditioning on $\mathbf{Hist}_{j-1}=\text{Hist}_{j-1}$, the event $A_j$ has probability $0$), whereas  the right-hand-side of \eqref{eq:bd3} is nonnegative because $\pi^*_j \ge P(A_j)= p^{r \choose 2} \ge \pi$. So since \eqref{eq:bd3} holds conditional on every possible $\mathbf{Hist}_{j-1}=\text{Hist}_{j-1}$, it also holds when only conditioning on  $H=H_0$:
\[
P(A_j \mid H=H_0) \le \frac{\pi_j^*-\pi}{1-\pi}.
\]
	\end{proof}
	
	\subsection{Extra cliques, $r=3$}\label{section:extracliques3}
	As before, we let $p\le n^{-2/3+o(1)}$ and set $\pi=p^{r \choose 2}(1-n^{-\delta(3)})$. The following lemma corresponds to Lemma~\ref{lemma:extrabound} for the case $r \ge 4$.
			\begin{lemma}\label{lemma:extrabound3}
Suppose $r = 3$, let $j \in [M']$, and let $H_0$ be a hypergraph so that $h_j \notin H_0$, and $H_0 \notin \mathcal{B}_1 \cup \mathcal{B}_2$.  Set
	\[
\pi_j^* = P\big(A_j \mid \bigcap_{i:h_i \in H_0} A_i\big),
\]
Let $\mc{F}$ be the event that the coupling in Theorem~\ref{theorem:riordancoupling} for $r=3$ fails in the first step (i.e.\ that $\mc{C}_G \neq \mc{C}_H$). Then, if $\mc{F}$ does not hold and the coupling produces the hypergraph $H=H_0$, we can bound the probability that $A_j$ is  an extra clique in $G$:
\[
P\big(A_j \mid H=H_0\,,\, \mc{F}^c \big) \le \frac{\pi_j^*-\pi}{1-\pi}+n^{-3+o(1)}
\]
		\end{lemma}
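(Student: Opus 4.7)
The proof follows the template of Lemma~\ref{lemma:extrabound}, with $\pi_j'$ from \S\ref{section:r3algo} playing the role of $\pi$ and with extra bookkeeping for the clean-$3$-cycle conditioning. Fix $H=H_0$ and condition on $\mc F^c$, so that $\mc C_G=\mc C_H=C_0$ (the clean $3$-cycles of $H_0$) and both $\mc U_G$ and $\mc D_G$ hold. Since $H_0\notin\mc B_1\cup\mc B_2$, every sub-hypergraph $\tilde H\subseteq H_0$ also avoids those bad events, so the hypotheses of Lemma~\ref{lemma:boundforpijprime} are available at every step $j$. The key observation is that $\mc U_G$ coincides with $\bigcap_{h\in\bigcup_{c\in C_0}c}A_h$, whence $\mc U_G\cap\bigcap_{i\in\bY_{M'}}A_i=\bigcap_{h\in H_0}A_h$ is a principal up-set; conditioning on it reduces to a product probability space on the remaining edges, in which Harris' inequality (Theorem~\ref{theorem:harris}) absorbs the down-sets $\mc D_G$ and $\bigcap_{i\in\bN_{M'}}A_i^c$. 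This yields the analogue of \eqref{eq:bound1}, namely $P(A_j\mid \mathbf{Hist}_{M'},H=H_0,\mc F^c)\le\pi_j^*$ for every consistent history, hence by the same averaging passage as in \eqref{eq:part1} also $P(A_j\mid\mathbf{Hist}_{j-1},j\in\bS_j,H=H_0,\mc F^c)\le\pi_j^*$, and a parallel Harris computation applied to the definition \eqref{eq:defpij} of $\pi_j$ shows $\pi_j\le\pi_j^*$.

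Next I case-split on the behaviour of the modified algorithm at step $j$, conditional on a fixed $\mathbf{Hist}_{j-1}$. In Case~1 ($\pi_j\ge\pi_j'$, $\pi_j>0$), the constraint $h_j\notin H_0$ together with the fact that $A_j$ can only occur on $\{j\in\bS_j\}$ gives
\[
 P(j\in\bS_j\mid\mathbf{Hist}_{j-1},H=H_0)=\frac{1-\pi_j'/\pi_j}{1-\pi_j'},
\]
and combining this with the bound from the previous paragraph and with $\pi_j\le\pi_j^*$ yields
\[
 P(A_j\mid \mathbf{Hist}_{j-1},H=H_0,\mc F^c)\le\frac{\pi_j^*-\pi_j'}{1-\pi_j'}.
\]
The degenerate sub-case $\pi_j=\pi_j'$ (whether both $0$, in which case the algorithm explicitly decides $A_j^c$, or both positive, in which case the step-$j$ coin is deterministic and $h_j\notin H_0$ forces $A_j^c$) and Case~2, in which Lemma~\ref{lemma:boundforpijprime}(d) forces $\pi_j=0$, each give $P(A_j\mid\cdot)=0$ directly.

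It remains to replace $\pi_j'$ by $\pi$, and this is where the extra $n^{-3+o(1)}$ error enters: by Lemma~\ref{lemma:boundforpijprime}(b)--(c), whenever $\pi_j>\pi_j'$ we have $0\le\pi-\pi_j'\le\pi\cdot n^{-1+o(1)}$, while the elementary identity
\[
 \frac{\pi_j^*-\pi_j'}{1-\pi_j'}-\frac{\pi_j^*-\pi}{1-\pi}=\frac{(\pi-\pi_j')(1-\pi_j^*)}{(1-\pi_j')(1-\pi)},
\]
combined with $\pi_j^*\ge \pi$ and $\pi_j'\le\pi$, bounds the discrepancy by $(\pi-\pi_j')/(1-\pi)\le \pi\cdot n^{-1+o(1)}=n^{-3+o(1)}$ (using $\pi\le n^{-2+o(1)}$). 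Averaging over $\mathbf{Hist}_{j-1}$ then delivers the claimed inequality. The principal subtlety of the argument is precisely this quantitative control of the drift $\pi-\pi_j'$, for which Lemma~\ref{lemma:boundforpijprime} is indispensable; the rest is the same bookkeeping as in Lemma~\ref{lemma:extrabound}, augmented by the principal up-set $\mc U_G$ and the down-set $\mc D_G$.
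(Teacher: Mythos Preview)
Your overall structure matches the paper's proof: the Harris bounds $P(A_j\mid\mathbf{Hist}_{M'},\ldots)\le\pi_j^*$ and $\pi_j\le\pi_j^*$, the case split on $\pi_j$ versus $\pi_j'$, and the replacement of $\pi_j'$ by $\pi$ via Lemma~\ref{lemma:boundforpijprime}(b)--(c). Your explicit identity
\[
 \frac{\pi_j^*-\pi_j'}{1-\pi_j'}-\frac{\pi_j^*-\pi}{1-\pi}=\frac{(\pi-\pi_j')(1-\pi_j^*)}{(1-\pi_j')(1-\pi)}
\]
is a clean way to extract the $n^{-3+o(1)}$ error.

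There is, however, a genuine gap at exactly the place where the $r=3$ argument diverges from $r\ge4$. You assert
\[
 P\big(j\in\bS_j\mid\mathbf{Hist}_{j-1},H=H_0,\mc F^c\big)=\frac{1-\pi_j'/\pi_j}{1-\pi_j'}
\]
with the justification ``the constraint $h_j\notin H_0$ \ldots\ gives''. In the $r\ge4$ algorithm the hyperedge decisions at steps $i>j$ are independent of step~$j$, so conditioning on the full future $H=H_0$ reduces to conditioning on $h_j\notin H$ alone, and the formula follows immediately. In the modified $r=3$ algorithm this independence fails: each $\pi_i'$ for $i>j$ is computed from the revealed $H$-information, which includes whether $h_j\in H$, so the future event $\mc H_{\mathrm f}=\{h_i\in H\Leftrightarrow h_i\in H_0,\ i>j\}$ is \emph{not} independent of step~$j$. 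The paper resolves this with a past/present/future decomposition and a Bayes argument (equations \eqref{eq:pastpresentfuture}--\eqref{eq:reduced}), the point being that the future $H$-decisions depend on step~$j$ \emph{only} through the bit $h_j\in H$ or not---and that bit is already pinned down by the conditioning---so $P(\mc H_{\mathrm f}\mid\mcle')=P(\mc H_{\mathrm f}\mid\mcle)$ and the extra conditioning on $\mc H_{\mathrm f}$ drops out. Your one-line claim does not supply this reasoning, and since it is precisely the new ingredient for $r=3$, it needs to be spelled out.
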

		\begin{proof}
		         First of all, note that $\{H=H_0\}\cap \mc{F}^c$ implies that the coupling does not fail, as $H_0 \notin \mathcal{B}_1 \cup \mathcal{B}_2$. Let $\mc{K}_G$ and $\mc{K}_H$ denote the events that $G$ and $H$ contain exactly the clean $3$-cycles of $H_0$. Then the coupling is set up so that the distribution of $G$, conditional on $H=H_0$ and $\mc{F}^c$, is the same as the distribution of $G$ conditional on $H=H_0$ and $\mc{K}_G$ (note that $H=H_0$ implies $\mc{K}_H$). In particular,
		        \begin{equation} 
		       \label{eq:equivalentcondition} 
		       P\big(A_j \mid H=H_0\,,\, \mc{F}^c \big) = P\big(A_j \mid H=H_0\,,\, \mc{K}_G\big)
		        .\end{equation}
		        
		        As in the proof of Lemma~\ref{lemma:extrabound}, we will consider different histories $\textbf{Hist}_i=\text{Hist}_i=(Y_i, N_i, S_i)$ of the process which are compatible with the outcome $H=H_0$. As the coupling does not fail, we have $\bY_{M'}=Y_{M'}:=\{i: h_i \in H_0\}$, which gives us that $\bY_i=Y_i := Y_{M'} \cap [i]$ for all $0 \le i \le M'$. Given only the outcome $H=H_0$ (and $\mc{F}^c$), we do not know what the sets  $\bN_i$ and $\bS_i$ are, $0 \le i \le M'$, so we will consider bounds which are valid over all possible choices of these random sets, and we denote specific instances of the sets by $S_{i}$ and $N_{i}$. In other words, for all $0 \le i \le M'$, we fix some disjoint sets $S_{i}$ and $N_{i}$ whose union is $[i]\setminus Y_{i}$, and we let $\text{Hist}_{i}=(Y_{i}, N_{i},S_{i})$.

		        
		        Note that if $\text{Hist}_{j-1}$ is so that $\pi_j < \pi_j'$, then $\pi_j=0$ by Lemma~\ref{lemma:boundforpijprime}, thus the probability of $A_j$ is $0$ after step $j-1$ of the algorithm, and it follows that $P(A_j \mid H=H_0, \mc{F}^c, \pi_j < \pi_j')=0$. Hence, we can assume in the following that $\pi_j \ge \pi_j'$. Further, note that if $\pi_j = \pi_j'$, then in step $j$ of the algorithm we have decided that $h_j \notin H$ if and only if $j \in N_j$ (that is, we decide that $A_j$ does not hold). So, as $h_j \notin H$, we have $P(A_j \mid H=H_0, \mc{F}^c, \pi_j = \pi_j')=0$. Thus, indeed we only need to bound $P(A_j \mid H=H_0, \mc{F}^c, \pi_j > \pi_j')$, and for the rest of the proof we only consider histories $\text{Hist}_{j-1}$ so that $\pi_j > \pi_j'$.

Let $\mc{U}_G$, $\mc{D}_G$ be the events that $G$ contains the clean $3$-cycles of $H_0$, and no other clean $3$-cycles, respectively, so $\mc{K}_G = \mc{U}_G \cap \mc{D}_G$. 		        Similarly to \eqref{eq:bound1},  we observe that 
\begin{align*}
P(A_j \mid  \mc{K}_G\,,\, \mathbf{Hist}_{M'}=\text{Hist}_{M'}) &= P\big(A_j \mid  \mc{U}_G\cap \mc{D}_G \cap \bigcap_{i \in Y_{M'}}A_i \cap \bigcap_{i \in N_{M'}}A_i^c\big) \nonumber \\
&\le P\big(A_j \mid \bigcap_{i:h_i \in H_0}A_i \big) = \pi^*_j
\end{align*}
(noting that $\mc{U}_G \cap \bigcap_{i \in Y_{M'}}A_i =\bigcap_{i:h_i \in H_0} A_i$ is a principal up-set). Since this bound holds for every possible $\text{Hist}_{M'}$, it also holds when conditioning only on partial information (analogously to \eqref{eq:part1}):
\begin{equation}
	P(A_j | \mc{K}_G, \mathbf{Hist}_{j-1} = \text{Hist}_{j-1}, \bS_{j}=S_{j-1}\cup \{j\}, H=H_0) \le \pi^*_j. \label{eq:part1-3}
\end{equation}
Similarly, given a fixed $\text{Hist}_{j-1}$,
\begin{align}
\pi_j = P\big(A_j |\mc{U}_G \cap \mc{D}_G \cap \bigcap_{i \in Y_{j-1}}A_i \cap \bigcap_{i \in N_{j-1}}A_i^c\big) \le  P\big(A_j |\mc{U}_G  \cap \bigcap_{i \in Y_{j-1}}A_i \big)  \le P\big(A_j | \bigcap_{i:h_i \in H_0}A_h \big) = \pi_j^*, \label{eq:part2-3}
\end{align}
analogously to \eqref{eq:part2} in the case $r \ge 4$. We have to work a little harder to obtain a statement similar to \eqref{eq:bdd1}, and for this, we first split up the hypergraph $H=H_0$ into past, present and future, as seen from time $j$. The past is covered by $\mathbf{Hist}_{j-1}=\mathrm{Hist}_{j-1}$, the present is the event $\{h_j\not\in H\} = \{j \notin \bY_j\}$, and we denote the future by $\mathcal H_{\mathrm f}$, i.e.~the event that for all $i>j$ we have $h_i\in H$ if and only if $h_i\in H_0$. So we have
\begin{equation}\label{eq:pastpresentfuture}
P(j\in\bS_j|\mc{K}_G, \mathbf{Hist}_{j-1}=\mathrm{Hist}_{j-1},H=H_0)
=P(j\in\bS_j|\mc{K}_G,\mathbf{Hist}_{j-1}=\mathrm{Hist}_{j-1},h_j\not\in H,\mathcal H_{\mathrm f}).
\end{equation}
Let $\mc{E} = \mc{K}_H \cap \mc{K}_G \cap \{\textbf{Hist}_{j-1}=\text{Hist}_{j-1}, h_j \notin H\}$ and $\mc{E}' = \mc{E} \cap \{j \in \bS_j\}$. Then, noting that $\{H=H_0\} \subseteq \mc{K}_H$, using \eqref{eq:pastpresentfuture} and Bayes' Theorem, we obtain
\[
P(j\in\bS_j|\mc{K}_G, \mathbf{Hist}_{j-1}=\mathrm{Hist}_{j-1},H=H_0) =P(j\in\bS_j|\mcle, H_{\mathrm f})= \frac{P(\mathcal H_{\mathrm f}|\mcle')}
{P(\mathcal H_{\mathrm f}|\mcle)}P(j\in\bS_j|\mcle).
\]
Now, recalling \eqref{eq:defpijprime}, note that the future hyperedge decisions only depend on $j\in\bS_j$ in that this implies $h_j\not\in H$: by construction of the algorithm, the probabilities for future hyperedges decisions for $h_i$, $i >j$, only depend on the information revealed about $H$. But $h_j \notin H$ is already covered by $\mcle$, so we have $P(\mclh_{\mathrm f}|\mcle')=P(\mclh_{\mathrm f}|\mcle)$ and hence
\begin{equation}\label{eq:reduced}
P(j\in\bS_j|\mc{K}_G,\mathbf{Hist}_{j-1}=\mathrm{Hist}_{j-1},H=H_0)
=P(j\in\bS_j| \mc{K}_G, \mc{K}_H, \mathbf{Hist}_{j-1}=\mathrm{Hist}_{j-1},h_j\not\in H).
\end{equation}
Note that $\mc{K}_G, \mc{K}_H, \mathbf{Hist}_{j-1}=\mathrm{Hist}_{j-1}$ is exactly the information on $G$ and $H$ available to us at time $j$ of the algorithm (before deciding $h_j \in H$ and $A_j$). Recall that we assume that $\text{Hist}_{j-1}$ is so that $\pi_j >  \pi_j'$. Then analogously to \eqref{eq:bdd1}, using \eqref{eq:part2-3} we can bound 
 	\begin{equation*}
 		P(j \in \bS_{j} \mid \mc{K}_G, \mc{K}_H, \mathbf{Hist}_{j-1}=\text{Hist}_{j-1}, h_j \notin H) =  \frac{1-\pi_j'/\pi_j}{1-\pi_j'} \le \frac{1-\pi_j'/\pi_j^*}{1-\pi_j'},
 	\end{equation*}
 	because $1-\pi_j'/\pi_j$ is the probability that the coin at step $j$ of the algorithm lands tails (so we add $j$ to $\bS_j$), and $1-\pi_j'$ is the overall probability that we do not add $j$ to $\bY_j$ (that is, $h_j \notin H$), which we've conditioned on. So it follows with \eqref{eq:reduced} that
 	\[
 	P(j\in\bS_j \mid \mc{K}_G,\mathbf{Hist}_{j-1}=\mathrm{Hist}_{j-1},H=H_0) \le \frac{1-\pi_j'/\pi_j^*}{1-\pi_j'}.\]

With \eqref{eq:part1-3}, this gives
 	\[
 	P(A_j \mid \mc{K}_G,\mathbf{Hist}_{j-1}=\mathrm{Hist}_{j-1},H=H_0) \le \frac{\pi_j^*-\pi_j'}{1-\pi_j'}.\]
 	Bounding $\pi_j'$ with Lemma~\ref{lemma:boundforpijprime}~c), this yields
 	 	\begin{equation}\label{eq:bdd1-3}
 		P(A_j \mid \mc{K}_G,\mathbf{Hist}_{j-1}=\mathrm{Hist}_{j-1},H=H_0) \le \frac{\pi_j^*-\pi}{1-\pi}+n^{-3+o(1)}.\end{equation}
 As the bound \eqref{eq:bdd1-3} holds for every possible $\text{Hist}_{j-1}$ with $\pi_j > \pi_j'$, it also holds if we only condition on $\mc{K}_G$, $H=H_0$ and $\pi_j > \pi_j'$, which together with \eqref{eq:equivalentcondition}  gives the result. 
 	\end{proof}

	\subsection{Proof of Lemma~\ref{lemma:lowdegreenoextra}}\label{section:lowdegreeextra}
We will need the following lemma, which was Lemma 13 in \cite{riordan2022random} (for $r \ge 4$) and Lemma 3 in \cite{heckel2020random} (for $r=3$).
\begin{lemma}[\cite{heckel2020random,riordan2022random}] \label{lemma:extraavoidable}
	Fix $r\ge3$, let $H$ be an $r$-uniform hypergraph, and let $G$ be the simple graph obtained by replacing each hyperedge of $H$ by an $r$-clique. Suppose that $G$ contains an $r$-clique $T$ on a set of $r$ vertices and the corresponding hyperedge is not present in $H$.
	\begin{itemize}
	    \item[a)] If $r \ge 4$, then $H$ contains an avoidable configuration.
	    \item[b)] If $r=3$, then $T$ is the middle triangle of a clean $3$-cycle of $H$, or $H$ contains an avoidable configuration.
	\end{itemize}\qed
\end{lemma}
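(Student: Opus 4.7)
The lemma is deterministic, so the plan is to exhibit, inside $H$, a small connected sub-hypergraph $F$ of nullity at least $2$ (or, in the $r=3$ case, identify a clean $3$-cycle on $T$). For every pair $\{u,v\} \subseteq T$ the edge $\{u,v\}$ is present in $G$, so some hyperedge of $H$ contains $\{u,v\}$; pick one such $h_{uv}$ and let $F$ be the resulting duplicate-free family. Then $|F| \le \binom{r}{2} \le 2^{r+1}$, and $F$ is connected through $T$ (for any $u \in T$, all hyperedges in $F$ passing through $u$ share $u$, and the various $h_{uv}$'s tie these stars together). For $h \in F$ set $j_h := |h \cap T|$; then $2 \le j_h \le r-1$, the upper bound because $T \notin H$.

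The nullity of $F$ is controlled by a vertex count. The only forced vertices of $F$ are those of $T$, and each $h \in F$ contributes at most $r - j_h$ new vertices, so $v(F) \le r + \sum_{h \in F}(r - j_h)$, and therefore
\[
n(F) = (r-1)|F| + 1 - v(F) \ge \sum_{h \in F}(j_h - 1) + 1 - r.
\]
Thus the task reduces to showing $\sum_{h \in F}(j_h - 1) \ge r + 1$.

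For $r \ge 4$ (part (a)), if $|F| \ge r+1$ this is immediate from $j_h - 1 \ge 1$. Otherwise combine the covering bound $\sum \binom{j_h}{2} \ge \binom{r}{2}$ with the ceiling $j_h \le r-1$ to obtain $\sum j_h(j_h - 1) \le (r-1)\sum(j_h - 1)$, which gives $\sum(j_h - 1) \ge r$. Equality would force $j_h = r-1$ for every $h$ and $|F| = r/(r-2)$. For $r \ge 5$ this is not an integer; for $r = 4$ it would require two $(r-1)$-subsets of $T$, but any two $3$-subsets of a $4$-set leave at least one pair uncovered. In every case the inequality is strict, and $n(F) \ge 2$.

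For $r = 3$ (part (b)) every $j_h$ must equal $2$ (else $h = T$), so $|F| \ge 3$, and $|F| \ge 4$ already yields $n(F) \ge 2$. If $|F| = 3$ the three hyperedges are in bijection with the pairs of $T$: $h_i = \{u_a, u_b, w_i\}$ with $w_i \notin T$. If the $w_i$ are pairwise distinct, $F$ is exactly a clean $3$-cycle whose middle triangle is $T$; if two of the $w_i$ coincide, then $v(F) \le 5$ and a direct computation gives $n(F) = 2\cdot 3 + 1 - v(F) \ge 2$, so $F$ is avoidable. The main obstacle is the tightness of the inequality $\sum(j_h - 1) \ge r$: strictness hinges on the ad hoc combinatorial check ruling out $r = 4$, $|F| = 2$, after which everything else reduces to routine bookkeeping.
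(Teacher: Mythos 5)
The paper does not prove Lemma~\ref{lemma:extraavoidable} at all --- it is imported verbatim from Lemma~13 of \cite{riordan2022random} and Lemma~3 of \cite{heckel2020random} --- so there is no in-paper argument to compare against; your proof must stand on its own, and it does. The construction (one covering hyperedge $h_{uv}$ per pair of $T$, the bound $2\le j_h\le r-1$ with the upper bound from $T\notin H$, the vertex count $v(F)\le r+\sum_h(r-j_h)$, and the reduction to $\sum_h(j_h-1)\ge r+1$) is sound, and the delicate step is handled correctly: equality in $\binom{r}{2}\le\sum_h\binom{j_h}{2}\le\frac{r-1}{2}\sum_h(j_h-1)$ forces $j_h=r-1$ throughout and $|F|=r/(r-2)$, which is non-integral for $r\ge5$ and, for $r=4$, contradicts the fact that two $3$-subsets of a $4$-set cannot cover all six pairs. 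The $r=3$ case analysis ($|F|\ge4$ gives nullity $\ge2$; $|F|=3$ with distinct apexes is exactly a clean $3$-cycle with middle triangle $T$; coinciding apexes give $v(F)\le5$ and nullity $\ge2$) is complete, and $|F|\le\binom r2\le 2^{r+1}$ with connectivity via the $h_{uu'}$ keeps $F$ within the definition of an avoidable configuration. This is essentially the covering-plus-nullity-count argument of the cited sources, so your write-up can serve as a self-contained substitute for the citation.
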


		Now consider the coupled $G$ and $H$ with $p=p_+$ and $\pi=\pi_+$. Condition on $H=H_0$ for some hypergraph $H_0 \notin \mc{B}$. Then in particular, there are at most $(\log n)^{8g(n)}=n^{o(1)}$ low-degree vertices in $H_0$. For $r=3$, no low-degree vertex is contained in the middle triangle of a clean $3$-cycle in $H$ whp by Lemma \ref{lemma:lowdegreenotbad}, as their expected number is of order $\log^3 n$, which we assume in the following. Let $v$ be a low-degree vertex. If $r\ge 4$, then by Lemma \ref{lemma:extrabound} the expected number of `extra' cliques $v$ gets in $G$ is at most
	\begin{align}
	\sum_{\substack{j:v \in h_j\\ h_j \notin H_0}}  \frac{\pi_j^*-\pi_+}{1-\pi_+} \sim \sum_{\substack{j:v \in h_j\\ h_j \notin H_0}}(\pi_j^*-\pi_+) = \sum_{\substack{j:v \in h_j\\ h_j \notin H_0}} (\pi_j^*-p_+^{r \choose 2}) +\sum_{\substack{j:v \in h_j\\ h_j \notin H_0}}(p_+^{r \choose 2} - \pi_+), \label{eq:sums}
	\end{align}
	where we take the sums over hyperedges (or potential cliques) containing $v$ which are not in $H_0$. 	As $\pi_+=p_+^{r \choose 2}(1-n^{-\delta})$, the second sum can be bounded by $n^{-c}$ for some small constant $c>0$.

 Now for $r=3$, we can bound the expected number of `extra' cliques $v$ gets in $G$ with the help of Lemma~\ref{lemma:extrabound3} in the same way, except that we get the extra term $\sum_{\substack{h:v \in h\\ h \notin H_0}}n^{-3+o(1)}$. But this term is of order $n^{-1+o(1)}$, because only of order $n^2$ hyperedges $h$ appear in the sum.

 Thus, for both $r=3$ and $r \ge 4$, it only remains to bound the first sum in \eqref{eq:sums}. This sum is, roughly speaking, the number of extra cliques containing $v$ we expect to get in $G$ if we condition on the cliques corresponding to hyperedges in $H_0$ being present, versus without any conditioning. 
In the following lemma we show that this first sum is also small --- it follows from the lemma and our previous observations that the expected number of extra cliques containing $v$ can be bounded by $n^{-c}$ for some small constant $c>0$. As there are only $n^{o(1)}$ low-degree vertices in $H=H_0$, whp no such vertices exist, which concludes the proof of Lemma~\ref{lemma:lowdegreenoextra}.

	\begin{lemma}\label{lemma:sumbound}
		Suppose that $H_0 \notin \mc{B}_1 \cup \mc{B}_2 
  $, and define $\pi_j^*$ as in Lemmata~\ref{lemma:extrabound} (for $r \ge 4$) and \ref{lemma:extrabound3} (for $r=3$). Let $s = \mathrm{min}\big(\frac 2r, 1- \frac 2r \big) >0$. Let $v$ be an arbitrary vertex, and in the case $r=3$ suppose that $v$ is not an endpoint of the middle triangle of a clean $3$-cycle in $H_0$. Then	\begin{equation}\label{eq:sumbound}
 \sum_{\substack{j:v \in h_j\\ h_j \notin H_0}}(\pi_j^* - p_+^{r \choose 2}) \le n^{-s+o(1)}.
		\end{equation}
	\end{lemma}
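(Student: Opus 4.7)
}

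The plan is to first obtain a clean closed-form expression for $\pi_j^*$, then reduce the lemma to a combinatorial counting problem about how edges of the ``shadow'' of $H_0$ can fit inside potential cliques through $v$.

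The starting observation is that, because the edges of $G(n,p_+)$ are independent and each event $A_i$ is simply the conjunction ``every edge of $T_i$ is present'', one has
\[
P\Bigl(\bigcap_{h_i\in H_0}A_i\Bigr)=p_+^{|F|}\qquad\text{and}\qquad
P\Bigl(A_j\cap\bigcap_{h_i\in H_0}A_i\Bigr)=p_+^{|E_j\cup F|},
\]
where $E_j$ is the edge set of the potential clique $T_j$ and $F=\bigcup_{h_i\in H_0}E_i$ is the set of edges covered by the cliques of $H_0$. Setting $m_j=|E_j\cap F|$, this gives the exact formula $\pi_j^*=p_+^{\binom{r}{2}-m_j}$, and hence
\[
\pi_j^*-p_+^{\binom{r}{2}}=p_+^{\binom{r}{2}-m_j}\bigl(1-p_+^{m_j}\bigr)\le p_+^{\binom{r}{2}-m_j}\fctind\{m_j\ge1\}.
\]
Writing $N_m=|\{h_j:v\in h_j,\,h_j\notin H_0,\,m_j=m\}|$, it therefore suffices to show
\[
\sum_{m=1}^{\binom{r}{2}}N_m\,p_+^{\binom{r}{2}-m}\le n^{-s+o(1)}.
\]

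Next I would bound $N_m$ combinatorially using the two structural properties of $H_0$ guaranteed by $H_0\notin\mc B_1\cup\mc B_2$: the maximum degree is $O(\log n)$ (so the number of edges of $F$ incident with any single vertex is $O(\log n)$, and $|F|=O(n\log n)$), and no two hyperedges of $H_0$ share three or more vertices (so each hyperedge of $H_0$ contributes at most one edge to any fixed $T_j$). Counting pairs $(h_j,S)$ with $S\subseteq E_j\cap F$, $|S|=m$, splitting by $|V(S)|$ and by whether $v\in V(S)$, one obtains the bound $N_m\le n^{r-1-m}(\log n)^{m+O(1)}$: each additional covered edge costs a factor of order $n^{-1}\log n$, because one trades a free vertex of $h_j$ for an edge of $F$ attached to the existing configuration. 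Plugging in $p_+^{\binom{r}{2}-m}=n^{-r+1+2m/r+o(1)}$ yields
\[
N_m\,p_+^{\binom{r}{2}-m}=n^{-m(1-2/r)+o(1)},
\]
and the $m=1$ term dominates, giving $n^{-(1-2/r)+o(1)}\le n^{-s+o(1)}$ because $\min(2/r,1-2/r)\le 1-2/r$.

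Finally, I would deal with the genuinely problematic boundary case. For $r\ge4$ no extra care is needed, since $1-2/r\ge 2/r$ ensures the $m=1$ term is already $n^{-s+o(1)}$. For $r=3$ one has $s=1/3=1-2/r$, so the $m=1$ bound is tight, and one must additionally verify that $m_j=3$ does not contribute: if all three edges of $T_j$ lie in $F$, the three covering hyperedges are distinct (otherwise $h_j\in H_0$ by the no-3-overlap property) and pairwise meet in distinct single vertices (otherwise a partner pair forces $h_j\in H_0$ as above), hence they form a clean $3$-cycle with middle triangle $T_j$; the hypothesis that $v$ is not in the middle triangle of any clean $3$-cycle of $H_0$ then rules this out.

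The main obstacle is the combinatorial bookkeeping in the bound $N_m\le n^{r-1-m}(\log n)^{m+O(1)}$: the counting must simultaneously account for different isomorphism types of the $m$-edge subgraph $S$ of $F$ (differing in $|V(S)|$), for whether $v\in V(S)$, and for the fact that partner hyperedges in $H_0$ can cover the same edge of $T_j$. The degree bound from $\mc B_1$ and the no-avoidable-configuration bound from $\mc B_2$ together keep the number of ways to extend a partial configuration controlled, and one confirms by direct case analysis for small $m$ that each extension by one covered edge is indeed responsible for the desired $n^{-1}\log n$ reduction factor.
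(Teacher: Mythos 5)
Your reduction to the exact formula $\pi_j^*=p_+^{\binom{r}{2}-m_j}$ with $m_j=|E_j\cap F|$ is correct, and your treatment of $r=3$ (where the only cyclic configuration is the full triangle, excluded via the clean-$3$-cycle hypothesis) is essentially sound. But the key counting bound $N_m\le n^{r-1-m}(\log n)^{m+O(1)}$ is false, and the argument collapses for $r\ge 4$. The heuristic ``each additional covered edge costs a factor $n^{-1}\log n$'' only holds when the covered edges form a forest: an edge of $F$ between two already-chosen vertices costs nothing. Concretely, for $r\ge4$ take any $h_i\in H_0$ with $v\notin h_i$, any $W\subseteq h_i$ with $|W|=3$, and $r-4$ further arbitrary vertices; the resulting $h_j=\{v\}\cup W\cup\cdots$ satisfies $m_j\ge 3$ (the triangle $\binom{W}{2}$ lies in $F$), and there are $\Theta(n^{r-3}\log n)$ such $h_j$, whereas your bound asserts $N_3\le n^{r-4+o(1)}$. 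For $r=4$ these terms contribute $\Theta(n\log n)\cdot p_+^{3}=n^{-1/2+o(1)}$, not the $n^{-3(1-2/r)}=n^{-3/2}$ your formula predicts. The correct bookkeeping parameter is not $m$ but the component structure of $G_0[h_j]$: choosing a component with $c_i$ vertices saves $c_i-1$ factors of $n$ while freeing up to $\binom{c_i}{2}$ edges, giving an exponent $-\sum_i(c_i-1)(1-c_i/r)$, which is why the paper organises the proof around the sequence $c_1,\dots,c_t$ rather than around $m$.

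A second, related gap is your claim that for $r\ge4$ ``no extra care is needed.'' The case where $G_0[h_j]$ is connected on all $r$ vertices cannot be dispatched by generic counting: if all $\binom{r}{2}$ edges were present one would have $\pi_j^*-p_+^{\binom{r}{2}}\sim 1$, and a single such $h_j$ would destroy the bound. This is excluded only by the avoidable-configuration argument (Lemma~\ref{lemma:extraavoidable}a), which you invoke solely for $r=3$. Moreover, the surviving connected-spanning configurations with one edge missing contribute $n^{o(1)}\cdot p_+=n^{-2/r+o(1)}$; for $r\ge5$ this exceeds your claimed total of $n^{-(1-2/r)+o(1)}$ and is precisely the reason the lemma is stated with $s=\min(2/r,\,1-2/r)$ rather than $1-2/r$. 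So the $m=1$ term does not dominate; you need the separate case $c_1=r$ (bounded by $n^{-2/r+o(1)}$ via Lemma~\ref{lemma:extraavoidable}) alongside the case ``all $c_i<r$'' (bounded by $n^{-(1-2/r)+o(1)}$).
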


	\begin{proof}[Proof of Lemma~\ref{lemma:sumbound}]
		
	Let $G_0$ be the graph obtained by replacing every hyperedge in $H_0$ with an $r$-clique. Then conditioning on $\bigcap_{i: h_i \in H_0} A_i$ is the same thing as conditioning on all the edges of $G_0$ being present.
	
	As $H_0 \notin \mc{B}_1$, the maximum degree of $G_0$ is $n^{o(1)}$. Fixing some vertex $v$, we want to count all possible hyperedges $h_j$ which appear in the sum in \eqref{eq:sumbound}, and bound their contribution to \eqref{eq:sumbound}. Let $C_{h_j}$ be the clique corresponding to the hyperedge $h_j$. For some given $h_j \notin H_0$ so that $v \in h_j$, let the components of the subgraph induced by $h_j$ in $G_0$ have $c_1, \dots, c_t$ vertices respectively, so that $\sum_{i=1}^t c_i = r$. If $c_1 = c_2 = \dots = c_t = 1$, then none of the edges of $C_{h_j}$ are already present as edges in $G_0$, so then $\pi^*_j = p_+^{r \choose 2}$. Hence, such hyperedges $h_j$ contribute $0$ to the sum in \eqref{eq:sumbound}, and in the following we only consider those $h_j$ (and corresponding $C_{h_j}$) where $c_i \ge 2$ for at least one $i$.
		
		Given a fixed sequence $c_1, \dots, c_t$, there are at most
		\begin{equation}\label{eq:cliqueways}
		n^{t-1+o(1)}
		\end{equation}
		ways to pick the clique $C_{h_j}$ (i.e.\ the hyperedge $h_j$): because we already have $v$, we make one choice (from $n$ vertices) for the first vertex from each of the other $t-1$ components, and for each of the $r-t$ remaining vertices there are only $n^{o(1)}$ choices as the maximum degree of $G_0$ is $n^{o(1)}$.
		
		In particular, if $c_1=r$ (so $t=1$), there are only $n^{o(1)}$ choices for $C_{h_j}$. Not all edges in the clique $C_{h_j}$ can be contained in $G_0$ already, otherwise $H_0$ would contain an avoidable configuration --- this follows from Lemma \ref{lemma:extraavoidable}, noting that in the case $r=3$, $v$ is not in the middle triangle of a clean $3$-cycle in $H_0$ --- so $\mc{B}_2$ would hold. So, in the case $c_1=r$ we have
		\[
		\pi^*_j \le p_+ = n^{-2/r+o(1)},\]
		which implies that the overall contribution to \eqref{eq:sumbound} from hyperedges $h_j$ with $c_1=r$ is at most $n^{-2/r+o(1)} \le n^{-s+o(1)}$.
		
		Hence, in the following we only consider sequences $c_1, \dots, c_t$ where $c_i < r$ for all $i$. After conditioning on the edges in $G_0$ being present, a potential clique $C_{h_j}$ is missing (at least) all the edges between different components, so
		\[
		\pi^*_j \le p_+^{{r \choose 2} - \sum_{i=1}^t {c_i \choose 2}}.
		\]
		Together with \eqref{eq:cliqueways}, given a fixed sequence $c_1, \dots, c_t$ (with $c_i<r$ for all $i$ and $c_i \ge 2$ for at least one $i$), the contribution to \eqref{eq:sumbound} from hyperedges $h_j$ (and cliques $C_{h_j}$) corresponding to this sequence is at most
		\begin{align*}
			n^{t-1+o(1)}p_+^{{r \choose 2} - \sum_{i=1}^t {c_i \choose 2}} 
   &= \left( n^{r-1} p_+^{{r \choose 2}} \right)n^{-r+t+ \frac 2r \sum_{i=1}^t {c_i \choose 2} +o(1)} =n^{-\sum_{i=1}^t(c_i-1)+ \frac 1r \sum_{i=1}^t c_i(c_i-1) +o(1)}
   \\
   &= n^{o(1) - \sum_{i=1}^t(c_i-1)(1-c_i/r)}.
		\end{align*}
		Note that all summands in the sum in the exponent are non-negative. Further, there is at least one $i_0$ with $c_{i_0} \ge 2$, and since $c_i <r$ for all $i$, we have $(c_{i_0}-1)(1-c_{i_0}/r) \ge 1-2/r$ (taking the minimum over all possible values $2, \dots, r-1$ for $c_{i_0}$). So, the sum in the exponent above is at least $1-2/r$, and hence we may bound the contribution to \eqref{eq:sumbound} from all hyperedges $h_j$ corresponding to the fixed sequence  $c_1, \dots,c_t$ by
		\[
		n^{-1+2/r+o(1)} \le n^{-s+o(1)}.
		\]
		Since there are only finitely many potential sequences  $c_1, \dots, c_t$, this concludes the proof of the lemma, and thereby of Lemma~\ref{lemma:lowdegreenoextra}.
	\end{proof}
	
\qed

\section{Process coupling - Step 1}\label{section:processcoupling1}

In \S\ref{sec_rc_rcoupling} we revisited and analysed the coupling of $G \sim G(n,p)$ and $H \sim H(n, \pi)$ from Theorem~\ref{theorem:riordancoupling}. Building upon this coupling, we now proceed with the first step in coupling the random graph process with the random hypergraph process.

The aim of \S\ref{section:processcoupling1} is to prove Proposition \ref{prop:coupling1}. Roughly speaking, this states that we may couple the random graph process and the random hypergraph process so that there is \emph{almost} a copy of $H_\HitH$ within the $r$-cliques of $G_\HitG$: for all hyperedges in $H_\HitH$ except those in a set $\mc{E}$ (the \emph{exceptional} hyperedges), there is an $r$-clique in $G_\HitG$ on the same vertex set. Moreover, the exceptional hyperedges in $\mc{E}$ all gain a partner hyperedge within a short time window after time~$\HitH$ (which will allow us to identify and deal with them in the next step of the coupling).

\begin{proposition}\label{prop:coupling1}
We may couple the random graph process $(G_t)_{t=0}^N$ and the random hypergraph process $(H_t)_{t=0}^M$ so that whp the following holds. There is a set of hyperedges $\mc{E} \subset H_\HitH$ 
	 so that
	\begin{enumerate}[label={\alph*)}]
		\item $H_\HitH \setminus \mc{E} \subset \cl(G_\HitG)$, and
		\item for every $h_1 \in \mc{E}$ there is a $h_2 \in H_{\HitH+\left \lfloor g(n)n \right \rfloor}\setminus H_\HitH$ so that $|h_1 \cap h_2| =2$. 
\end{enumerate}
\end{proposition}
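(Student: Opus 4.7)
The strategy is to extend the coupling of Theorem~\ref{theorem:riordancoupling} between $G \sim G(n,p_+)$ and $H \sim H(n,\pi_+)$ (with every hyperedge $h \in H$ having a corresponding clique $C_h \subseteq G$) to a coupling of the full processes, by specifying uniform random orderings of the edges of $G$ and the hyperedges of $H$ that align the hitting-time structures. Throughout, I would condition on the high-probability events that $H \notin \mathcal{B}$ (Lemma~\ref{lemma:Hnotbad}), that no low-degree vertex of $H$ is incident to an extra clique in $G$ (Lemma~\ref{lemma:lowdegreenoextra}), and---applying Lemma~\ref{lemma:lowdegreenotbad} to the partner configuration, whose expected count is $O(\log^2 n) = n^{o(1)}$---that no low-degree vertex of $H$ lies in any hyperedge of a partner pair. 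By \eqref{eq:mastercoupling} it then suffices to construct compatible random orderings on $H$ and on the edges of $G$.

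For each non-partner hyperedge $h \in H$, the clique $C_h$ is edge-disjoint from every other $C_{h'}$ by Remark~\ref{remark:vertexdisjoint}. I assign iid uniform labels $U_e \in [0,p_+]$ to the edges $e \in C_h$ and set
\[
V_h \;:=\; \pi_+\,\bigl(\tau_h/p_+\bigr)^{\binom{r}{2}}, \qquad \tau_h \;:=\; \max_{e \in C_h} U_e.
\]
A direct check shows that $V_h$ is uniform on $[0,\pi_+]$, and these $V_h$'s are mutually independent across distinct non-partner hyperedges. Labels on edges of $G$ outside any $C_h$ and on non-edges/non-hyperedges are drawn iid uniform in the corresponding residual intervals, yielding the correct marginal distributions of the processes.

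The main obstacle lies in the $O(\log^3 n)$ partner pairs $\{h_1,h_2\}$ whose cliques share a single edge $e^*$. The plan is to treat the pair asymmetrically: for $h_2$ use the natural coupling $V_{h_2} := \pi_+(\tau_{h_2}/p_+)^{\binom{r}{2}}$, which couples $V_{h_2}$ tightly with $U_{e^*}$; for $h_1$, introduce an independent auxiliary uniform $W_{h_1}\in[0,p_+]$ and set
\[
V_{h_1} \;:=\; \pi_+\,\bigl(\tilde\tau_{h_1}/p_+\bigr)^{\binom{r}{2}},\qquad \tilde\tau_{h_1}:=\max\Bigl(W_{h_1},\,\max_{e\in C_{h_1}\setminus\{e^*\}} U_e\Bigr).
\]
This makes $V_{h_1}, V_{h_2}$ genuine independent uniforms on $[0,\pi_+]$ as required for the hypergraph process, while preserving the coupling between $U_{e^*}$ and the partner label $V_{h_2}$. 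Define $\mathcal{E}:=\{h\in H_\HitH : C_h\not\subseteq G_\HitG\}$.

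For property~(a), the conditioning ensures that both $\HitH$ and $\HitG$ are achieved on the same low-degree vertex $v^*$, which is incident only to non-partner hyperedges and to no extra cliques; since $\tau\mapsto\pi_+(\tau/p_+)^{\binom{r}{2}}$ is increasing, the same hyperedge $h^*\ni v^*$ witnesses both hitting times, so any non-partner $h\in H_\HitH$ has $V_h\le V_{h^*}$ and hence $\tau_h\le\tau_{h^*}=\HitG$, i.e.\ $C_h\subseteq G_\HitG$. Thus $\mathcal E$ contains only partner hyperedges. For property~(b), if $h_1 \in \mathcal{E}$ then by construction the non-shared edges of $C_{h_1}$ and $W_{h_1}$ are all $\le \HitG$ but $U_{e^*} > \HitG$, which forces $V_{h_2} > V_\HitH$. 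Since $U_{e^*}\le p_+$ and $\HitG\ge p_-$ whp (the critical-window bounds of~\S\ref{section:criticalwindow}), one has $U_{e^*} - \HitG \le p_+ - p_-$, and a mean-value computation under the monotone map $V_{h_2} = \pi_+(U_{e^*}/p_+)^{\binom{r}{2}}$ (on the regime where $U_{e^*}$ dominates the non-shared part of $C_{h_2}$) translates this into a gap $V_{h_2}-V_\HitH$ of order $g(n)/n^{r-1}$, comfortably inside the window spanned by $\lfloor g(n)n\rfloor$ additional hyperedge-process steps after $\HitH$. The main technical content of the proof is verifying this quantitative matching of window sizes uniformly over all $O(\log^3 n)$ partner pairs, together with the required concentration of the spacing of the $V_h$'s around $V_\HitH$.
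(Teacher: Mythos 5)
Your construction is essentially the paper's: couple $G(n,p_+)$ and $H(n,\pi_+)$ via Theorem~\ref{theorem:riordancoupling}, time each non-partner hyperedge by the maximum of the labels of its clique's edges, desynchronise one member of each partner pair by replacing the shared edge's label with an independent dummy variable, and let $\mc{E}$ be the set of hyperedges of $H_{\HitH}$ whose cliques are missing at time $\HitG$. Your monotone reparametrisation to uniform marginals on $[0,\pi_+]$ is a cosmetic variant of working directly with the order statistics, and your argument for (b) --- that $h_1\in\mc E$ forces $U_{e^*}$ past the graph threshold, hence the partner $h_2$ lies in $H\setminus H_{\HitH}$, and that only about $2g(n)n/r<\lfloor g(n)n\rfloor$ hyperedges can appear between levels $\pi_-$ and $\pi_+$ --- also matches the paper.

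The gap is the sentence asserting that ``the conditioning ensures that both $\HitH$ and $\HitG$ are achieved on the same low-degree vertex $v^*$''. This is the crux of the whole proof (the paper's Lemma~\ref{lemma:hittingtimesmatch}), and the conditioning events you list do not imply it. Two things are missing. First, you need that the last vertex to lose isolation in the hypergraph process is in fact a low-degree vertex of $H(n,\pi_+)$ --- otherwise Lemmata~\ref{lemma:lowdegreenoextra} and \ref{lemma:lowdegreenotbad} tell you nothing about it; this is a separate first-moment estimate (the paper's Lemma~\ref{lemma:isolatedlowdegree}, comparing isolation at level $\pi_-$ with degree at level $\pi_+$). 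Second, and more seriously, you must rule out that some \emph{other} vertex $w$ is covered by a clique in $G$ later than $v^*$ is: a vertex $w$ all of whose hyperedges up to the critical time are dummy-timed partner hyperedges (your $h_1$'s) can have all of its cliques delayed in $G$ (when the shared edges $e^*$ arrive late), in which case $\HitG$ is determined by $w$ rather than by $v^*$, and the identification of the graph threshold with the hypergraph threshold --- on which both the claim $\mc E\subseteq\{h_1\text{'s}\}$ in (a) and the step ``$U_{e^*}>\HitG$ forces $V_{h_2}>V_{\HitH}$'' in (b) rest --- breaks down. Since such a $w$ need not be low-degree, your conditioning does not exclude this; the paper handles it with Lemma~\ref{lemma:isonotbad} (whp every vertex that is non-isolated at level $\pi_-$ already has a partner-free hyperedge at level $\pi_-$). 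Both missing pieces are routine first-moment computations, but they are genuinely needed and not covered by the events you condition on.
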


The remainder of \S\ref{section:processcoupling1} will be devoted to the proof of Proposition~\ref{prop:coupling1}. In \S\ref{subsection:construction} we start with $G \sim G(n,p_+)$ and $H\sim H(n,\pi_+)$, recalling $p_+, \pi_+$ from \eqref{eq:defcriticalwindow} and \eqref{eq:defcriticalwindow_p}, and coupled as in Theorem~\ref{theorem:riordancoupling}. We then equip $G$ and $H$ with (appropriately coupled) uniform orders $\sigma_G$, $\sigma_H$ of their edges and hyperedges. 

It is clear (e.g.\ by the standard device in Definition \ref{def:mastercoupling}) that $G$ equipped with $\sigma_G$ gives an instance of the first $|G|$ graphs in the random graph process $(G_t)_{t=0}^N$. Similarly, $H$ with $\sigma_H$ can be embedded into the random hypergraph process $(H_t)_{t=0}^M$ as the first $|H|$ hypergraphs. Moreover, note that whp in $G$ every vertex is contained in an $r$-clique, and whp $H$ has minimum degree at least $1$. So whp these initial sequences of the graph and hypergraph processes capture $T_G$ and $T_H$. Thus, the coupling of $\sigma_G, \sigma_H$ will induce a coupling of $G_\HitG$ and $H_\HitH$. Furthermore, it follows from \eqref{eq:mastercoupling} that whp $|H|-\HitH \le (1+o(1))(\pi_+-\pi_-){n \choose r} \sim 2 g(n)n/r < \left \lfloor g(n)n \right \rfloor$. So to prove Proposition~\ref{prop:coupling1}, it is sufficient to prove the following statement.

\begin{proposition}\label{prop:coupling2}
We may construct a coupling $(G, \sigma_G, H, \sigma_H)$ so that all of the following hold.
	\begin{enumerate}[label={\alph*)}]
		\item We have $G \sim G(n,p_+)$, and $\sigma_G$ is a uniformly random order of the edges of $G$.
		\item We have $H \sim H(n,\pi_+)$, and $\sigma_H$ is a uniformly random order of the hyperedges of $H$.
		\item \label{item:prope} Let $e_{\sigma_G(1)}, \dots, e_{\sigma_G(|G|)}$ be the edge order given by $\sigma_G$ and $h_{\sigma_H(1)}, \dots, h_{\sigma_H(|H|)}$ the hyperedge order given by $\sigma_H$. For $t \in \mathbb{N}_0$, set
		\[
		G_t^* = \{e_{\sigma_G(1)}, \dots, e_{\sigma_G(t)}\} \text{ and } H_t^* = \{h_{\sigma_H(1)}, \dots, h_{\sigma_H(t)}\} 
		\]
and define
\begin{align*}
	\HitGS &= \min\{t: \text{ every vertex of $G^*_t$ is in an $r$-clique }\},\\
\HitHS  &= \min\{t: \text{ $H^*_t$ has minimum degree $1$}\};
	\end{align*}
	then whp $\HitGS, \HitHS$ are finite. Furthermore, whp there is a set $\mc{E} \subset H^*_{\HitHS}$ such that
		\begin{equation} \label{eq:e1}
H^*_{\HitHS} \setminus \mc{E}	\subset	\cl(G^*_{\HitGS}),
		\end{equation}
		and so that for every $h_1 \in \mc{E}$ there is a $h_2 \in H \setminus H^*_{\HitHS}$ such that
		\[
		|h_1 \cap h_2|=2.
		\]
			\end{enumerate}
\end{proposition}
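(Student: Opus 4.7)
The plan is to start from the coupling of Theorem~\ref{theorem:riordancoupling} at $p=p_+$, $\pi=\pi_+$, producing $G \sim G(n,p_+)$ and $H \sim H(n,\pi_+)$ with every $h \in H$ represented by a clique $C_h \subseteq G$, and then to equip $G$ and $H$ with carefully correlated uniform orderings. By Lemma~\ref{lemma:Hnotbad} and Lemma~\ref{lemma:lowdegreenoextra}, we may condition on the whp events that $H \notin \mathcal{B}$ and that no low-degree vertex of $H$ is incident to any extra clique. By Remark~\ref{remark:vertexdisjoint}, the hyperedges of $H$ then split into a set $H'$ of non-partnered hyperedges, whose cliques are pairwise edge-disjoint, together with at most $\log^3 n$ vertex-disjoint partner pairs, whose associated cliques share exactly one edge per pair. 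A first-moment bound via Lemma~\ref{lemma:lowdegreenotbad}, applied to the fixed hypergraph $K$ consisting of a partner pair (for which $\expe[X_K] \lesssim \log^2 n = n^{o(1)}$), further gives that whp no low-degree vertex of $H$ is incident to any partnered pair.

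For the orderings, I would use a refinement of the standard coupling of Definition~\ref{def:mastercoupling}. Independently for each $h \in H$ draw $U_h \sim \mathrm{Unif}[0,\pi_+]$; the ascending order of the $U_h$'s gives a uniform $\sigma_H$. Each $e \in G$ then receives a time $V_e \in [0,p_+]$ whose joint distribution is designed so that (a) each $V_e$ is marginally $\mathrm{Unif}[0,p_+]$ (making $\sigma_G$ uniform on orders of $G$), and (b) the clique-completion time $\max_{e \in C_h} V_e$ is tightly tied to $U_h$. Concretely, for each non-partnered $h \in H'$, set $M_h = p_+(U_h/\pi_+)^{1/\binom{r}{2}}$, choose one of the $\binom{r}{2}$ edges of $C_h$ uniformly at random to receive $V_e = M_h$, and sample the remaining $\binom{r}{2}-1$ edges of $C_h$ i.i.d.\ $\mathrm{Unif}[0, M_h]$; a short order-statistics computation yields $V_e \sim \mathrm{Unif}[0, p_+]$ marginally, and edge-disjointness of $\{C_h : h \in H'\}$ makes these assignments independent across $H'$. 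For each partnered pair $\{h_1, h_2\}$ with shared edge $e_0$, tie both cliques' maxima to the later partner, namely $\max_{e \in C_{h_i}} V_e = p_+(\max(U_{h_1}, U_{h_2})/\pi_+)^{1/\binom{r}{2}}$ for $i \in \{1,2\}$, while preserving uniform $V_e$-marginals by routing the excess correlation onto the shared edge and into the symmetric random choice of which edge of each clique realizes the maximum. Edges of $G$ outside every $C_h$ receive i.i.d.\ $\mathrm{Unif}[0, p_+]$ times.

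By the critical-window analysis of \S\ref{section:criticalwindow}, whp $\HitHS$ corresponds to a $U$-threshold $U^* \in (\pi_-,\pi_+)$ and $\HitGS$ to $V^* \in (p_-,p_+)$. The last low-degree vertex $v_0$ to be covered is the same in both processes (by Lemma~\ref{lemma:lowdegreenoextra} together with the first-moment bound ensuring $v_0$ does not lie in any partnered pair), and the covering hyperedge/clique is some non-partnered $h_0 \in H'$; this forces the alignment $V^* = p_+(U^*/\pi_+)^{1/\binom{r}{2}}$. Consequently every non-partnered $h \in H^*_{\HitHS}$ satisfies $\max_{e \in C_h} V_e = M_h \le V^*$, so $C_h \subseteq G^*_{\HitGS}$. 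Setting $\mc{E} := H^*_{\HitHS} \setminus \{h : C_h \subseteq G^*_{\HitGS}\}$, the set $\mc{E}$ contains only partnered hyperedges; for $h_1 \in \mc{E}$ with partner $h_2$, the partnered construction gives $p_+(\max(U_{h_1}, U_{h_2})/\pi_+)^{1/\binom{r}{2}} > V^*$, i.e., $\max(U_{h_1}, U_{h_2}) > U^*$, which combined with $U_{h_1} \le U^*$ yields $U_{h_2} > U^*$ and hence $h_2 \in H \setminus H^*_{\HitHS}$, as required. The main obstacle is realising the partnered-pair construction so that it simultaneously preserves uniform $V_e$-marginals and ties both cliques' maxima to the later of $U_{h_1}, U_{h_2}$; this is a delicate joint distribution on the $2\binom{r}{2}-1$ edges of $C_{h_1} \cup C_{h_2}$, made tractable by the sparsity and vertex-disjointness of partnered pairs guaranteed by $H \notin \mathcal{B}$.
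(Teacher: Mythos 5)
Your overall architecture matches the paper's: start from the Theorem~\ref{theorem:riordancoupling} coupling at $(p_+,\pi_+)$, order edges and hyperedges by correlated ``completion times'', observe that non-partnered hyperedges cause no trouble, and extract $\mc{E}$ from the partner pairs. The non-partnered part of your construction is fine and is essentially the paper's construction read backwards (your $M_h=p_+(U_h/\pi_+)^{1/\binom{r}{2}}$ is a monotone reparametrisation of $\tau(h)=\max_{e\in E(h)}\xi_e$). However, the step you yourself flag as ``the main obstacle'' --- the partnered-pair construction --- is not merely delicate; as specified it is impossible, and this is a genuine gap. You require $\max_{e\in C_{h_1}\cup C_{h_2}}V_e=p_+\bigl(\max(U_{h_1},U_{h_2})/\pi_+\bigr)^{1/\binom{r}{2}}$ with $U_{h_1},U_{h_2}$ independent; the right-hand side is then distributed as the maximum of $2\binom{r}{2}$ i.i.d.\ $\mathrm{Unif}[0,p_+]$ variables (CDF $(x/p_+)^{2\binom{r}{2}}$), whereas the union $C_{h_1}\cup C_{h_2}$ contains only $2\binom{r}{2}-1$ edges, whose maximum under any exchangeable assignment consistent with a uniform edge order has CDF $(x/p_+)^{2\binom{r}{2}-1}$. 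No amount of ``routing correlation onto the shared edge'' reconciles these two distributions; moreover, preserving uniform \emph{marginals} for the $V_e$ is in any case insufficient for $\sigma_G$ to be a uniform order --- you need exchangeability of the whole family of edge times, which your forced equality of the two clique maxima also violates (under exchangeability the shared edge fails to be the overall maximum with positive probability, in which case the two maxima differ).

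The paper resolves exactly this tension by going in the opposite direction: the edge times $\xi_e$ are kept genuinely i.i.d.\ (so $\sigma_G$ is uniform for free), and for each partner pair an extra independent ``dummy'' uniform $\xi_i'$ is introduced on the \emph{hypergraph} side, so that $\tau(v_i)$ uses $\xi_i'$ in place of the shared edge's time. Then each $\tau(h)$ is a maximum of $\binom{r}{2}$ uniforms drawn from pairwise disjoint index sets, making $\sigma_H$ uniform, at the cost that $\tau(v_i)$ and the true clique completion time $\tau(E(v_i))$ may disagree. Crucially, only a one-sided inequality is needed: $\max(\tau(E(u_i)),\tau(E(v_i)))\le\max(\tau(u_i),\tau(v_i))$ (Lemma~\ref{lemma:couplingpartners}), which is all your final deduction about $\mc{E}$ actually uses --- if you replace your demanded equality by this inequality, your argument that $h_1\in\mc{E}$ forces $U_{h_2}>U^*$ goes through. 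Separately, your hitting-time alignment is stated too loosely: you need not only that the last-covered low-degree vertex behaves well, but that \emph{every} vertex that is non-isolated in $H_-$ is already covered by a clique of $G_-$ coming from a non-partnered hyperedge (the paper's Lemma~\ref{lemma:isonotbad}); without this you cannot rule out that $\HitGS$ is delayed by some other vertex whose only covering hyperedges are partnered.
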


\subsection{Proof of Proposition \ref{prop:coupling2}}

We start with the coupled $G, H$ as in Theorem~\ref{theorem:riordancoupling}, with $p=p_+$ and $\pi=\pi_+$. In \S~\ref{subsection:construction}, we define coupled uniform orders $\sigma_G$ of the edges of $G$ and $\sigma_H$ of the hyperedges of $H$. Thereafter, we show that item \ref{item:prope} of Proposition~\ref{prop:coupling2} holds.

The idea of the coupling is the following. If we put a uniform order on the edges of $G$, then for any edge-disjoint set of $r$-cliques, the induced order in which they emerge in the random graph process is uniform by symmetry. The only `obstacle', therefore, are the pairs of partner hyperedges in $H$, and the corresponding edge-overlapping cliques in $G$, which share two vertices (whp there are no higher overlaps). For each pair of partner hyperedges, we put in a `dummy edge', and choose a uniform order of the edges and dummy edges. For the hyperedges (cliques), we take the induced order where each partner in a pair of partner hyperedges uses a distinct edge (or dummy edge), which by symmetry is uniform. This will be made more precise below.

\subsubsection{Auxiliary time and construction of $\sigma_G$ and $\sigma_H$}\label{subsection:construction}

In the following, we will want to compare the times at which events happen in the graph process and in the hypergraph process --- for example, to check that the graph and hypergraph hitting times match whp in some natural way, or happen `at the same time'. For a convenient notion of what this means, we rescale time appropriately, introducing an \emph{auxiliary time} in the interval $[0,1]$ (this will be made more precise below). Every edge $e \in G$ and hyperedge $h \in H$ will be assigned a time $\tau(e)$ and $\tau(h)$ in the interval $[0,1]$, so that $\sigma_G$ is the edge order given by ordering the edges in ascending order according to $\tau(e)$, and $\sigma_H$ is the hyperedge order given by ordering the hyperedges in ascending order according to $\tau(h)$. This auxiliary time will allow us to compare when events happen in the graph and hypergraph process.

We now describe the construction of $\sigma_H$, $\sigma_G$. Note that, whp, the coupling of $H$ and $G$ does not fail and we have $H \notin \mc{B}$. In particular, whp we have $H \subset \cl(G)$, no two hyperedges of $H$ overlap in three or more vertices, no hyperedge has more than one partner hyperedge, and there are at most $\log^3 n$ pairs of partner hyperedges in total (see Remark~\ref{remark:vertexdisjoint}). As these properties hold whp, it suffices to specify the construction of $\sigma_G$ and $\sigma_H$ if they hold, which we assume from now on.

So denote the pairs of partner hyperedges of $H$ in some arbitrary order by  $(u_1, v_1), \dots, (u_k, v_k)$, where $k \le \log^3 n $. 
Set
\begin{align*}
S_1 = \{u_1, \dots, u_k\} \quad  \text{ and } \quad S_2 =\{v_1, \dots, v_k\}.
\end{align*}
 For each $h \in H$, let $E(h)\subset G$ be the edge set of the clique in $G$ on the same vertex set. For $i=1, \dots, k$, let $e_i^*$ be the shared edge of the partner hyperedges $(u_i, v_i)$, that is, $\{e_i^*\} = E(u_i) \cap E(v_i)$.

Now to construct the uniform orders, let
\[
 \{\xi_e : e \in G\} \cup \{\xi'_i: 1 \le i \le k\} 
\]
be uniform random variables on $[0,1]$, independent from each other and everything else. Almost surely, all their values are all distinct, which we assume from now on.

For $e \in G$, let
\[\tau(e) = \xi_e  \in [0,1]\]
be the auxiliary time of $e$, and order the edges in $G$ by the value of  $\tau(e)$ in increasing order. This defines the order $\sigma_G$, which clearly is uniform by symmetry. 

For $h \in H \setminus S_2$, let
\[
\tau(h) = \max \{ \xi_e: e \in E(h)\} 
\]
and for $h=v_i \in S_2$, let
\[
\tau(v_i) = \max \Big( \{ \xi_e: e \in  E(v_i) \setminus e_i^*\} \cup \{\xi'_i\} \Big).
\]
Now order the hyperedges of $H$ by the size of the auxiliary times $\tau(h)$ in increasing order, defining the order $\sigma_H$.  As the times $\tau(h)$ each depend on disjoint sets of ${r \choose 2}$ i.i.d.\ random variables $\xi_e$, by symmetry $\sigma_H$ is uniform.

\subsubsection{Induced clique order versus hyperedge order} \label{section:induced_clique}

The edge order in the graph process induces a (partial) order of the clique edge sets $\{E(h): h \in H\}$, given by the auxiliary times
\[
\tau(E(h)) := \max \{\tau(e): e \in E(h) \} \in[0,1].
\]
In the case of partner hyperedges $u_i, v_i$, the corresponding cliques may appear at the same time $\tau(E(u_i))=\tau(E(v_i))$ --- this happens if the shared edge $e_i^*$ is the last to appear, i.e.\ if $\tau(e_i^*) = \max \{\tau(e): e \in E(u_i) \cup E(v_i) \}$.

It is clear from the definition that for all $h \in H \setminus S_2$, we have
\begin{equation} \label{eq:orderprop}
\tau(h) = \tau(E(h)),
\end{equation}
so $h$ and its corresponding clique appear at the same (auxiliary) time. However, for $h \in S_2$, $\tau(h)$  and $\tau(E(h))$ do not always agree, and so the order of the hyperedges of $H$ differs slightly from the induced clique order in $G$. 

The crucial property of our construction is that we maintain some control over when pairs of partner hyperedges appear; we formulate the following key observation as a lemma.

\begin{lemma}\label{lemma:couplingpartners}
For all $i=1, \dots, k$, 
\[
\max(\tau(E(u_i)) , \tau (E(v_i)) )\le \max (\tau(u_i), \tau(v_i)).
\]
That is, if both $u_i$ and $v_i$ are present in the hypergraph process, then both of the corresponding cliques with edge sets $E(u_i), E(v_i)$ are present in the coupled graph process as well (in the auxiliary time correspondence given by $\tau(\cdot)$). \qed
\end{lemma}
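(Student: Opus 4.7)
The claim is a direct unpacking of the definitions of $\tau(\cdot)$ given in \S\ref{subsection:construction}, so my plan is simply to track which $\xi$-values contribute to each side of the inequality and observe that the right-hand side dominates term by term.

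\medskip

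First I would record that since $u_i\in S_1$ and $u_i\notin S_2$ (recall from Remark~\ref{remark:vertexdisjoint} that whp each hyperedge has at most one partner so the pairs $(u_i,v_i)$ can be taken vertex-disjoint from each other and $S_1\cap S_2=\emptyset$), the identity \eqref{eq:orderprop} applies to $u_i$ and gives
\[
\tau(u_i)\;=\;\tau(E(u_i))\;=\;\max\{\xi_e:e\in E(u_i)\}.
\]
For $v_i\in S_2$, by construction
\[
\tau(v_i)\;=\;\max\bigl(\{\xi_e:e\in E(v_i)\setminus\{e_i^*\}\}\cup\{\xi'_i\}\bigr),
\qquad
\tau(E(v_i))\;=\;\max\{\xi_e:e\in E(v_i)\},
\]
so $\tau(v_i)$ is obtained from $\tau(E(v_i))$ by \emph{replacing} the contribution $\xi_{e_i^*}$ with $\xi'_i$.

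\medskip

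Next I would combine these observations. Since $e_i^*\in E(u_i)\cap E(v_i)$, the value $\xi_{e_i^*}$ is already accounted for on the right-hand side through $\tau(u_i)$. Therefore
\[
\max\bigl(\tau(u_i),\tau(v_i)\bigr)
\;=\;\max\bigl(\{\xi_e:e\in E(u_i)\cup E(v_i)\}\cup\{\xi'_i\}\bigr),
\]
whereas
\[
\max\bigl(\tau(E(u_i)),\tau(E(v_i))\bigr)
\;=\;\max\{\xi_e:e\in E(u_i)\cup E(v_i)\}.
\]
The former is a maximum over a superset of the latter, so the claimed inequality follows.

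\medskip

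There is no real obstacle here: the whole point of introducing the auxiliary variable $\xi'_i$ for each pair of partner hyperedges was exactly to ensure that, on the hypergraph side, we never use the shared edge's arrival time $\xi_{e_i^*}$ to decide when \emph{both} $u_i$ and $v_i$ appear. The asymmetry between the two partners (only $v_i$ gets the dummy $\xi'_i$, while $u_i$ still uses $\xi_{e_i^*}$) is precisely what keeps the right-hand side at least as large as the left-hand side, yielding the lemma. The only care needed is to verify that the pairs $(u_i,v_i)$ are well-defined and that each hyperedge belongs to at most one such pair, which is exactly the content of Remark~\ref{remark:vertexdisjoint} under $H\notin\mathcal{B}$.
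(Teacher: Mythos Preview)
Your proof is correct and is essentially the same approach as the paper's: the paper states this lemma with an immediate \qed, treating it as a direct consequence of the definitions in \S\ref{subsection:construction}, and your argument simply spells out that unpacking. The key identity $\max(\tau(u_i),\tau(v_i))=\max\bigl(\{\xi_e:e\in E(u_i)\cup E(v_i)\}\cup\{\xi'_i\}\bigr)$, obtained because $e_i^*\in E(u_i)$, is exactly the point.
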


\subsubsection{Analysis of the coupled hitting times} \label{subsection:hittingtimes}

In this section we show that, in auxiliary time, the hitting times of the graph process and the hypergraph process match whp. First of all, note that by our choice of $p=p_+$ and $\pi=\pi_+$, whp the hitting times $\HitGS, \HitHS$ as defined in \ref{item:prope} of Proposition~\ref{prop:coupling2} are finite (because $p_+, \pi_+$ are at the end of the `critical window', see \S\ref{section:criticalwindow}). Now for $t \in [0,1]$, let 
\begin{equation} \label{eq:defHt}
H(t) = \{h \in H: \tau(h) \le t\}
\end{equation}
and 
\begin{equation} \label{eq:defGt}
G(t) = \{e \in G: \tau(e) \le t\},
\end{equation}
and define
\begin{align}
t_G &= \inf \{t \in[0,1]: \text{in $G(t)$ every vertex is contained in at least one $r$-clique} \}, \label{eq:deftG} \\
t_H &= \inf\{t \in [0,1]: \text{$H(t)$ has minimum degree at least $1$} \}. \label{eq:deftH}
\end{align}
Then whp $t_G$ and $t_H$ are finite, and in fact $G(t_G)$ and $H(t_H)$ are exactly $G^*_{\HitGS}$ and $H^*_{\HitHS}$.

\begin{lemma}\label{lemma:hittingtimesmatch}
	Whp, $t_G = t_H$.
\end{lemma}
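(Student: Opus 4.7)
The plan is to derive both $t_G \le t_H$ and $t_G \ge t_H$ by controlling the only source of discrepancy between the auxiliary times $\tau(h)$ and $\tau(E(h))$, namely partner hyperedges in $S_2$ (cf.~\eqref{eq:orderprop} and Lemma~\ref{lemma:couplingpartners}). The key preliminary is that whp \emph{no ``late-covered'' vertex lies in a partner hyperedge}. Writing $\tau_H(v) := \min_{h \ni v} \tau(h)$ and $t_- := (\pi_-/\pi_+)^{1/\binom{r}{2}}$ for the auxiliary time matching density $\pi_-$, I would show:
\begin{enumerate}[label={(\roman*)}]
\item whp every vertex $v$ with $\tau_H(v) \ge t_-$ has final degree $d_H(v) \le 7g(n)$, by a Chernoff bound (the expected additional degree in the density window $(\pi_-,\pi_+]$ is $\sim 2g(n)$, and the standard argument via independence of the edges containing $v$ yields the bound);
\item whp no low-degree vertex of $H$ lies in a partner hyperedge, by Lemma~\ref{lemma:lowdegreenotbad} applied to the fixed configuration ``two hyperedges sharing exactly two vertices'', whose expected count in $H(n,\pi_+)$ is $O(\log^2 n) = n^{o(1)}$.
\end{enumerate}
Combining (i) and (ii), whp every vertex lying in any partner pair satisfies $d_H(v) > 7g(n)$ and hence $\tau_H(v) < t_-$.

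For $t_G \ge t_H$: let $v^*$ realise $\tau_H(v^*) = t_H$. Since whp $t_H \ge t_-$, by (i)--(ii) $v^*$ is low-degree and lies in no partner pair, so $\tau(E(h)) = \tau(h)$ for every $h \ni v^*$ by \eqref{eq:orderprop}. Lemma~\ref{lemma:lowdegreenoextra} further rules out extra cliques at $v^*$, so the first $r$-clique of $G(\cdot)$ covering $v^*$ arrives at time exactly $\min_{h \ni v^*} \tau(E(h)) = \tau_H(v^*) = t_H$, whence $t_G \ge t_H$.

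For $t_G \le t_H$: I need an $r$-clique in $G(t_H)$ at every vertex $v$. If $v$ lies in no partner pair, the hyperedge $h$ achieving $\tau_H(v) \le t_H$ is in $H \setminus S_2$ and produces such a clique via $\tau(E(h)) = \tau(h)$. If instead $v$ lies in a partner pair, then by the preliminaries $d_H(v) > 7g(n)$, and since $v$ lies in at most one hyperedge of $S_2$ (Remark~\ref{remark:vertexdisjoint}), it lies in at least $7g(n)-1$ hyperedges of $H \setminus S_2$. Two distinct non-$S_2$ hyperedges share at most one vertex (else they would form a second partner pair, forbidden by $H \notin \mc{B}$); hence their edge sets are disjoint, so the corresponding values $\tau(h) = \max_{e \in E(h)} \xi_e$ are independent, each with distribution function $t^{\binom{r}{2}}$. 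The probability that all $7g(n)-1$ of them exceed $t_-$ is therefore $(1 - \pi_-/\pi_+)^{7g(n)-1} = (2g(n)/\log n + o(1))^{7g(n)-1}$, which decays as $\exp\bigl(-\Theta(g(n)\log\log n)\bigr)$ under $g(n) = o(\log n/\log\log n)$. Union-bounding over the $O(\log^3 n)$ vertices lying in partner pairs gives $o(1)$, so whp every such $v$ admits some $h \in H \setminus S_2$ with $v \in h$ and $\tau(h) \le t_- \le t_H$, supplying the required clique $E(h) \subset G(t_H)$.

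The principal obstacle is this last case, where partner-pair vertices break the correspondence $\tau(h) = \tau(E(h))$. The argument succeeds because partner pairs are rare ($O(\log^3 n)$) and, by the preliminaries, their endpoints are covered by many non-partner hyperedges well before the critical window opens, making the failure probability super-polynomially small in $g(n)\log\log n$ and beating the polylogarithmic union bound.
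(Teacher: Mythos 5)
Your proof is correct and follows the paper's strategy in its essentials: the same rescaled threshold $t_-=F^{-1}(\pi_-/\pi_+)=(\pi_-/\pi_+)^{1/\binom r2}$, the same trio of facts that late-covered vertices are low-degree (your (i) is Lemma~\ref{lemma:isolatedlowdegree}), that low-degree vertices avoid partner hyperedges (Lemma~\ref{lemma:lowdegreenotbad}) and extra cliques (Lemma~\ref{lemma:lowdegreenoextra}), and the conclusion that the last vertex covered in $H$ is covered in $G$ at exactly the same auxiliary time. The one place where you genuinely diverge is the direction $t_G\le t_H$ for vertices lying in partner pairs. The paper handles these via Lemma~\ref{lemma:isonotbad}, a global first-moment bound: the expected number of vertices that sit in a partner pair but are otherwise isolated at density $\pi_-$ is $O\bigl(n^{2r-2}\pi_-^2e^{-\pi_-\binom{n-1}{r-1}}\bigr)=n^{-1+o(1)}$. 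You instead condition on $H$, use that partner-pair vertices have degree $>7g(n)$ (forced by your (ii)), observe that their $\ge 7g(n)-1$ non-$S_2$ hyperedges have pairwise disjoint edge sets and hence independent appearance times, and union-bound over the $O(\log^3 n)$ partner-pair vertices. Both arguments are valid; the paper's is a one-line expectation computation over all vertices, while yours exploits the structural information already assembled (high degree of partner-pair vertices, edge-disjointness from $H\notin\mcl B$) and makes transparent exactly why rarity of partner pairs saves the day.

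Two minor points, neither a gap. First, your claimed decay rate $\exp(-\Theta(g(n)\log\log n))$ for $(1-\pi_-/\pi_+)^{7g(n)-1}$ is not accurate across the whole admissible range of $g$: for $g$ close to $\log n/\log\log n$ one only gets $\exp(-\Theta(g\log\log\log n))$, since $-\log(2g/\log n)$ can be as small as $\Theta(\log\log\log n)$. The conclusion survives --- in that regime $g$ is so large that $g\log\log\log n\gg\log\log n$ anyway, so the bound still beats the $\log^3 n$ union bound --- but the exponent should be stated as $(7g-1)\bigl(\log\log n-\log(2g)\bigr)$ and checked in both regimes. Second, when invoking independence of the $\tau(h)$ you should make explicit that you are conditioning on $H$ (the partner structure and degrees are $H$-measurable) and that the $\xi_e$ are independent of $H$ by construction; as written this is implicit but the argument is sound.
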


\begin{proof}[Proof of Lemma \ref{lemma:hittingtimesmatch}]
Our overall strategy is as follows: We will first define an auxiliary time $t_-\in[0,1]$ so that $H_-:=H(t_-)$ has essentially the distribution $H(n, \pi_-)$.
Recall that $\pi_-$ was defined in \eqref{eq:defcriticalwindow} as the beginning of the `critical window' for $H$ having minimum degree at least $1$. In particular whp $H(n, \pi_-)$ and therefore $H_-$ has at least one isolated vertex and so $t_- < t_G$.

We then consider the set $\mathcal I \neq \emptyset$ of isolated vertices of $H_-$, and prove that, whp, (a) $\mc{I}$ is exactly the set of vertices not contained in any $r$-clique in $G_-:= G(t_-)$, and (b) all vertices in $\mathcal{I}$ get their first hyperedge (in the hypergraph process) and their first $r$-clique (in the graph process) at the same auxiliary time in $[0,1]$. This implies $t_G=t_H$.

Turning to the details, let $F(y)= P(Y \le y)$ be the distribution function of the random variable $Y=\max_{1\le i \le {r \choose 2} }U_i$, where the random variables $U_i$, $1\le i \le {r \choose 2}$, are independent and uniformly distributed on $[0,1]$. Set
\[
t_-=F^{-1}(\pi_-/\pi_+), \quad \text{and} \quad H_- = H(t_-).
\]
Then by the construction of auxiliary time and the graphs $H(t)$, $t \in [0,1]$, given some $H=H(1) \notin \mc B$, any hyperedge $h \in H$ is also in $H_{-}$ with probability exactly $\pi_-/\pi_+$. So, since $H \sim H(n, \pi_+)$ and whp $H \notin \mc{B}$, essentially $H_{-}$ is distributed as $H(n, \pi_-)$. More specifically, $H_{-}$ can be coupled with the hypergraph $H_-' \sim H(n, \pi_-)$ so that 
\begin{equation} \label{eq:hminuscoupling}
H_- = H_-' \text{ whp.}
\end{equation}
In particular, whp $H_-$ contains isolated vertices and so whp $t_- < t_G$. Let
\[
\mc{I} = \{v \mid v \text{ is an isolated vertex in }H_-\},
\]
then whp $\mc{I} \neq \emptyset$.

Consider the following two lemmata. Their proofs are routine applications of the first moment method and will be given in the appendix. Recall that we called a vertex in a hypergraph low-degree if $d(v) \le 7 g(n)$  (where $g(n)$ is the function from \eqref{eq:defcriticalwindow} which we fix throughout). Let $S$ be the set of hyperedges of $H$ with a partner hyperedge (another hyperedge overlapping in exactly two vertices). So in the notation of \S \ref{subsection:construction}, we have $S=S_1 \cup S_2$. 

\begin{lemma} \label{lemma:isolatedlowdegree}
	Whp all isolated vertices in $H_-$ are low-degree vertices in $H$.
\end{lemma} 
\begin{lemma} \label{lemma:isonotbad}
	Whp all non-isolated vertices in $H_-$ are incident with at least one hyperedge from $H \setminus S$ in $H_-$.
\end{lemma}

Suppose that the whp-statements of Lemmata \ref{lemma:lowdegreenoextra}, \ref{lemma:isolatedlowdegree} and \ref{lemma:isonotbad} all hold, and that $H \notin \mc{B}$. Further, note that it follows from Lemma~\ref{lemma:lowdegreenotbad} that whp no low-degree vertex of $H$ is incident with any hyperedges from $S$, and in the case $r=3$ with any clean $3$-cycles, since the expected numbers of pairs of partner hyperedges and of clean $3$-cycles are $\Theta(\log^2 n)$ and $\Theta(\log^3 n)$, respectively --- so suppose that these whp-statements also hold. 

Then the following is true for any $v \in \mc{I}$. By Lemma \ref{lemma:isolatedlowdegree}, $v$ is a low-degree vertex in $H$. By Lemma~\ref{lemma:lowdegreenoextra}, in the coupled graph $G$, $v$ is not included in any extra cliques, so any $r$-cliques containing $v$ are represented by hyperedges in $H$. Furthermore, by Lemma \ref{lemma:lowdegreenotbad}, $v$ is not contained in any hyperedges which have a partner hyperedge. Recall that in the construction in \S\ref{subsection:construction}, any hyperedge of $H$ which does \emph{not} have a partner hyperedge is assigned the same auxiliary time as the corresponding $r$-clique in $G$ (see \eqref{eq:orderprop}). So all hyperedges incident with $v$ have the same auxiliary times as the corresponding $r$-cliques of $G$.  In particular, $v$ is not contained in any $r$-cliques in $G_-=G(t_-)$.

So none of the vertices in $\mc{I}$ are contained in any $r$-cliques in $G_-$; and for any hyperedge $h \in H$ which meets $\mc{I}$, $\tau(h)=\tau(E(h))$. Furthermore, $G_-$ does noes not have any other vertices $v \notin \mc I$ not contained in any $r$-cliques: By the whp-statement of Lemma \ref{lemma:isonotbad}, any $v \notin \mc{I}$ is incident with a hyperedge $h\in H_-\setminus S$, which implies $\tau(h)=\tau(E(h))$, and therefore $v$ is contained in the clique $E(h)$ in $G_-$.

In summary, $\mc{I}$ is exactly the set of vertices not contained in any $r$-clique in $G_-$, and for any hyperedge $h \in H$ meeting $\mc{I}$, $\tau(h) = \tau(E(h))$. This implies $t_G=t_H$.
\end{proof}

\subsubsection{Finishing the proof of Proposition \ref{prop:coupling2}} 
Proposition \ref{prop:coupling2} now follows easily from the construction of $\sigma_G$ and $\sigma_H$ in \S\ref{subsection:construction} and Lemma~\ref{lemma:hittingtimesmatch}: Let
\begin{equation}\label{eq:defE}
\mathcal{E}=H^*_{\HitHS} \setminus \mathrm{cl}(G^*_{\HitGS}) = H(t_H) \setminus \mathrm{cl} (G(t_G)), 
\end{equation}
then by definition $\mathcal{E}$ satisfies \eqref{eq:e1}. By Lemma~\ref{lemma:hittingtimesmatch}, whp $t_H=t_G$, which we assume from now on, and in particular $\mathcal{E}=H(t_H) \setminus \mathrm{cl} (G(t_H))$. By \eqref{eq:orderprop}, for all $h \in H(t_H) \setminus S_2$ we have
\[
\tau(h) = \tau(E(h)),
\]
so for any such $h$ the corresponding clique with edge set $E(h)$ is present in $G(t_H)$. Hence, we have $\mathcal{E} \subset S_2$.  

So suppose that the hyperedge $v_i \in S_2$ is in $\mathcal{E}$, and let $u_i \in S_1$ be the partner hyperedge of $v_i$. To prove \ref{item:prope}, we need to show that $u_i \in H \setminus H(t_H)$. So suppose that is not the case, that is, suppose that $u_i \in H(t_H)$. But then both $u_i$ and $v_i$ are in $H(t_H)$, which implies
\[
\max(\tau(u_i), \tau(v_i)) \le t_H = t_G.
\]
It follows by Lemma~\ref{lemma:couplingpartners} that
\[
\tau(E(v_i)) \le \max\big(\tau(E(u_i)), \tau(E(v_i))\big) \le \max(\tau(u_i), \tau(v_i)) \le t_G,
\]
which implies $v_i \in \mathrm{cl}(G(t_G))$. But $v_i \in \mc{E}$, so this contradicts the definition \eqref{eq:defE} of $\mathcal{E}$. It follows that $u_i \in H \setminus H(t_H)$, giving \ref{item:prope}. This concludes the proof of Proposition~\ref{prop:coupling2}. \qed

\section{Process coupling - Step 2}\label{section:step2}

The aim of this section is to show that, whp, we can embed the set $\mc{E}$ of `exceptional' hyperedges from Proposition~\ref{prop:coupling1} into a random set $\mc{R}$ which includes every $h \in H_\HitH$ independently with a small probability.

\begin{proposition}\label{prop:couplingrandomset}
 
We may couple the random $r$-uniform hypergraph process $(H_t)_{t=0}^M$ and a set $\mc{R}\subset {[n] \choose r}$ of hyperedges so that both of the following properties hold.
\begin{enumerate}[label={\alph*)}]
	\item \label{randomset1} We have $\mc{R} \subseteq H_\HitH$, and (given only $H_\HitH$) each hyperedge $h \in H_\HitH$ is included in $\mc{R}$  independently with probability
	\begin{equation}\label{eq:pirasymptotics}
	\pi_\mc{R} = \frac{10r^4 g(n)}{n}.
	\end{equation}
	\item Let $\mc{F} \subset H_\HitH$ be the set of hyperedges in $H_\HitH$ which have a partner hyperedge in
	\[H_{\HitH+\left \lfloor g(n)n \right \rfloor} \setminus H_{\HitH}. \]
	Then, whp,
	\begin{equation}\label{eq:FsubsetR}
	\mc{F} \subset \mc{R}.
	\end{equation}
	\end{enumerate}
\end{proposition}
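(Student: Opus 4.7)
The plan is to construct $\mc{R}$ in two stages. First, using the continuation of the hypergraph process past time $\HitH$, we build an intermediate set $\mc{R}_0\subseteq H_\HitH$ so that $\mc{F}\subseteq\mc{R}_0$ by design and each $h\in H_\HitH$ lies in $\mc{R}_0$ with marginal probability exactly $\pi_{\mc{R}}$. Second, we show that the joint law of $(\fctind[h\in\mc{R}_0])_{h\in H_\HitH}$ is close in total variation to the product measure $\mathrm{Bernoulli}(\pi_{\mc{R}})^{\otimes H_\HitH}$, and then couple with truly i.i.d.\ coins to extract $\mc{R}$.

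\textbf{Construction of $\mc{R}_0$.} By Lemma~\ref{lemma:Hnotbad} we may condition on $H_\HitH=H_0\notin\mc{B}$. Given $H_\HitH=H_0$, the hyperedges appearing after time $\HitH$ form a uniformly random order $\sigma$ of $\binom{[n]}{r}\setminus H_0$. For each $h\in H_0$, set $P^*(h):=\{h'\in\binom{[n]}{r}\setminus H_0 : |h\cap h'|=2\}$ and let $\tau_h$ be the first position at which $\sigma$ lands in $P^*(h)$, with $\tau_h=\infty$ otherwise; thus $h\in\mc{F}$ iff $\tau_h\le W:=\lfloor g(n)n\rfloor$. For each $h\in H_0$ choose an integer threshold $T_h$ together with an independent boundary randomisation so that
\[
\mc{R}_0:=\{h\in H_0:\tau_h\le T_h\text{ with the boundary rule}\}
\]
satisfies $\Pr[h\in\mc{R}_0\mid H_\HitH]=\pi_{\mc{R}}$ exactly. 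A direct computation using $|P^*(h)|\sim\binom{r}{2}\binom{n-r}{r-2}$ yields
\[
\Pr[\tau_h\le W\mid H_\HitH=H_0]\lesssim\frac{r^2(r-1)^2g(n)}{2n}<\frac{10r^4g(n)}{n}=\pi_{\mc{R}},
\]
valid for every $r\ge 3$ and $n$ large; hence $T_h\ge W$, so $\mc{F}\subseteq\mc{R}_0$ holds with probability one.

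\textbf{Passage to i.i.d.\ Bernoulli.} Suppose we have shown that, conditionally on $H_\HitH=H_0\notin\mc{B}$, the joint law of $(\fctind[h\in\mc{R}_0])_{h\in H_0}$ is within total variation $o(1)$ of $\mathrm{Bernoulli}(\pi_{\mc{R}})^{\otimes H_0}$. Then the standard total variation coupling yields i.i.d.\ Bernoulli$(\pi_{\mc{R}})$ random variables $(C_h)_{h\in H_0}$ satisfying $C_h=\fctind[h\in\mc{R}_0]$ simultaneously for all $h$ with probability $1-o(1)$. Setting $\mc{R}:=\{h\in H_\HitH:C_h=1\}$ then produces a coupling of $(H_t)_{t=0}^M$ with $\mc{R}$ where (a) holds by construction (the $C_h$ are i.i.d.\ Bernoulli$(\pi_\mc{R})$ given $H_\HitH$) and (b) holds because whp $\mc{R}=\mc{R}_0\supseteq\mc{F}$.

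\textbf{Main obstacle.} The crux of the argument is the total variation estimate, where two layers of dependence must be controlled. The first, the sampling-without-replacement structure of $\sigma$, is standard: the first $T:=\max_h T_h=O(g(n)n)$ positions of $\sigma$ can be coupled with independent Bernoulli$(T/M')$ inclusions on $\binom{[n]}{r}\setminus H_0$ at a total variation cost of $o(1)$ for $r\ge 3$, where $M'=|\binom{[n]}{r}\setminus H_0|=\Theta(n^r)$. The second and more delicate layer is the correlation caused by pairs $h\neq h'\in H_0$ with $P^*(h)\cap P^*(h')\neq\emptyset$. Here I would exploit $H_0\notin\mc{B}_2$ (ruling out avoidable configurations, which strongly constrains the intersection structure of nearby hyperedges) together with $\mc{B}_4$ (few partner pairs in $H_0$), and perform a first-moment computation bounding the expected number of triples $(h,h',h'')$ with $h\neq h'\in H_0$, $h''\in P^*(h)\cap P^*(h')$, and $h''$ arriving among the first $T$ positions of the continuation. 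On the high-probability event that no such conflicting triple exists, the indicators $\fctind[h\in\mc{R}_0]$ become functions of disjoint independent Bernoulli variables and are therefore jointly independent Bernoulli$(\pi_{\mc{R}})$, giving the target product measure up to a further $o(1)$ correction from the conditioning.
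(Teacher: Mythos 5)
Your construction takes a genuinely different route from the paper's, and its overall architecture (force $\mc F\subseteq\mc R_0$ by calibrating a threshold $T_h\ge W$, then trade approximate independence for exact i.i.d.\ coins via a total-variation coupling) is viable; your marginal computation and the first-moment bound on ``conflicting triples'' $(h,h',h'')$ are both correct and do what you need. The paper avoids the total-variation step entirely: it generates the continuation of the process from an auxiliary \emph{binomial} set $I$ (each $h\notin H_{\HitH}$ included independently with probability $\pi_I=10g(n)r!/n^{r-1}$), placed in uniform order at the front of the remaining sequence, and it replaces your $P^*(h)$ by the set $\mc X_h$ of \emph{exclusive} potential partners (those not shared with any other $h'\in H_{\HitH}$). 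Since the $\mc X_h$ are pairwise disjoint and $I$ is a product measure, the events $\{\mc X_h\cap I\ne\emptyset\}$ are \emph{exactly} independent, each with probability $\pi_h\le\pi_{\mc R}$, and one only has to boost each up to $\pi_{\mc R}$; the discrepancy between exclusive and arbitrary partners is absorbed into the whp event that $H_{\HitH}\cup I$ contains no avoidable configuration. This buys exact independence with no distributional approximation, at the price of having to check $H_{\HitH+\lfloor g(n)n\rfloor}\setminus H_{\HitH}\subseteq I$ whp.

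One step of your sketch is false as stated and needs repair. The first $T$ positions of $\sigma$ form a uniformly random $T$-subset of $\binom{[n]}{r}\setminus H_0$, which has exactly $T$ elements, whereas an i.i.d.\ Bernoulli$(T/M')$ subset of the same ground set has $\mathrm{Bin}(M',T/M')$ elements; the total-variation distance between these two random subsets is therefore $1-O(T^{-1/2})\to1$, not $o(1)$. What is true, and what you actually need, is the corresponding statement for the \emph{trace} on $A:=\bigcup_{h\in H_0}P^*(h)$, which has size $O(n^{r-1}\log n)=o(M')$: there the comparison reduces to hypergeometric versus binomial counts and the distance is indeed $o(1)$. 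You also need slightly more than membership in the first $T$ draws, since $\{\tau_h\le T_h\}$ depends on arrival \emph{positions} and the $T_h$ differ across $h$; this is still manageable (compare the induced arrival process on $A$ with a Bernoulli arrival process, or use a single threshold and push the calibration into the boundary randomisation and a final boosting step), but it is exactly the kind of bookkeeping that the paper's exclusive-partner/binomial-set device is designed to avoid.
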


\begin{proof}
We start by constructing the random hypergraph process $H_0, H_1, \dots, H_\HitH$ up to the hitting time $\HitH$ as usual.
For each $h \in H_\HitH $, there are at  most
\[
{r \choose 2}{n-r \choose r-2}  
\]
potential hyperedges which could be a partner of $h$ and which are not present in $H_\HitH$. 
Of those potential partner hyperedges, there may be some

which would, at the same time, be a partner to another hyperedge $h' \in H_\HitH$.

Let $\mc{X}_h$ be the set of potential \emph{exclusive} partner hyperedges which $h$ would not share with any other hyperedge in $H_\HitH$, and set
\begin{equation}\label{eq:boundxh}x_h = |\mc X_h| \le {r \choose 2}{n-r \choose r-2}\le\frac{r(r-1)}{2(r-2)!}n^{r-2}. \end{equation} 

We first define an auxiliary random hyperedge set $I$, and use this to construct the remaining hypergraph process and $\mc R$. So let $I$ be a set of hyperedges in which we include every $h \notin H_\HitH$ independently with probability
\[
\pi_I = \frac{10 g(n) r!}{n^{r-1}} .
\]
We realise the rest of the random hypergraph process by putting a uniform order on the hyperedges in $I$ and including them in the random hypergraph process in that order (yielding $H_{\HitH+1}, \dots, H_{\HitH+|I|}$), and then including the remaining hyperedges in a uniform order. Then by symmetry the overall hyperedge order is uniform.

Now we turn to the construction of $\mc R$. Let
\begin{equation}\label{eq:defRprime}
\mc R' = \{h \in H_\HitH 
\mid \mc X_h \cap I \neq \emptyset\}
\end{equation}
be the set of hyperedges with an exclusive partner hyperedge in $I$.
Note that by \eqref{eq:boundxh}, every $h\in H_\HitH$ is included in $\mc R'$ independently with probability exactly
	\begin{equation}\label{eq:pihasymptotics}
	\pi_h = 1- (1-\pi_I)^{x_h} \sim \pi_I x_h \le 5 r^2 (r-1)^2 g(n) /n,
	\end{equation}
independently of the other hyperedges. Note that  $\pi_h \le \pi_\mc{R}$ by \eqref{eq:pirasymptotics}, so to construct the set $\mc R$ from $\mc R'$ we only need to increase the probability of including each $h$ slightly. We do this in a standard way: for each $h \in H_\HitH$:
\begin{itemize}
\item If $h \in \mc R'$ (which happens with probability $\pi_h$), include $h$ in $\mc R$.
\item If $h \notin \mc R'$, include $h$ in $\mc R$ with probability $\pi_h'$ defined by
		\begin{equation*}
        \pi'_h=\frac{\pi_\mc{R}-\pi_h}{1-\pi_h}
		\end{equation*}
		independently of everything else.
\end{itemize}
Then each $h \in H_\HitH$ is included in $\mc R$ independently with probability exactly $\pi_\mc{R}$.

It only remains to show that whp \eqref{eq:FsubsetR} holds. First note that, since whp $\HitH = O(n \log n)$, $|I| \sim \mathrm{Bin}({n \choose r}-\HitH, \pi_I)$ has expectation $(1+o(1))10g(n)n \rightarrow \infty$, and so whp $|I| \ge ng(n)$. Thus by construction of the hypergraph process, whp
\begin{equation}\label{eq:diffinI}
H_{\HitH+\left \lfloor g(n)n \right \rfloor} \setminus H_\HitH \subseteq I.
\end{equation}
Further, whp $H_\HitH \cup I = H_{\HitH+|I|}$ contains no avoidable configurations, and so in particular no hyperedge with two (or more) partner hyperedges.\footnote{In more detail: Note that because whp $\HitH +|I| = O(n \log n + \pi_I n^{r-1})=O(n \log n)$, by the standard coupling in~\S\ref{section:criticalwindow}, $H_{\HitH+|I|}$ can whp be embedded into $H' \sim H(n, \pi')$ for some $\pi' = n^{1-r+o(1)}$, and by Lemma~\ref{lemma:Hnotbad}, $H'$ contains no avoidable configuration.} So whp $I$ does not contain any non-exclusive partner hyperedges, that is, whp
\begin{equation}\label{eq:IsubsetX}
\{h'\in I:h\in H_\HitH,|h\cap h'|\ge 2\}\subset \bigcup_{h \in H_\HitH} \mc{X}_h.
\end{equation}
Now suppose that \eqref{eq:diffinI} and \eqref{eq:IsubsetX} hold. If $h \in \mc F$, then $h$ has a partner hyperedge $h'$ in 
\[
\{h'\in H_{\HitH+\left \lfloor g(n)n \right \rfloor} \setminus H_\HitH:|h'\cap h|\ge 2\} \subseteq 
\{h'\in I:h\in H_\HitH,|h\cap h'|\ge 2\}
\subseteq \bigcup_{h \in H_\HitH} \mc{X}_h. \]
This partner hyperedge $h'$ cannot be in some $\mc{X}_{h''}$ for $h'' \neq h$, because then it would not be an exclusive partner hyperedge to $h''$. So it follows that $h' \in \mc{X}_{h}$, and so $h' \in \mc{X}_h \cap I$. But then \eqref{eq:defRprime} implies that $h \in \mc R'$. Since $h \in \mc F$ was arbitrary and $\mc R' \subseteq \mc R$, it follows that $\mc F \subseteq \mc R$.

\end{proof}

\section{Process coupling - Step 3}\label{section:step3}
As the final piece of the puzzle, we show that after removing every hyperedge from $H_\HitH$ independently with a small probability, whp we still have an instance of the stopped random hypergraph process.

\begin{proposition}\label{prop:couplingnewprocess}
Let $H_\HitH$ be the stopped random hypergraph process, and let $\mc{R}\subset H_\HitH$ be a subset of hyperedges where we include every $h \in H_\HitH$ independently with probability
		\[
		\pi_\mc{R} = \frac{10r^4 g(n)}{n}.
		\]
We may couple $H_\HitH$ and $\mc{R}$ with another instance $H'_{\HitHP}$ of the stopped random hypergraph process so that, whp, 
\[
H_\HitH \setminus \mc{R} = H'_{\HitHP}.
\]
\end{proposition}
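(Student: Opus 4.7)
My plan is to construct the coupling via a thinning argument on the standard coupling (Definition~\ref{def:mastercoupling}). Let $(U_h)_{h \in \binom{V}{r}}$ be iid uniform on $[0,1]$, generating $(H_t)_{t=0}^M$ and $\HitH$ via the ascending $U_h$-order; let $(Y_h)_{h\in\binom{V}{r}}$ be iid Bernoulli$(\pi_\mc R)$ marks independent of $(U_h)$. Set $\mc R = \{h \in H_\HitH : Y_h = 1\}$, which has the required conditional distribution given $H_\HitH$. Define $H'$ to be the hypergraph process that uses the $(U_h)$-order restricted to unmarked hyperedges (those with $Y_h = 0$), stopped at the first step $\HitHP$ at which $H'$ attains minimum degree $1$.

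The heart of the argument is to show that whp $H'_{\HitHP} = H_\HitH \setminus \mc R$. Two whp events suffice: \emph{(i)} every vertex has at least one incident unmarked hyperedge in $H_\HitH$; and \emph{(ii)} the last hyperedge $h^*$ of $H_\HitH$ in the $(U_h)$-order, which is the unique $H_\HitH$-incident hyperedge of the final covered vertex $v^*$, is itself unmarked. Event \emph{(ii)} holds with probability $1-\pi_\mc R = 1-o(1)$. Given \emph{(i)} and \emph{(ii)}, at time $U_{h^*}$ in $H'$ the vertex $v^*$ becomes non-isolated (via the unmarked $h^*$), every other vertex is already non-isolated (by \emph{(i)}), and since no vertex can be covered earlier in $H'$ than in $H$, this is precisely the unmarked hitting time. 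Thus $H'_{\HitHP} = H_\HitH \setminus \mc R$.

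For \emph{(i)}, I write $d(v)$ for $d_{H_\HitH}(v)$ and bound the expected number of ``bad'' vertices (those with all $H_\HitH$-incident hyperedges marked) by $n\cdot\expe[\pi_\mc R^{d(v)}]$. By the critical window analysis of \S\ref{section:criticalwindow}, whp $|H_\HitH|$ corresponds to $\pi_0\binom{n-1}{r-1} = \log n + O(g(n))$, so $d(v)$ given its size is approximately Poisson$(\log n + O(g(n)))$ conditional on $d(v) \ge 1$; a short computation then yields
\begin{equation*}
n\cdot\expe[\pi_\mc R^{d(v)}] \lesssim n\cdot e^{-\log n}\bigl(e^{\pi_\mc R\log n}-1\bigr) \lesssim \pi_\mc R \log n = \frac{10 r^4 g(n)\log n}{n} = o(1),
\end{equation*}
using $g(n) = o(\log n/\log\log n)$. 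Markov's inequality yields \emph{(i)} whp.

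The main obstacle is that $H'$ as just defined is the stopped random hypergraph process on the \emph{random} subset $\mc U = \{h : Y_h = 0\}$ rather than on the full ground set $\binom{V}{r}$ as required. To present $H'_{\HitHP}$ as a genuine instance of the stopped random hypergraph process on $\binom{V}{r}$, I would extend the $(U_h)$-order on $\mc U$ to a full order on $\binom{V}{r}$ by inserting the marked hyperedges at uniformly random positions after $\HitHP$, and verify that the resulting law agrees with the stopped-process law up to TV distance $o(1)$. This is a standard estimate because $|\mc U|/\binom{n}{r} = 1-\pi_\mc R = 1-o(1)$ and the hitting time $\HitHP = O(n\log n)$ is negligible relative to $|\mc U|$, while the effective parameter shift induced by thinning satisfies $\pi_0\pi_\mc R = O(\log n/n)\cdot g(n)/\binom{n-1}{r-1}$, which is of smaller order than the critical window width $g(n)/\binom{n-1}{r-1}$.
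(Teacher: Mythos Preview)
Your thinning construction is exactly the paper's approach: mark every hyperedge independently with probability $\pi_{\mc R}$, define the thinned process, and show that $H_\HitH\setminus\mc R$ is the hitting-time snapshot of the thinned process. Two points, however, are handled more cleanly in the paper and are genuine gaps in your write-up.

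First, your ``main obstacle'' is a non-issue. Since the marks $(Y_h)$ are i.i.d.\ and independent of $(U_h)$, \emph{conditional on the number $k$ of marked hyperedges}, the thinned sequence is a uniformly ordered uniformly random $(M-k)$-subset of $\binom{V}{r}$ --- that is, it has exactly the law of the first $M{-}k$ steps of the random hypergraph process. No TV estimate or ad hoc reinsertion is needed; once you know (whp) that $\HitHP\le M-k$, the stopped thinned process \emph{is} an instance of the stopped random hypergraph process. Your proposed insertion of marked hyperedges ``after $\HitHP$'' would in fact produce the wrong joint law and should be replaced by this observation.

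Second, the Poisson heuristic for $d_{H_\HitH}(v)$ is not justified: the stopping at $\HitH$ introduces dependence, and ``degree conditioned on being $\ge 1$'' is not the correct description of the stopped degree law. The paper avoids this entirely by splitting into two first-moment computations inside $H_{T_2}\sim H(n,\pi_+)$ (which whp contains $H_\HitH$): (a) whp no vertex is incident with two hyperedges in $\mc R$, since the expected count is $O(n^{2r-1}\pi_+^2\pi_{\mc R}^2)=o(1)$; and (b) whp no vertex has exactly one $\mc R$-hyperedge and no other hyperedge in $H_{T_1}\sim H(n,\pi_-)$, since the expected count is $O\big(n\cdot n^{r-1}\pi_+\pi_{\mc R}(1-\pi_-)^{\binom{n-1}{r-1}}\big)=o(1)$. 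Together these give your event (i). Incidentally, your event (ii) is redundant: it is just (i) applied to the last-covered vertex $v^*$, whose only $H_\HitH$-hyperedge is $h^*$.
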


\begin{proof} 
Let $H_\HitH$ be given together with the whole random hypergraph process $(H_t)_{t=0}^M$, with the hyperedges appearing in the uniform order
	 \[
	 h_1, h_2, \dots, h_M.
	 \]
	Now remove every hyperedge from this sequence independently with probability $\pi_\mc{R}$, resulting in a \emph{thinned sequence}
	\[
	h_1', h_2', \dots, h_{M-k}',
	\]
	where $k \sim \mathrm{Bin}(M, \pi_\mc{R})$ is the number of removed hyperedges. Note that every potential hyperedge $h$ appears with probability $1-\pi_\mc{R} =1-o(1)$ somewhere in this thinned sequence, independently, and the order of the hyperedges in the thinned sequence is still uniform. 
	Conditional on $k$, this describes an instance of the first $M-k$ hypergraphs in the random hypergraph sequence.
			
	Considering $H_\HitH$, we removed a set $\mc{R}$ of hyperedges independently with probability $\pi_\mc{R}$ each, yielding some hypergraph $H'_{t_0}$ in the thinned sequence. It remains to show that whp $t_0=\HitHP$. We certainly have $t_0 \le \HitHP$, because we only removed hyperedges, so we cannot have minimum degree 1 any earlier than before. So we need to show that $\HitHP \le t_0$ whp, that is, whp $H'_{t_0}$ has no isolated vertices.
	
	This follows easily from the  following observations:
	\begin{enumerate}
\item Let $T_1, T_2$ 
be independent of everything else, with $T_1 \sim \mathrm{Bin}\Big(M, \pi_- \Big)$ and $T_2 \sim \mathrm{Bin}\Big(M, \pi_+ \Big)$. Then $H_{T_1} \sim H(n, \pi_-)$ and $H_{T_2} \sim H(n, \pi_+)$. Furthermore whp $T_1 \le \HitH \le T_2$, and so $H_{T_1} \subseteq H_\HitH \subseteq H_{T_2}$ whp.
This is the well-established critical window, see \eqref{eq:defcriticalwindow}.

\item Whp, no vertex is incident in $H_\HitH$ with more than one hyperedge in $\mc{R}$.

To see this, consider the hypergraph $H_{T_2}$. The expected number of pairs of hyperedges overlapping in at least one vertex which are present in $H_{T_2}$ but later removed with probability $\pi_\mc{R}$ is of order $n^{2r-1} \pi_+^2 \pi_\mc{R}^2=O(\log(n)^2 g(n)^2 / n) =o(1)$. So whp there are no such two hyperedges in $H_{T_2}$, nor in $H_\HitH \subseteq H_{T_2}$.

\item Whp, no vertex is incident in $H_\HitH$ with one hyperedge in $\mc{R}$, and no other hyperedges in $H_{T_1}$.

If $H_{T_1} \subseteq H_\HitH \subseteq H_{T_2}$, which holds whp, such a vertex would have to be incident with a hyperedge $h \in H_{T_2}$ which is removed in the thinned process, and in $H_{T_1}$ it would be incident with no other hyperedges. The probability of that is of order 

\[O\left( n^{r-1} \pi_+ \pi_\mc{R} (1-\pi_-)^{{n-1 \choose r-1}-1} \right)=  O \left( \log n g(n)e^{g(n)}/n^2\right)=o(1/n).
\]
So whp there is no such vertex.

		\end{enumerate}
		As $H_\HitH$ has no isolated vertices, it follows from the second and third observation, we find that whp $H_\HitH \setminus \mc{R}=H'_{t_0}$ also has no isolated vertices. So whp $\HitHP \le t_0$ as required. 
	\end{proof}
\section{Proof of Theorem \ref{theorem:finalcoupling}}\label{section:puteverythingtogether}
We now have all our ducks in a row. Combining Propositions \ref{prop:coupling1}, \ref{prop:couplingrandomset} and \ref{prop:couplingnewprocess}, we obtain a chain of couplings that whp embeds the stopped hypergraph process into the cliques of the stopped graph process:
	\[
H'_{\HitHP}\quad \quad \stackrel{\text{whp}}{=} \quad \quad H_{\HitH} \setminus \mc{R} \quad \quad \stackrel{\text{whp}}{\subseteq} \quad \quad H_{\HitH} \setminus \mc{F} \quad \quad \stackrel{\text{whp}}{\subseteq} \quad \quad \mathrm{cl}(G_\HitH)
\]
In more detail:
First of all, note that we may combine different couplings via the Gluing Lemma (which is trivial in this finite setting\footnote{Given couplings of $X$ and $Y$ and of $Y$ and $Z$, i.e., desired distributions for $(X,Y)$ and for $(Y,Z)$, we construct $(X,Y,Z)$ by starting with $Y$ and, given the value of $Y$, taking the appropriate conditional distributions for $X$ and for $Z$ -- for example with conditional independence.}). We start with a stopped hypergraph process $H'_{\HitHP}$, which we couple according to Proposition \ref{prop:couplingnewprocess} with a stopped hypergraph process $H_\HitH$ and a set $\mc{R} \subset H_\HitH$ where every hyperedge is included independently with probability $\pi_\mc{R}$, so that whp 
\[H'_{\HitHP} = H_\HitH \setminus \mc{R}.\]
Then, via Proposition \ref{prop:couplingrandomset}, we embed $H_\HitH$ into a full hypergraph process $(H_t)_{t=0}^M$ so that whp $\mc{F} \subset \mc{R}$, where $\mc{F}$ is the set of all $h \in H_\HitH$ with a partner hyperedge in $H_{\HitH+ \left \lfloor n g(n) \right \rfloor} \setminus H_\HitH$. It follows that whp
\[H_\HitH \setminus \mc{R} \subseteq H_\HitH \setminus \mc{F}.\]
Finally, couple $(H_t)_{t=0}^M$ with the random graph process $(G_t)_{t=0}^N$ via Proposition \ref{prop:coupling1}. By part b) of Proposition \ref{prop:coupling1}, we have $\mc{E} \subset \mc{F}$, and so whp 
\[
H_\HitH \setminus \mc{F} \subseteq H_\HitH \setminus \mc{E} \subset \mathrm{cl}(G_{T_G}).
\]
Following through the chain of couplings, we have coupled $H'_{\HitHP}$ and $G_\HitG$ so that, whp, 
\[
H'_{\HitHP} \subseteq  \mathrm{cl}(G_{T_G}).
\]
\qed

	\section{The hitting time of $K_r^{(s)}$-factors in random $s$-uniform hypergraphs}\label{section:hypergraphfactor}

In the following, for $r>s\ge 3$, let $K_r^{(s)}$ denote the complete $s$-uniform hypergraph on $r$ vertices. By adapting the proof of Theorem~\ref{theorem:finalcoupling}, it is straightforward to prove a corresponding result for $K_r^{(s)}$-factors. To emphasize the analogy with Theorem~\ref{theorem:finalcoupling}, let $(G_t)_{t=1}^{N_s} = (H_t^s)_{t=1}^{N_s}$ and 
\[
\HitG = \mathrm{min}\{ t : \text{ every vertex in $G_t$ is contained in at least one $K_r^{(s)}$}\}
\]
be the hitting time of a $K_r^{(s)}$-cover. Finally, for an $s$-uniform hypergraph $G$, let $\cl(G)$ be the set of vertex sets from $\binom{V}{r}$ which span the copies of $K_r^{(s)}$ in $G$, such that $\cl(G)$ is again an $r$-uniform hypergraph in the usual sense. Then, in fact, the following holds true:

\begin{theorem} \label{theorem:finalcouplinghyper}
	Let $r > s \ge 3$. We may couple the stopped random $r$-uniform hypergraph process $H_\HitH$ and the stopped random $s$-uniform hypergraph process $G_\HitG$ so that, whp,
	\[
	H_\HitH \subseteq \cl(G_\HitG).
	\]
	That is, whp, for every hyperedge in $H_\HitH$ there is copy of $K_r^{(s)}$ in $G_\HitG$ on the same vertex set. 
	\end{theorem}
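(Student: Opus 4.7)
The plan is to adapt the proof of Theorem \ref{theorem:finalcoupling} to the $s$-uniform setting, replacing $r$-cliques with copies of $K_r^{(s)}$ and adjusting the exponent $\binom{r}{2}$ to $\binom{r}{s}$ throughout. The critical windows become $\pi_{\pm} = (\log n \pm g(n))/\binom{n-1}{r-1}$ and $p_{\pm} = (\pi_{\pm}/(1-n^{-\delta}))^{1/\binom{r}{s}}$, where $\delta$ comes from the required $s$-uniform analogue of Theorem \ref{theorem:riordancoupling}: a one-sided coupling of $G \sim H_s(n,p_+)$ with $H \sim H_r(n,\pi_+)$ that whp realises every hyperedge of $H$ by a copy of $K_r^{(s)}$ in $G$ on the same vertex set. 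Since the obstruction that forced the separate $r=3$ treatment in \cite{heckel2020random} (clean $3$-cycles, a phenomenon specific to graph edges) disappears once $s \ge 3$, Riordan's argument from \cite{riordan2022random} should generalise cleanly with $\binom{r}{2}$ replaced by $\binom{r}{s}$ in all counting estimates.

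The key structural simplification is this: a pair of $r$-hyperedges overlapping in $k$ vertices is a connected $r$-uniform hypergraph on two hyperedges with nullity $k-1$, hence an avoidable configuration whenever $k \ge 3$. In the graph proof, the awkward case was $k=2$ (partner hyperedges, nullity $1$), which fell outside the avoidable-configuration machinery and drove the technical work of \S\ref{section:processcoupling1}--\S\ref{section:step3}. For $s \ge 3$, two copies of $K_r^{(s)}$ share an $s$-edge only when their vertex sets overlap in at least $s \ge 3$ vertices, and any such pair is already an avoidable configuration. Thus the analogue of Lemma \ref{lemma:Hnotbad} yields whp that no partnered pairs of hyperedges exist in $H$, so the exceptional set $\mc{E}$ of \S\ref{section:processcoupling1} is empty whp, and the cleanup propositions (\ref{prop:couplingrandomset} and \ref{prop:couplingnewprocess}) become unnecessary.

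The coupling therefore collapses to a single step. Assign each $s$-edge $e \in G$ an independent uniform auxiliary time $\tau(e) \in [0,1]$, and for each hyperedge $h \in H$ set $\tau(h) = \max_{e \in E(h)} \tau(e)$, where $E(h)$ denotes the set of $\binom{r}{s}$ many $s$-edges of the $K_r^{(s)}$-copy on the vertex set of $h$. Whp the sets $\{E(h):h \in H\}$ are pairwise edge-disjoint, so the induced orders $\sigma_G$ and $\sigma_H$ are uniform by symmetry, and every hyperedge $h$ appears at exactly the same auxiliary time as its corresponding $K_r^{(s)}$-copy. The analogue of Lemma \ref{lemma:hittingtimesmatch} then applies: setting $t_-= F^{-1}(\pi_-/\pi_+)$ with $F$ the distribution function of $\max_{1 \le i \le \binom{r}{s}}U_i$, one shows that the isolated vertices of $H(t_-)$ are precisely the vertices of $G(t_-)$ not covered by any $K_r^{(s)}$-copy, giving $t_H = t_G$ whp and hence $H_{T_H} \subseteq \cl(G_{T_G})$.

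The main obstacle is establishing the $s$-uniform Riordan-style coupling together with the adapted extra-clique analysis (analogues of Lemmata \ref{lemma:lowdegreenoextra} and \ref{lemma:sumbound}, with the exponent $\min(2/r,1-2/r)$ replaced by $\min(s/r,1-s/r)>0$) guaranteeing whp that no low-degree vertex of $H$ is incident with an extra copy of $K_r^{(s)}$ in $G$. All these adaptations are essentially bookkeeping, but they must be carried out carefully; once they are in place, the remainder of the proof is a substantial simplification of the graph case, in which the entire partner-hyperedge apparatus (Sections \ref{section:processcoupling1}--\ref{section:step3}) is rendered trivial by the avoidable-configuration bound.
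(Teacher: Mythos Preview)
Your proposal is correct and follows essentially the same approach as the paper. Two small corrections: the $s$-uniform Riordan coupling you flag as ``the main obstacle'' is not something you need to establish --- it is already Theorem~5 of \cite{riordan2022random} (stated in the paper as Theorem~\ref{theorem:riordancoupling_extension}); and your guessed exponent $\min(s/r,1-s/r)$ for the analogue of Lemma~\ref{lemma:sumbound} is not quite right --- the paper obtains $\min\big((r-1)/\binom{r}{s},\, s-1-s/r\big)$, which specialises to $\min(2/r,1-2/r)$ when $s=2$ but differs from your formula for $s\ge 3$.
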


The combination of Theorems~\ref{theorem:kahnhitting} and \ref{theorem:finalcouplinghyper} then immediately implies the following:

\begin{corollary}
    \label{corollary:hypergraphhitting}
	Let $r > s \ge 3$ and $n \in r \mathbb{Z}_+$, then whp $G_\HitG$ has a $K_r^{(s)}$-factor.
\end{corollary}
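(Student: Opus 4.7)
The plan is to deduce the corollary by combining the coupling result of Theorem~\ref{theorem:finalcouplinghyper} with Kahn's hitting time theorem for perfect matchings in random uniform hypergraphs, Theorem~\ref{theorem:kahnhitting}. Concretely, I would first invoke Theorem~\ref{theorem:finalcouplinghyper} to fix a coupling under which $H_\HitH$ is distributed as a stopped random $r$-uniform hypergraph process on $[n]$, $G_\HitG$ is distributed as a stopped random $s$-uniform hypergraph process on $[n]$, and whp every hyperedge of $H_\HitH$ is realised as the vertex set of a copy of $K_r^{(s)}$ in $G_\HitG$, that is, $H_\HitH \subseteq \cl(G_\HitG)$.

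Under this coupling the marginal of $H_\HitH$ is by construction that of a stopped random $r$-uniform hypergraph process, so since $r > s \ge 3$ and $n \in r\mathbb{Z}_+$, Theorem~\ref{theorem:kahnhitting} applies and yields that, whp, $H_\HitH$ contains a perfect matching $\mathcal{M}$, i.e.\ a collection of $n/r$ pairwise vertex-disjoint $r$-element hyperedges whose union is $[n]$. A union bound over the two whp events shows that, with probability $1-o(1)$, both the containment $H_\HitH \subseteq \cl(G_\HitG)$ and the existence of $\mathcal{M} \subseteq H_\HitH$ hold simultaneously.

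On this joint good event, the coupling assigns to every $h \in \mathcal{M}$ a copy of $K_r^{(s)}$ in $G_\HitG$ sitting on the same $r$-element vertex set as $h$. Because the hyperedges of $\mathcal{M}$ are pairwise vertex-disjoint and cover $[n]$, so are the associated copies of $K_r^{(s)}$; together they form a $K_r^{(s)}$-factor of $G_\HitG$, which is exactly the statement to be proved.

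The main work has already been absorbed into Theorem~\ref{theorem:finalcouplinghyper}, so there is no genuine obstacle left in the corollary itself. The only point that one must keep honest is that the two whp statements refer a priori to separate probability spaces; this is resolved by noting that the coupling provided by Theorem~\ref{theorem:finalcouplinghyper} preserves the correct marginal distribution of the $r$-uniform hypergraph process, so Kahn's theorem remains valid on that marginal, and the argument above takes place on a single joint probability space.
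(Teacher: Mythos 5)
Your argument is correct and is exactly the route the paper takes: the corollary is stated as an immediate consequence of combining Theorem~\ref{theorem:finalcouplinghyper} with Kahn's hitting time theorem (Theorem~\ref{theorem:kahnhitting}), using precisely the union bound over the two whp events and the observation that a perfect matching in $H_\HitH$ pulls back to vertex-disjoint copies of $K_r^{(s)}$ in $G_\HitG$. Your added care about the marginal distribution being preserved by the coupling is the right point to keep honest, and it matches the paper's (implicit) reasoning.
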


 The proof of Theorem~\ref{theorem:finalcouplinghyper} proceeds exactly along the lines of the proof of our main theorem, Theorem \ref{theorem:finalcoupling}, and we therefore only point out the necessary modifications in the following.  Moreover, it is actually slimmer than that of Theorem~\ref{theorem:finalcoupling}, due to the major simplification that whp, no two hyperedges of $H_r(n,\pi_+)$ overlap in three or more vertices (see Remark~\ref{remark:vertexdisjoint}). Therefore, there are no ``partner hyperedges'' whose corresponding $K_r^{(s)}$'s would not appear independently, and we can simply take uniform orders without dummy edges in the process coupling. Correspondingly, Theorem~\ref{theorem:finalcouplinghyper} will be established once we have proved the analogue of Proposition~\ref{prop:coupling2}.

\subsection{Terminology and critical window}
Throughout the proof of Theorem~\ref{theorem:finalcouplinghyper}, to illustrate the analogy with the case $s=2$, we will often use the word ``clique'' for the hypergraph $K_r^{(s)}$ or $s$-edge for a hyperedge of cardinality $s$.

We start from the following extension of Riordan's coupling Theorem~\ref{theorem:riordancoupling}:

\begin{theorem}[{\cite[Thm.~5]{riordan2022random}}]\label{theorem:riordancoupling_extension}
	Let $r>s$ with $s \ge 2$ and $r\ge 4$. There are constants $\epsilon(s,r),\delta(s,r)>0$ such that, for any $p=p(n) \le n^{-(r-1)/\binom{r}{s} + \epsilon}$, the following holds. Letting $\pi= (1-n^{-\delta})p^{r \choose s}$, we may couple the random s-uniform hypergraph $G=H_s(n,p)$ with the random hypergraph $H=H_r(n,\pi)$ 
	so that, whp, for every hyperedge in $H$ there is a copy of $K_r^{(s)}$ in $G$ on the same vertex set.\footnote{Again, in \cite{riordan2022random}, Theorem~\ref{theorem:riordancoupling_extension} was given with an unspecified $o(1)$-term in place of $n^{-\delta}$. As in the case $s=2$, the formulation above follows from a closer inspection of the proof of \cite[Thm.~5]{riordan2022random}.}
\end{theorem}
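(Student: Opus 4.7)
The plan is to extend Riordan's coupling algorithm from \S\ref{section:riordanalgo} to the $s$-uniform hypergraph setting. Order the $M=\binom{n}{r}$ potential $r$-hyperedges arbitrarily as $h_1,\dots,h_M$, and for each $j$ let $A_j$ be the event that $G=H_s(n,p)$ contains a copy of $K_r^{(s)}$ on the vertex set of $h_j$; clearly $P(A_j)=p^{\binom{r}{s}}$. Step $j$ proceeds exactly as in \S\ref{section:riordanalgo}: compute $\pi_j=P(A_j\mid\mathbf{Hist}_{j-1})$, toss an independent coin with heads probability $\pi/\pi_j$, and declare $h_j\in H$ iff the coin lands heads and $A_j$ actually occurs. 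After processing all $M$ potential hyperedges, generate $G$ conditional on the revealed information on the $A_j$'s. The coupling fails iff $\pi_j<\pi$ at some step. The marginal distributions are manifestly correct, so it suffices to prove that failure whp does not occur.

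Next I would adapt the notion of avoidable configuration (Definition~\ref{def:avoidable}) to the present setting, and reduce the argument to two claims: first, if the $r$-uniform hypergraph $\tilde H:=\{h_i : i\in\bY_{j-1}\}$ of ``yes'' answers contains no avoidable configuration, then $\pi_j\ge(1-n^{-\delta})p^{\binom{r}{s}}=\pi$; second, whp the algorithm never produces an avoidable configuration in $\tilde H$ during its entire run. The second claim follows from a standard first-moment calculation: the expected number of avoidable configurations in $H\sim H_r(n,\pi)$ with $\pi=(1-n^{-\delta})p^{\binom{r}{s}}$ and $p\le n^{-(r-1)/\binom{r}{s}+\epsilon}$ is $o(1)$ for $\epsilon,\delta$ small enough in terms of $s,r$, essentially by the calculation underlying Lemma~\ref{lemma:Hnotbad}.

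The first claim is the heart of the proof. By Harris' inequality (Theorem~\ref{theorem:harris}), since $A_j$ is an up-set and $\bigcap_{i\in Y}A_i$ is a principal up-set, we have $P(A_j\mid\bigcap_{i\in Y}A_i)\ge P(A_j)=p^{\binom{r}{s}}$, so the positive conditioning only helps. To handle the negative conditioning on $\bigcap_{i\in N}A_i^c$, I would use the ratio identity
\[
\pi_j \;=\; P\Bigl(A_j\,\Big|\,{\textstyle\bigcap_{i\in Y}}A_i\Bigr)\cdot\frac{P\bigl(\bigcap_{i\in N}A_i^c\,\big|\,\bigcap_{i\in Y}A_i\cap A_j\bigr)}{P\bigl(\bigcap_{i\in N}A_i^c\,\big|\,\bigcap_{i\in Y}A_i\bigr)},
\]
and bound the ratio from below by $1-n^{-\delta}$ via a Janson-type inequality: the numerator and denominator differ only through the effect of additionally conditioning on $A_j$, which can only inflate the conditional probabilities of those $A_k$ whose vertex set overlaps that of $h_j$ in at least two vertices. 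The sub-threshold condition $p\le n^{-(r-1)/\binom{r}{s}+\epsilon}$ ensures that the expected number of such extra $K_r^{(s)}$'s adjacent to $h_j$ is at most $n^{-c}$ for some $c=c(s,r)>0$, and the absence of avoidable configurations in $\tilde H$ bounds the combinatorial structure of the overlaps one has to control.

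The main obstacle will be the precise probabilistic estimate behind the first claim: one must enumerate the possible overlap patterns of an $r$-set $h_k$ (for $k\in N$) with $h_j$ relative to $\tilde H$, estimate the conditional probability of $A_k$ in each case via a careful inclusion-exclusion, and sum the contributions. This is exactly the computation carried out by Riordan for $s=2$ and extends essentially unchanged, provided $\epsilon=\epsilon(s,r)$ and $\delta=\delta(s,r)$ are chosen small enough that the density bound on $p$ absorbs the combinatorial factors coming from the overlap enumeration.
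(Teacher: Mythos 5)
First, note that the paper does not prove this statement at all: it is imported verbatim as \cite[Thm.~5]{riordan2022random}, with only the remark that the coupling algorithm is the one from \S\ref{section:riordanalgo} with ``$r$-clique'' replaced by ``copy of $K_r^{(s)}$''. Your proposal correctly reconstructs the architecture of that cited proof --- the step-by-step coupling with coin probability $\pi/\pi_j$, the reduction of failure to the presence of bad substructures in $\tilde H=\{h_i: i\in\bY_{j-1}\}$, Harris' inequality for the positive conditioning, and the ratio identity for handling $\bigcap_{i\in N}A_i^c$ --- so the approach is the right one. However, as a proof it has a genuine gap precisely where you place the ``main obstacle'': the claim that $\pi_j\ge\pi$ whenever $\tilde H$ has no avoidable configuration is asserted to ``extend essentially unchanged'' from Riordan's $s=2$ computation rather than carried out. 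That computation is the entire content of the theorem; one must enumerate the overlap patterns of the $r$-sets $h_k$, $k\in N$, with $h_j$ relative to the $s$-edges already forced present, bound the relative error $Q_j=\sum_{i\in N_1}p^{|E_i\setminus(E_j\cup R)|}$, and verify (as in the exponent analysis of Lemmata~\ref{lemma:rid_of_comp} and~\ref{lem_hstar}) that every overlap class contributes $n^{-c}$ under $p\le n^{-(r-1)/\binom{r}{s}+\epsilon}$. Without this, nothing quantitative has been proved.

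Two further points. Your first claim is also stated too strongly: absence of avoidable configurations alone does not suffice to bound $Q_j$; one additionally needs the maximum-degree condition (the event $\mc{B}_1$) on $\tilde H$, since otherwise a vertex of $h_j$ could lie in polynomially many cliques of $\tilde H$ and the count of overlapping configurations (the $n^{t-1+o(1)}$ enumeration) breaks down. This is why the paper records that failure implies $H\in\mc{B}_1\cup\mc{B}_2$, not merely $\mc{B}_2$. Finally, a minor imprecision: for general $s$ the conditioning on $A_j$ affects $A_k$ only when $|h_j\cap h_k|\ge s$ (so that they share a potential $s$-edge), not when they share at least two vertices; overlaps of size between $2$ and $s-1$ are harmless by independence, and the correct overlap range $s\le a\le r-1$ is what feeds into the exponent computation.
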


The coupling of Theorem~\ref{theorem:riordancoupling_extension} is identical to the coupling presented in Section~\ref{section:riordanalgo}, replacing every ``$r$-clique'' by ``copy of $K_r^{(s)}$'' along the way.

For $\pi_-, \pi_+$ as in (\ref{eq:defcriticalwindow}) and the constant $\delta$ from Theorem~\ref{theorem:riordancoupling_extension}, 
let 
\begin{equation} \label{eq:defcriticalwindow_hyper}
p_{\pm} = p_{\pm}(s) = (\pi_{\pm}/(1-n^{-\delta}))^{1/{r \choose s}}.
\end{equation}

Note that for $n$ large enough, we have $p_+ \leq n^{-(r-1)/\binom{r}{s}+\varepsilon}$, so we may apply Theorem~\ref{theorem:riordancoupling_extension} with $p=p_+$ and $\pi=\pi_+$ later on.

\subsection{Coupling of {$H_s(n,p)$} and {$H_r(n,\pi)$}}

The process coupling from Theorem~\ref{theorem:finalcouplinghyper} proceeds exactly as before: Given an elaborate coupling of $G \sim H_s(n, p_+)$ and $H \sim H_r(n,\pi_+)$, we equip $G$ and $H$ with appropriately coupled uniform orders $\sigma_G, \sigma_H$ of their respective hyperedges. This section explains the main features of the coupling $(G,H)$ that are needed for the process version.

The goal of the current section is to prove that whp, in the coupling of Theorem~\ref{theorem:riordancoupling_extension}, $G \sim H_s(n,p_+)$ does not have any extra cliques that are incident to low-degree vertices of $H\sim H_r(n,\pi_+)$. In other words, we aim to show the following analogue of Lemma~\ref{lemma:lowdegreenoextra}:

\begin{lemma}\label{lemma:lowdegreenoextra_extension}
	Couple $G \sim H_s(n, p_+)$ and $H\sim H_r(n, \pi_+)$ via the coupling described in \S\ref{section:riordanalgo}. We call the hyperedges in $\mathrm{cl}(G) \setminus H$ \emph{extra cliques}. Then whp, no low-degree vertex of $H$ is incident with any extra clique in $G$.
	\end{lemma}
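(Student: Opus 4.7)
The plan is to mirror the proof of Lemma~\ref{lemma:lowdegreenoextra}, with significant simplifications due to $s \ge 3$: whp no two hyperedges of $H \sim H_r(n,\pi_+)$ overlap in $\ge 3$ vertices (cf.\ Remark~\ref{remark:vertexdisjoint}), so no clean $3$-cycle or partner-hyperedge issue arises and the bare coupling of \S\ref{section:riordanalgo} (with $A_j$ re-interpreted as ``the vertex set of $h_j$ spans a copy of $K_r^{(s)}$ in $G$'') applies directly. I would first transfer Lemma~\ref{lemma:extrabound} verbatim, obtaining, for any $H_0$ with $h_j \notin H_0$ and $H_0 \notin \mc{B}_1 \cup \mc{B}_2$,
\[
P(A_j \mid H = H_0) \le \frac{\pi_j^* - \pi_+}{1 - \pi_+}, \qquad \pi_j^* := P\!\left(A_j \,\Big|\, \bigcap_{i:\, h_i \in H_0} A_i\right);
\]
the argument of \S\ref{section:extracliques} uses only Harris' inequality and the algorithmic coupling, both of which go through unchanged.

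Next, I would establish the analogue of Lemma~\ref{lemma:sumbound}: for every vertex $v$ of $H$,
\[
\sum_{\substack{j:\, v \in h_j \\ h_j \notin H_0}} \bigl(\pi_j^* - p_+^{\binom{r}{s}}\bigr) \le n^{-c + o(1)}
\]
for some $c = c(r,s) > 0$. Because the conditioning is an up-set and leaves the other $s$-subsets i.i.d.\ Bernoulli$(p_+)$, one has $\pi_j^* = p_+^{k_j}$ where $k_j$ is the number of $s$-subsets of $h_j$ not already present in the $s$-uniform hypergraph $G_0$ obtained from $H_0$ by replacing each $r$-edge by a copy of $K_r^{(s)}$. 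I would partition the candidate $r$-sets by the vertex-component profile $c_1 + \dots + c_t = r$ of $G_0$ restricted to $h_j$, with $v$ in the first component. Every $s$-edge of $G_0$ lies inside one component, so $k_j \ge \binom{r}{s} - \sum_i \binom{c_i}{s}$; combining with the $n^{t-1+o(1)}$ bound on candidate sets per profile (which uses that the maximum $s$-edge degree of $G_0$ is $n^{o(1)}$ since $H_0 \notin \mc{B}_1$), each profile contributes
\[
n^{t-1+o(1)} p_+^{\binom{r}{s} - \sum_i \binom{c_i}{s}} = n^{-\sum_i q(c_i) + o(1)}, \qquad q(c) := (c-1) - (r-1)\binom{c}{s}/\binom{r}{s}.
\]
The degenerate case $t = 1$ (i.e.\ $c_1 = r$) is handled separately: at most $n^{o(1)}$ such $r$-sets exist, and the $s$-uniform analogue of Lemma~\ref{lemma:extraavoidable} (\cite[Le.~13]{riordan2022random}) forces $k_j \ge 1$, yielding $\pi_j^* \le p_+ = n^{-(r-1)/\binom{r}{s} + o(1)}$.

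The main technical obstacle is verifying $q(c) > 0$ for every $c \in \{2, \dots, r-1\}$ (note $q(1) = q(r) = 0$), so that $\sum_i q(c_i)$ is bounded below by a positive constant whenever at least one $c_i \in \{2, \dots, r-1\}$. For $2 \le c < s$ this is immediate since $\binom{c}{s} = 0$, giving $q(c) = c - 1 \ge 1$. For $s \le c \le r-1$, $q(c) > 0$ is equivalent to strict monotonicity of
\[
c \,\mapsto\, \binom{c}{s}/(c-1) \,=\, c(c-2)(c-3)\cdots(c-s+1)/s!
\]
in $c$, which holds for $s \ge 3$ since every factor is positive and strictly increasing in $c$. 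Summing the profile contributions gives an expected number of extra cliques incident to a given low-degree vertex of $n^{-c' + o(1)}$ for some $c' > 0$; a union bound over the $n^{o(1)}$ low-degree vertices of $H$ (guaranteed by $\mc{B}_3^c$) and Markov's inequality then yield the conclusion.
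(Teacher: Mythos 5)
Your proposal is correct and follows the same overall strategy as the paper: transfer Lemma~\ref{lemma:extrabound} verbatim, split the expected number of extra cliques at a fixed low-degree vertex as in \eqref{eq:sums_extension}, and bound the main sum by decomposing the candidate $r$-sets according to the component profile $(c_1,\dots,c_t)$ of $G_0$ restricted to $h_j$, with the connected case $c_1=r$ handled via the $s$-uniform analogue of Lemma~\ref{lemma:extraavoidable} from \cite{riordan2022random}. The one place you genuinely diverge is the final analytic step. The paper (Lemma~\ref{lemma:sumbound_extension}) first shows that $\sum_i\binom{c_i}{s}$ is maximised by the extremal profile $(r-t+1,1,\dots,1)$ (Lemma~\ref{lemma:rid_of_comp}) and then minimises the resulting one-variable exponent $w(t)=t-r+(r-1)\binom{r-t+1}{s}/\binom{r}{s}$ by a convexity computation (Lemma~\ref{lem_hstar}), arriving at the explicit bound $n^{o(1)+1-s+s/r}$. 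You instead rewrite the per-profile exponent as $-\sum_i q(c_i)$ with $q(c)=(c-1)-(r-1)\binom{c}{s}/\binom{r}{s}$ (a correct rearrangement, using $t-r=-\sum_i(c_i-1)$) and prove termwise that $q(c)>0$ for $2\le c\le r-1$ via the strict monotonicity of $c\mapsto\binom{c}{s}/(c-1)=c(c-2)\cdots(c-s+1)/s!$ on $c\ge s$, while $q(1)=q(r)=0$. This is a correct and arguably cleaner route: it replaces the paper's two analytical appendix lemmata by a one-line monotonicity check, at the cost of only producing ``some $c'>0$'' rather than the sharp exponent $1-s+s/r$ (which equals $-q(r-1)$ in your notation and is in any case all the paper's argument ultimately needs).
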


Observe that Lemma~\ref{lemma:extrabound} transfers to the current setting without modification. Also, the analogon of Lemma~\ref{lemma:extraavoidable} for $s$-uniform hypergraphs was proven in \cite[Le.~13]{riordan2022random}: Let $H$ be an $r$-uniform hypergraph for $r \ge 4$ and $G$ the $s$-uniform hypergraph obtained from $H$ by replacing each hyperedge by a copy of $K_r^{(s)}$ (merging multiple edges if necessary). If $G$ contains a copy of $K_r^{(s)}$ on a set of vertices that is not a hyperedge in $H$, then $H$ contains an avoidable configuration.

\begin{proof}[Proof of Lemma~\ref{lemma:lowdegreenoextra_extension}]
Consider the coupled $G$ and $H$ with $p=p_+$ and $\pi=\pi_+$ and condition on $H=H_0$ for $H_0 \notin \mc{B}$. Then in particular, there are at most $(\log n)^{8g(n)}=n^{o(1)}$ low-degree vertices in $H_0$. Let $v$ be a low-degree vertex. As in the case $s=2$, by Lemma \ref{lemma:extrabound}, the expected number of `extra' cliques $v$ gets in $G$ is at most
	\begin{align}
	\sum_{\substack{j:v \in h_j\\ h_j \notin H_0}}  \frac{\pi_j^*-\pi_+}{1-\pi_+} \sim \sum_{\substack{j:v \in h_j\\ h_j \notin H_0}}(\pi_j^*-\pi_+) = \sum_{\substack{j:v \in h_j\\ h_j \notin H_0}} (\pi_j^*-p_+^{r \choose s}) +\sum_{\substack{j:v \in h_j\\ h_j \notin H_0}}(p_+^{r \choose s} - \pi_+). \label{eq:sums_extension}
	\end{align}
As $\pi_+=p_+^{r \choose s}(1-n^{-\delta})$, the second sum can be bounded by $n^{-c}$ for some small constant $c>0$, and again, it only remains to bound the first sum in \eqref{eq:sums_extension}.
In the subsequent Lemma~\ref{lemma:sumbound_extension}, which corresponds to Lemma~\ref{lemma:sumbound}, we show that the first sum is also small. It then follows from Lemma~\ref{lemma:sumbound_extension} and our previous observations that the expected number of extra cliques containing $v$ can be bounded by $n^{-c}$ for some small constant $c>0$. As there are only $n^{o(1)}$ low-degree vertices in $H=H_0$, whp no such vertices exist, which concludes the proof of Lemma~\ref{lemma:lowdegreenoextra_extension}.   
\end{proof}

	\begin{lemma}\label{lemma:sumbound_extension}
		Suppose that $H_0 \notin \mc{B}_1 \cup \mc{B}_2 
  $, and define $\pi_j^*$ as in Lemma~\ref{lemma:extrabound}. Let $k = \mathrm{min}\big((r-1)/\binom{r}{s}, s-1-s/r\big) >0$. Let $v$ be an arbitrary vertex. Then	
  \begin{equation}\label{eq:sumbound_extension}
\sum_{\substack{j:v \in h_j\\ h_j \notin H_0}}(\pi_j^* - p_+^{r \choose s}) \le n^{-k+o(1)}.
		\end{equation}
	\end{lemma}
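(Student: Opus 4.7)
My plan is to closely mirror the proof of Lemma~\ref{lemma:sumbound}, adapting the arithmetic to the $s$-uniform setting. First I would let $G_0$ denote the $s$-uniform hypergraph obtained from $H_0$ by replacing every $r$-hyperedge with a copy of $K_r^{(s)}$; then conditioning on $\bigcap_{i:h_i\in H_0}A_i$ is equivalent to conditioning on all $s$-edges of $G_0$ being present in $G$. Since $H_0\notin\mc{B}_1$, the maximum degree of $G_0$ remains $n^{o(1)}$. For each candidate $h_j\notin H_0$ containing $v$, I would decompose the induced sub-hypergraph $G_0|_{h_j}$ into connected components of sizes $c_1,\dots,c_t$ with $\sum_i c_i=r$. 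If all $c_i<s$, then $G_0|_{h_j}$ contains no $s$-edge and $\pi_j^*=p_+^{\binom{r}{s}}$, so these terms contribute $0$; thus only profiles with some $c_i\ge s$ matter.

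For a fixed profile there are at most $n^{t-1+o(1)}$ candidate $h_j$ (pick one free vertex from each remaining component, then extend via the bounded-degree $G_0$-neighbourhood, where each step costs only $n^{o(1)}$ since each hyperedge of $G_0$ covers $s-1$ extra vertices and max degree is $n^{o(1)}$), and at most $\sum_i\binom{c_i}{s}$ of the $\binom{r}{s}$ hyperedges of $K_r^{(s)}|_{h_j}$ lie in $G_0$, so $\pi_j^*\le p_+^{\binom{r}{s}-\sum_i\binom{c_i}{s}}$. Using $p_+=n^{-(r-1)/\binom{r}{s}+o(1)}$, the contribution from such a profile simplifies to
\[
n^{t-1+o(1)}\cdot p_+^{\binom{r}{s}-\sum_i\binom{c_i}{s}}=n^{-\sum_i f(c_i)+o(1)},
\]
where $f(c):=(c-1)-\frac{r-1}{\binom{r}{s}}\binom{c}{s}$. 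There are only finitely many profiles, so it suffices to show $\sum_i f(c_i)\ge k$ for every relevant profile.

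The special case $t=1$, $c_1=r$ has $f(r)=0$ and so needs separate treatment: here I would invoke the $s$-uniform analogue of Lemma~\ref{lemma:extraavoidable} from \cite[Le.~13]{riordan2022random} to conclude that, since $H_0\notin\mc{B}_2$ and $h_j\notin H_0$, not all $\binom{r}{s}$ hyperedges of $K_r^{(s)}|_{h_j}$ can lie in $G_0$, whence $\pi_j^*\le p_+$. Combined with only $n^{o(1)}$ such $h_j$ (all other $r-1$ vertices lie in a bounded-radius $G_0$-neighbourhood of $v$), this case contributes at most $n^{o(1)}\cdot p_+=n^{-(r-1)/\binom{r}{s}+o(1)}\le n^{-k+o(1)}$. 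For the remaining profiles, $f(c)=c-1\ge 0$ for $c\in\{1,\dots,s-1\}$, and $f$ is concave on $\{s,\dots,r-1\}$ because $\binom{c}{s}$ is convex in $c$; hence the minimum of $\sum_i f(c_i)$ over profiles with at least one $c_{i}\ge s$ is attained when exactly one $c_{i_0}$ equals $s$ or $r-1$ and the rest are singletons. A direct computation gives $f(r-1)=s-1-s/r$, which is $\ge k$ by definition of~$k$.

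The main obstacle is verifying $f(s)=s-1-(r-1)/\binom{r}{s}\ge k$. I would handle the two cases in the definition of $k$ separately: against $(r-1)/\binom{r}{s}$ this reduces to $s-1\ge 2(r-1)/\binom{r}{s}$, which holds since $\binom{r}{s}\ge r$ for $3\le s\le r-1$ (so the right-hand side is $<2$) and $s\ge 3$; against $s-1-s/r$ it reduces to $(r-1)/\binom{r}{s}\le s/r$, equivalently $r-1\le\binom{r-1}{s-1}$, which follows from $\binom{r-1}{s-1}\ge\binom{r-1}{2}\ge r-1$ for $r\ge 4$. Summing the finitely many per-profile bounds then yields the claimed $n^{-k+o(1)}$ bound, completing the proof.
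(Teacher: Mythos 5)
Your proof is correct and follows essentially the same route as the paper: the same component decomposition of $h_j$ in $G_0$, the same $n^{t-1+o(1)}$ count, the same treatment of the $c_1=r$ case via the $s$-uniform avoidable-configuration lemma, and your per-component cost $f(c)=(c-1)-\tfrac{r-1}{\binom{r}{s}}\binom{c}{s}$ is just a repackaging of the paper's two appendix lemmas (the paper's $w(t)$ equals $-f(r-t+1)$, and your inequality $r-1\le\binom{r-1}{s-1}$ is exactly its $s\binom{r}{s}\ge r(r-1)$). The only blemish is that $\binom{r-1}{s-1}\ge\binom{r-1}{2}$ fails for $s=r-1$ and $r\ge 5$ — though the needed conclusion $\binom{r-1}{s-1}\ge r-1$ still holds by unimodality, and that comparison is redundant anyway since your first case already yields $f(s)\ge k$.
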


\begin{proof}[Proof of Lemma~\ref{lemma:sumbound_extension}]
	We mimic the proof of Lemma~\ref{lemma:sumbound}, shortening it where appropriate.	
	Let $G_0$ be the $s$-uniform hypergraph obtained by replacing every hyperedge in $H_0$ with a $K_r^{(s)}$. 

	As $H_0 \notin \mc{B}_1$, the maximum degree of $G_0$ is $n^{o(1)}$. Fixing some vertex $v$, we want to count all possible hyperedges $h_j$ which appear in the sum in \eqref{eq:sumbound_extension}, and bound their contribution to \eqref{eq:sumbound_extension}. Let $C_{h_j}$ be the clique corresponding to the hyperedge $h_j$. For some given $h_j$ so that $v \in h_j$, let the components of the subgraph induced by $h_j$ in $G_0$ have $c_1, \dots, c_t$ vertices respectively, so that $\sum_{i=1}^t c_i = r$. 
 
 Again, hyperedges $h_j$ with $c_1 = c_2 = \dots = c_t = 1$ contribute $0$ to the sum in \eqref{eq:sumbound_extension}, and in the following we only need to consider those $h_j$ where $c_i \ge s$ for at least one $i$.
		
		As before, given a fixed sequence $c_1, \dots, c_t$, there are at most $n^{t-1+o(1)}$ 
		ways to pick the clique $C_{h_j}$.	
		If $c_1=r$, then there are only $n^{o(1)}$ choices for $C_{h_j}$, and 
		\[
		\pi^*_j \le p_+ = n^{-(r-1)/\binom{r}{s}+o(1)}.\]
		This implies that the overall contribution to \eqref{eq:sumbound_extension} from $h_j$ with $c_1=r$ is at most $n^{-(r-1)/\binom{r}{s}+o(1)} \le n^{-k+o(1)}$.
		
		So in the following we only consider sequences $c_1, \dots, c_t$ where $c_i < r$ for all $i$. After conditioning on the $s$-edges $E_0$ being present, the potential clique $C_{h_j}$ is missing (at least) all the $s$-edges between different components, so
		\[
		\pi^*_j \le p_+^{{r \choose s} - \sum_{i=1}^t {c_i \choose s}}.
		\]
		Overall, given a fixed sequence $c_1, \dots, c_t$ with $c_i\leq r-t+1$ for all $i$ and $c_i \ge s$ for at least one $i$, the contribution to \eqref{eq:sumbound_extension} from the hyperedges $h_j$ corresponding to this sequence is at most
		\begin{align*}
			n^{t-1+o(1)}p_+^{{r \choose s} - \sum_{i=1}^t {c_i \choose s}} &\stackrel{\text{Lemma}~\ref{lemma:rid_of_comp}}{\leq} n^{t-1+o(1)}p_+^{{r \choose s} - \binom{r-t+1}{s}} = n^{o(1) + t - r + (r-1)\binom{r-t+1}{s}/\binom{r}{s}} \\
   & \stackrel{\text{Lemma}~\ref{lem_hstar}}{\leq}n^{o(1) +1-s+\frac{s}{r} }.
   \end{align*}
   In the above, we have used two analytical lemmata that will be proven in the appendix. 
		Since there are only finitely many potential sequences  $c_1, \dots, c_t$, this concludes the proof.
	\end{proof}

\subsection{Process coupling}
To prove Theorem~\ref{theorem:finalcouplinghyper}, it is sufficient to show the following: 

\begin{proposition}\label{prop:coupling2_extension}
We may construct a coupling $(G, \sigma_G, H, \sigma_H)$ so that all of the following hold.
	\begin{enumerate}[label={\alph*)}]
		\item $G \sim H_s(n,p_+)$, and $\sigma_G$ is a uniformly random order of the $s$-edges of $G$.
		\item $H \sim H_r(n,\pi_+)$, and $\sigma_H$ is a uniformly random order of the hyperedges of $H$.
		\item \label{item:prope_extension} Let $e_{\sigma_G(1)}, \dots, e_{\sigma_G(|G|)}$ be the $s$-edge order given by $\sigma_G$ and $h_{\sigma_H(1)}, \dots, h_{\sigma_H(|H|)}$ the hyperedge order given by $\sigma_H$. For $t \in \mathbb{N}_0$, set
		\[
		G_t^* = \{e_{\sigma_G(1)}, \dots, e_{\sigma_G(t)}\} \text{ and } H_t^* = \{h_{\sigma_H(1)}, \dots, h_{\sigma_H(t)}\} 
		\]
and define
\begin{align*}
	\HitGS &= \min\{t: \text{ every vertex of $G^*_t$ is in an $r$-clique}\},\\
\HitHS  &= \min\{t: \text{$H^*_t$ has minimum degree $1$}\}.
	\end{align*}
Then whp $\HitGS, \HitHS$ are finite. Furthermore, whp we have
		\begin{equation} \label{eq:e1_extension}
H^*_{\HitHS} \subset	\cl(G^*_{\HitGS}).
		\end{equation}
			\end{enumerate}
\end{proposition}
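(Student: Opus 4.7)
The plan is to adapt the proof of Proposition~\ref{prop:coupling2} to the $s$-uniform setting, taking advantage of the fact that for $s\ge 3$ no partner hyperedges arise, so no dummy-edge construction or exceptional set $\mc E$ is needed.

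\textbf{Step 1 (Setup).} Begin with $(G,H)$ coupled via Theorem~\ref{theorem:riordancoupling_extension} at $p=p_+$, $\pi=\pi_+$. By Lemma~\ref{lemma:Hnotbad}, whp $H\notin\mc B_1\cup\mc B_2$, so no two hyperedges of $H$ share three or more vertices; since $s\ge 3$, this forces the $s$-edge sets $E(h)\subset G$ of the corresponding copies of $K_r^{(s)}$, $h\in H$, to be pairwise disjoint.

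\textbf{Step 2 (Construction of $\sigma_G$ and $\sigma_H$).} Attach independent uniform $[0,1]$ random variables $\xi_e$ to the $s$-edges $e\in G$, and let $\sigma_G$ list the edges of $G$ in increasing order of $\xi_e$ (uniform by symmetry). For $h\in H$, set $\tau(h)=\max\{\xi_e:e\in E(h)\}$ and let $\sigma_H$ list hyperedges of $H$ in increasing order of $\tau(h)$. Conditional on the whp-event of Step~1 that the families $E(h)$ are pairwise disjoint, each $\tau(h)$ depends on its own disjoint block of $\binom{r}{s}$ iid uniforms, so by symmetry $\sigma_H$ is uniformly distributed.

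\textbf{Step 3 (Matching hyperedges and cliques).} For $t\in[0,1]$ define $G(t)=\{e:\xi_e\le t\}$ and $H(t)=\{h:\tau(h)\le t\}$, and let $t_G, t_H$ be the corresponding hitting times for $r$-clique-cover (in $G$) and minimum degree $1$ (in $H$), analogous to \eqref{eq:deftG}--\eqref{eq:deftH}. The key observation replacing Lemma~\ref{lemma:couplingpartners} and \eqref{eq:orderprop} is that here $\tau(h)=\tau(E(h))$ \emph{for every} $h\in H$: the copy of $K_r^{(s)}$ on the vertex set of $h$ is present in $G(t)$ exactly when $h\in H(t)$. Hence as soon as $t_G=t_H$, we obtain $H(t_H)\subseteq\mathrm{cl}(G(t_G))$ without any exceptional set.

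\textbf{Step 4 (Hitting times match).} Mirror the proof of Lemma~\ref{lemma:hittingtimesmatch}. Let $F$ be the CDF of $\max(U_1,\dots,U_{\binom{r}{s}})$ for iid $U_i\sim U[0,1]$, and set $t_-=F^{-1}(\pi_-/\pi_+)$, so that $H_-:=H(t_-)$ is (up to a negligible coupling) distributed as $H_r(n,\pi_-)$. By the critical window (see \S\ref{section:criticalwindow}), whp the set $\mc I$ of isolated vertices of $H_-$ is nonempty and $t_-<t_G$. Now establish the $s$-uniform analogues of Lemmas~\ref{lemma:isolatedlowdegree} and~\ref{lemma:isonotbad}: whp every vertex in $\mc I$ is low-degree in $H$, and whp every non-isolated vertex of $H_-$ is incident with some hyperedge of $H_-$. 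Combined with Lemma~\ref{lemma:lowdegreenoextra_extension}, the vertices in $\mc I$ lie in no $K_r^{(s)}$ of $G(t_-)$, and since $\tau(h)=\tau(E(h))$ for all $h$, the non-isolated vertices of $H_-$ are already covered in $G(t_-)$. Thus the uncovered vertices of $G(t_-)$ are precisely $\mc I$, and the first hyperedge in $H$ incident to any $v\in\mc I$ appears (in auxiliary time) simultaneously with the first $K_r^{(s)}$ containing $v$ in $G$. Hence $t_G=t_H$ whp.

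\textbf{Step 5 (Finite hitting times).} Since $p_+,\pi_+$ sit at the upper end of their respective critical windows, whp every vertex of $G$ lies in a copy of $K_r^{(s)}$ and $H$ has minimum degree at least $1$, so the initial segments constructed above already contain the hitting times, giving $T_G^* = $ index of $G(t_G)$ in $\sigma_G$ and analogously for $T_H^*$, both finite whp. Combined with Step~3, this yields \eqref{eq:e1_extension}.

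\textbf{Main obstacle.} The bulk of the work lies in proving the $s$-uniform analogues of Lemmas~\ref{lemma:isolatedlowdegree} and~\ref{lemma:isonotbad}, but these are routine first-moment calculations in the spirit of their $s=2$ counterparts, with the degree parameter $\binom{n-1}{r-1}$ unchanged. The remainder is a clean transliteration of the $s=2$ argument; the absence of partner hyperedges removes all the subtleties involving dummy edges, the sets $S_1,S_2$, and the exceptional set $\mc E$, which is why the proposition's conclusion is the stronger containment \eqref{eq:e1_extension} rather than containment modulo an exceptional set.
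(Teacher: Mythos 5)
Your proposal is correct and follows essentially the same route as the paper: the same dummy-free auxiliary-time construction exploiting that for $s\ge 3$ no two hyperedges share an $s$-edge (so $\tau(h)=\tau(E(h))$ for every $h$), followed by the same matching of hitting times via the isolated vertices of $H_-$ and Lemma~\ref{lemma:lowdegreenoextra_extension}. The only remark worth making is that the ``$s$-uniform analogues'' you flag as the main remaining work are essentially free: Lemma~\ref{lemma:isolatedlowdegree} concerns only the $r$-uniform hypergraph $H$ and carries over verbatim, and the analogue of Lemma~\ref{lemma:isonotbad} is vacuous here since every hyperedge of $H_-$ appears as a clique in $G_-$ at the same auxiliary time.
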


The remainder of this section explains the necessary steps in the proof of Proposition~\ref{prop:coupling2_extension}.

The process coupling proceeds as in Section \ref{subsection:construction}: Given the coupling $G \sim H_s(n, p_+)$ and $H \sim H_r(n,\pi_+)$ from Theorem~\ref{theorem:riordancoupling_extension}, we aim to equip $G$ and $H$ with appropriately coupled uniform orders $\sigma_G, \sigma_H$ of their respective hyperedges. Only this time, since $s\ge 3$, we can exploit the major simplification that whp, no two hyperedges of $H$ overlap in $s \ge 3$ vertices, as this would form an avoidable configuration (see Remark~\ref{remark:vertexdisjoint}). Therefore, there is no need for dummy ($s$-)edges. As $H \notin \mathcal B$ holds whp, it again suffices to specify the construction of $\sigma_G,\sigma_H$ for this case, which we assume from now on.

In the current extension, the process coupling takes the following simple form: Let
 $\{\xi_e : e \in G\}$ 
be uniform random variables on $[0,1]$, independent of each other and everything else. 
For $e \in G$, let
\[\tau(e) = \xi_e  \in [0,1]\]
be the auxiliary time of $e$, and order the $s$-edges in $G$ by the size of  $\tau(e)$ in increasing order. This defines the order $\sigma_G$, which again is uniform by symmetry. 

For $h \in H$, let $E(h)\subset G$ be the $s$-edge set of the $K_r^{(s)}$ in $G$ on the same vertex set, and set
\[
\tau(h) = \max \{ \xi_e: e \in E(h)\}.
\]
Now order the hyperedges of $H$ by the value of the auxiliary times $\tau(h)$ in increasing order, defining the order $\sigma_H$. Since no two hyperedges of $H$ overlap in at least $s \ge 3$ vertices, any two distinct hyperedges of $H$ depend on $\binom{r}{s}$ distinct and therefore independent random variables from the family $\{\xi_e : e \in G\}$. Thus, by symmetry, $\sigma_H$ is uniform.

As in §~\ref{section:induced_clique}, $\sigma_G$ induces an order of the $K_r^{(s)}$-edge sets $\{E(h): h \in H\}$, such that for all $h \in H$ we have $\tau(h) = \tau(E(h))$, so that any hyperedge $h$ and its clique appear at the same auxiliary time.
Define $(G(t))_{0 \geq t \geq 1}$ and $(H(t))_{0 \geq t \geq 1}$ as well as the times $t_G, t_H$ as in (\ref{eq:defHt})-(\ref{eq:deftH}). In the present coupling, we have $t_G \leq t_H$, and since $t_H$ is finite whp, so is $t_G$. We obtain the following identity:

\begin{lemma}\label{lemma:hittingtimesmatch_extension}
	Whp, $t_G = t_H$.
\end{lemma}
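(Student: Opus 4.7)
The plan is to mirror the proof of Lemma~\ref{lemma:hittingtimesmatch} in the simpler $s\ge 3$ setting. The crucial simplification, as noted before the statement, is that in the present construction $\tau(h)=\tau(E(h))$ for \emph{every} $h\in H$, because there are no partner hyperedges to carry dummy edges for. Consequently, one direction of the equality is essentially free: whenever $H(t)$ has minimum degree $\geq 1$, every vertex $v$ lies in some hyperedge $h$ with $\tau(h)\le t$, hence in the clique $E(h)\subseteq G(t)$, giving $t_G\le t_H$. It remains to show $t_H\le t_G$ whp.

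For the nontrivial direction, I will imitate the critical-window argument. Let $F$ denote the CDF of $\max_{1\le i\le\binom{r}{s}}U_i$ for i.i.d.\ uniforms $U_i\in[0,1]$, and set
\[
t_-=F^{-1}(\pi_-/\pi_+),\qquad H_-=H(t_-),\qquad G_-=G(t_-).
\]
Conditional on $H\sim H_r(n,\pi_+)$ and $H\notin\mathcal{B}$, each hyperedge of $H$ lies in $H_-$ independently with probability $\pi_-/\pi_+$ by construction of $\tau$; thus $H_-$ can be coupled with $H_-'\sim H_r(n,\pi_-)$ so that $H_-=H_-'$ whp. By the critical window \eqref{eq:defcriticalwindow}, whp $H_-'$ contains at least one isolated vertex, so whp $t_-<t_H$. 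Let $\mathcal{I}$ be the (whp nonempty) set of isolated vertices of $H_-$.

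The heart of the argument is to show that whp $\mathcal{I}$ coincides with the set of vertices of $G_-$ that are not contained in any copy of $K_r^{(s)}$. For this I will use two auxiliary statements, whose proofs are first-moment calculations identical in spirit to Lemmata~\ref{lemma:isolatedlowdegree} and \ref{lemma:isonotbad} and whose expected counts can be checked using $\pi_\pm=\Theta(\log n/n^{r-1})$: (i) whp every $v\in\mathcal{I}$ is low-degree in $H$ (since an isolated vertex at time $t_-$ has conditional probability $\le n^{-c}$ of having moderate degree in $H$), and (ii) whp every $v\notin\mathcal{I}$ is already incident to some hyperedge of $H_-$. Combining (i) with Lemma~\ref{lemma:lowdegreenoextra_extension}, the only copies of $K_r^{(s)}$ through any $v\in\mathcal{I}$ come from hyperedges of $H$, and since $\tau(h)=\tau(E(h))$ for all such $h$, we get $v$ uncovered in $G_-$. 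Conversely, (ii) gives that every $v\notin\mathcal{I}$ sits inside some $E(h)\subseteq G_-$. Hence $\mathcal{I}$ is exactly the uncovered set of $G_-$; and since for each $v\in\mathcal{I}$ the time at which $v$ first joins a clique of $G$ is $\min\{\tau(E(h)):v\in h\in H\}=\min\{\tau(h):v\in h\in H\}$, which is the time $v$ loses its isolation in $H$, we conclude $t_G=t_H$.

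The main obstacle is purely bookkeeping: one must verify the two auxiliary first-moment bounds (i) and (ii) in the $s$-uniform setting. These are routine because the relevant expectations only involve $\binom{n-1}{r-1}\pi_\pm$, which behaves like $\log n\pm g(n)$, so Markov/Chernoff via Theorem~\ref{theorem:Chernoff} and a union bound suffice exactly as in the $s=2$ case. Unlike the $s=2$ setting, no careful treatment of clean $3$-cycles or partner hyperedges is required, which is precisely why the process coupling is cleaner here.
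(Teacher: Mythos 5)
Your proof is correct and follows essentially the same route as the paper: the same $t_-$, $H_-$, $\mathcal I$ construction, the same appeal to Lemma~\ref{lemma:isolatedlowdegree} and Lemma~\ref{lemma:lowdegreenoextra_extension}, and the same identification of $\mathcal I$ with the uncovered vertices of $G_-$. The only (harmless) redundancy is your auxiliary statement (ii): since every $h\in H$ satisfies $\tau(h)=\tau(E(h))$ here, a vertex outside $\mathcal I$ is covered in $G_-$ by definition of $\mathcal I$, so no first-moment analogue of Lemma~\ref{lemma:isonotbad} is actually needed.
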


\begin{proof}
The proof proceeds exactly as the proof of Lemma~\ref{lemma:hittingtimesmatch}, so we only summarise it here. Define the hypergraphs $H_-$ and $G_-$ as in the proof of Lemma~\ref{lemma:hittingtimesmatch}, and let $\mathcal I$ denote the set of isolated vertices in $H_-$. Suppose that the whp statements of Lemmata~\ref{lemma:lowdegreenoextra_extension} and \ref{lemma:isolatedlowdegree} hold, and that $H \notin \mathcal B$. 

Then $\mathcal I$ is exactly the set of vertices not contained in any $K_r^{(s)}$ in $G_-$: First, $G_-$ does not have any other vertices $v \notin \mathcal I$ not contained in cliques, since all the hyperedges of $H_-$ are present as cliques in $G_-$. But it could potentially be that one of the vertices $v \in \mathcal I$ is covered by an extra clique in $G_-$. To exclude this possibility, observe that Lemma~\ref{lemma:isolatedlowdegree} implies that all $v \in \mathcal I$ become low-degree vertices in $H$. By Lemma~\ref{lemma:lowdegreenoextra_extension}, in the coupled hypergraph $G$, none of the vertices in $\mathcal I$ are included in any extra cliques. In particular, none of the vertices $v \in \mathcal I$ are covered by any extra cliques in $G_-$. 
Finally, all cliques covering the vertices $v \in \mathcal I$ appear in the same order as the corresponding hyperedges. This proves the claim that $t_H=t_G$.
\end{proof}

\begin{proof}[Proof of Proposition~\ref{prop:coupling2_extension}]
  By Lemma~\ref{lemma:hittingtimesmatch_extension}, whp we have $t_G=t_H$ and, moreover, $G(t_G)=G^\ast_{T_G^\ast}$ as well as $H(t_H)=H^\ast_{T_H^\ast}$. Using that $\tau(h) = \tau(E(h))$ for all $h \in H(t_H)$, the claim follows.
\end{proof}

\subsection*{Open Problems}

In their breakthrough paper~\cite{johansson2008factors}, Johansson, Kahn and Vu found thresholds not only for perfect matchings and $r$-clique factors, but also for $F$-factors whenever $F$ is a fixed strictly balanced graph. Is it possible to obtain \emph{sharp} thresholds, or even a hitting time result, for the existence of an $F$-factor? Riordan's coupling result~\cite{riordan2022random} can be extended from the case $F=K_r$ to certain \emph{nice} graphs $F$ (see Definition 9 in \cite{riordan2022random}) with some convenient properties. Thus, for these $F$, a sharp threshold for $F$-factors can be obtained via this coupling from Kahn's sharp threshold result for perfect matchings \cite{kahn2019asymptotics}. Building upon Riordan's coupling, the approach from this paper may extend to these nice $F$, yielding a hitting time result --- or perhaps some additional constraints are needed to make the proof go through in this case. However, for $F$ which are strictly balanced but not nice, it is unclear how to proceed.

One might also consider thresholds and hitting times for other hypergraph properties, which could then be transferred to the graph setting by methods similar to those in this paper. The hitting time result for perfect matchings in $r$-uniform hypergraphs, Theorem \ref{theorem:kahnhitting}, is an extension of the corresponding classic result for $r=2$ in \cite{bollobas1985} that the hitting times for the existence of a vertex cover, a perfect matching and connectivity coincide whp. The hitting time results for the connectivity thresholds were extended to hypergraphs in \cite{poole2015}, thus we know that the hitting times for minimum degree 1, connectivity and the existence of a perfect matching coincide.
Another classic result in \cite{bollobas1985} states that the hitting times for minimum degree $2$, $2$-connectivity and the existence of a Hamilton cycle coincide whp.
Equivalence of the former for the extension to hypergraphs was established in \cite{poole2015}.
The notion of a Hamilton cycle does not extend canonically to hypergraphs.
For one version, loose Hamilton cycles, there was significant progress towards the threshold \cite{dudek2011,dudek2012,ferber2015,petrova2022}, but both its exact location and the hitting time version are still outstanding.

Similarly, one may consider $2$-factors, that is, $2$-regular spanning subhypergraphs, and in particular connected $2$-factors.
As opposed to loose Hamilton cycles, where all but two $2$-degree vertices incident with each hyperedge have degree $1$, all vertices in connected $2$-factors have degree $2$.
For regular uniform hypergraphs, the location of the threshold for the existence of $2$-factors has recently been established by Panagiotou and Pasch in \cite{panagiotou2023} following the weaker result in \cite{panagiotou2019}.

A problem that is closely related to the existence of perfect matchings is the existence of exact covers. Here, we ask for a selection of vertices such that each hyperedge is incident with exactly one vertex. While the threshold has been established for uniformly random regular $r$-uniform hypergraphs in \cite{moore2016}, locating the threshold for this problem in the binomial random $r$-uniform hypergraph is still open~\cite{kalapala2008}.

\subsection*{Acknowledgements}	
This project was initiated during the research workshop of Angelika Steger's group in Buchboden, August 2021. We are grateful to Oliver Riordan for a helpful discussion.

	\bibliographystyle{plain}
	\bibliography{literature}

\appendix

\section{Appendix}
\subsection*{Proof of Lemma \ref{lemma:lowdegreenotbad}}
Let $k$ be the number of vertices in $K$, and fix a vertex $v$ in $K$ with degree $d$ (in $ K$).  Denote by $Y$ the number of copies $(v',K')$ of the rooted hypergraph $(v,K)$ in $H$, with $v'$ being a low-degree vertex. 
Then we have $E[Y]\le kE[X_K]P(D\le 7g(n)-d)$, where $D$ is binomial with parameters $\binom{n-1}{r-1}-d$ and $\pi_+$. 
As $\expe[D] =\Theta(\log n)$, the Chernoff bound, Theorem~\ref{theorem:Chernoff}, gives $P(D\le 7g(n)-d)\le e ^ {-\Theta( \log n)}=n^{-\Theta(1)}$, thus $E[Y]=n^{-\Theta(1)}$ using $E[X_K]\le n^{o(1)}$. Therefore  
whp no low-degree vertex of $H$ exists in a copy of $K$ in $H$.
\qed

\subsection*{Proof of Lemma~\ref{lemma:boundforpijprime}}
For a), considering \eqref{eq:defpijprime}, we can only have $\pi_j'=0$ if $h_j$ is the last missing hyperedge in a forbidden $3$-cycle which was excluded in the event $\mc{D}_H$. But then the corresponding triangle in $G$ is the last missing triangle in a forbidden $3$-cycle excluded in the event $\mc{D}_G$, so by  \eqref{eq:defpij} we have $\pi_j=0$.

Now let
\begin{align*}
    \mc{U} &= \mc{U}_H \cap \bigcap_{i \in Y_{j-1}} B_i\\
    \mc{D} &= \bigcap_{i \in N_{j-1} \cup S_{j-1}} B_i^c.
\end{align*}
then we have
\begin{equation} \label{eq:UandD}
\pi_j' = P(B_j \mid \mc{U} \cap  \mc{D} \cap \mc{D}_H ).
\end{equation}
For b), consider the auxiliary random hypergraph $H'$ where the hyperedges $h_i$, $i\in \bY_{j-1}$, are deterministically present, as are all the hyperedges from the clean $3$-cycles in $\mc{C}_H$ (so $\mc{U}_H$ holds), and the remaining hyperedges are drawn independently with probability $\pi$. Then as $\mc{D} \cap \mc{D}_H $ is a down-set, we can apply Harris' inequality to $H'$, giving
\[
\pi'_j =  P(B_j \mid \mc{U} \cap \mc{D} \cap \mc{D}_H ) \le  P(B_j \mid \mc{U})=\pi.
\]

For c), assume $\pi_j > \pi_j'$. As $\pi'_j \le \pi$ by b), we only need to bound $\pi_j'$ from below. 
First of all note that $\pi_j> \pi_j' \ge 0$, which by part a) implies
\begin{equation}\label{eq:pijprimepositive}
\pi_j'>0.
\end{equation}
In particular $P(B_j \mid \mc{U} \cap \mc{D})=\pi>0$, and further $P(\mc D_H \mid \mc{U} \cap \mc{D})>0$ \footnote{Since $\mc D_H$ is the event that there are no further clean $3$-cycles, which is compatible with the clean $3$-cycles in $\mc U_H$, and if any subset of hyperedges in the event $\mc U_H \cap \bigcap_{i \in Y_{j-1}}B_i$ included a forbidden clean $3$-cycle, then the last index $i$ from that subset would not have been added to $Y_{j-1}$ as then $\pi_i'=0$.}. So an application of Bayes' theorem to $P(\cdot \mid \mc{U} \cap \mc{D})$ gives  
\[
\pi_j' = \frac{P(\mc{D}_H \mid B_j \cap \mc{U} \cap \mc{D}) \cdot P(B_j \mid \mc{U} \cap \mc{D})}{P(\mc{D}_H \mid \mc{U} \cap \mc{D})} = \frac{P(\mc{D}_H \mid B_j \cap\mathcal U\cap \mathcal D)}{P(\mc{D}_H \mid \mathcal U \cap\mathcal D)}\pi.
\]
Next we split up the event $\mc{D}_H$ further, distinguishing between clean $3$-cycles that contain $h_j$ and those that do not. Let $\mc{D}_{H,1}$ be the event that $H$ does not have any forbidden clean $3$-cycles which contain $h_j$, and let $\mc{D}_{H,0}$ be the event that $H$ does not have any forbidden clean $3$-cycles which do not contain $h_j$, so we have
\[\mc{D}_H = \mc{D}_{H,0} \cap \mc{D}_{H,1}.\]
Plugging this in above, we obtain
\begin{align}
\pi_j' &=  \frac{P(\mc{D}_{H,0} \cap \mc{D}_{H,1}\mid B_j \cap\mathcal U\cap \mathcal D)}{P(\mc{D}_{H,0} \cap \mc{D}_{H,1} \mid \mathcal U \cap\mathcal D)}\pi = \frac{P(\mc{D}_{H_0} \mid  B_j \cap\mathcal U\cap \mathcal D) \cdot P(\mc{D}_{H,1} \mid \mc{D}_{H,0} \cap B_j \cap\mathcal U\cap \mathcal D)}{P(\mc{D}_{H_0} \mid \mathcal U\cap \mathcal D) \cdot P(\mc{D}_{H,1} \mid \mc{D}_{H,0} \cap \mathcal U \cap\mathcal D)}\pi \nonumber \\
&= \frac{P(\mc{D}_{H,1} \mid \mc{D}_{H,0} \cap B_j \cap\mathcal U\cap \mathcal D)}{P(\mc{D}_{H,1} \mid \mc{D}_{H,0} \cap \mathcal U \cap\mathcal D)}\pi ,\label{eq:pijprimepi}
\end{align}
because $P(\mc{D}_{H_0} \mid  B_j \cap\mathcal U\cap \mathcal D) = P(\mc{D}_{H_0} \mid  \mathcal U\cap \mathcal D)$ since both events $B_j \cap\mathcal U\cap \mathcal D$ and $\mathcal U\cap \mathcal D$ only include or exclude certain hyperedges from $H$, and the forbidden clean $3$-cycles in $\mc{D}_{H_0}$ do not contain $h_j$, so their presence is independent from $B_j = \{h_j \in H\}$.

For a lower bound on $\pi_j'$, we will simply bound the denominator in \eqref{eq:pijprimepi} from above by $1$, but we need a useful lower bound on the numerator. For this, using the union bound, we write 
\begin{align}
P(\mc{D}_{H,1} \mid \mc{D}_{H,0} \cap B_j \cap\mathcal U\cap \mathcal D) &= 1-P(\mc{D}_{H,1}^c \mid \mc{D}_{H,0} \cap B_j \cap\mathcal U \cap \mathcal D) \nonumber  \\
& \ge 1-\sum_{\substack{c=\{h_j, h_i, h_k\} \\ \text{ clean $3$-cycle, } c \notin \mc{C}_H}} P(c \subseteq H \mid \mc{D}_{H,0} \cap B_j \cap\mathcal U \cap \mathcal D) \label{eq:sumoverc}
\end{align}
where the sum goes over all potential clean $3$-cycles $c=\{h_j, h_i, h_k\}$ containing $h_j$.

The conditional probability of $\{c \subseteq H\}$ is clearly $0$ if $\{i,k\}\cap(\bN_{j-1}\cup \bS_{j-1})\neq\emptyset$, so we assume that the intersection is empty. We distinguish the contributions of $c=\{h_j, h_i, h_k\}$ depending on the number $a=|\{i,j,k\}\cap \bY_{j-1}|$ of hyperedges known to be present. Clearly we cannot have $a=3$ since $j\not\in \bY_{j-1}$. But we also cannot have $a=2$: by \eqref{eq:pijprimepositive} the numerator in \eqref{eq:pijprimepi} is positive --- but it would be $0$ if there were a forbidden $3$-cycle $c \in \mc{D}_{H,1}$ where only $h_j$ is missing, as we condition on $B_j = \{h_j \in H\}$.

So we only need to count contributions to \eqref{eq:sumoverc} from $c=\{h_j, h_i, h_k\}$ with $a\in \{0,1\}$. We start with the case $a=1$, say $i\in Y_{j-1}$. Suppose that $h_j$ consists of the three vertices $v_1, v_2, v_3$, then without loss of generality we can assume that $h_j \cap h_i = \{v_1\}$ (as this only changes our bound by a factor of $3$).  As $\tilde H \notin \mc{B}_1 \cap \mc{B}_2$, we have $d_{\tilde H}(v_1) = n^{o(1)}$, so there are $n^{o(1)}$ choices for $h_i$, and with $h_j,h_i$ fixed, there are at most $n$ choices for the missing vertex to complete $h_k$, yielding $O(n^{1+o(1)})$ possible choices for $c$ with $a=1$.

On the other hand, letting $\mcle= \mc{D}_{H,0} \cap B_j \cap\mathcal U \cap \mathcal D$, we have $P(c\setle H|\mcle)=P(h_k\in H|\mcle)$. Using the auxiliary hypergraph with the hyperedges in the events $\{B_j \cap\mathcal U\}$ included deterministically and the remainder drawn independently with probability $\pi$, by Harris' inequality and independence obtain
\[P(c\setle H \mid \mathcal E) \le P(c \subseteq H \mid B_j \cap \mc U) = \pi.
\]
So the overall contribution to the sum in \eqref{eq:sumoverc} from $c$ with $a=1$ is $O(n^{1+o(1)} \pi) = n^{-1+o(1)}$.

For $a=0$, we choose the three remaining vertices in the clean $3$-cycle $c$, giving $O(n^3)$ choices for $c=\{h_j, h_i, h_k\}$ with $a=0$. In the same way as above, we obtain $P(c\setle H|\mcle)\le\pi^2$. So the overall contribution to the sum in \eqref{eq:sumoverc} from $c$ with $a=0$ is $O(n^3 \pi^2) = n^{-1+o(1)}$. Overall, we obtain from \eqref{eq:sumoverc} that
\[
P(\mc{D}_{H,1} \mid \mc{D}_{H,0} \cap B_j \cap\mathcal U\cap \mathcal D) \ge 1-n^{-1+o(1)},
\]
and plugging this into \eqref{eq:pijprimepi} and bounding the denominator by $1$ yields the required lower bound on $\pi_j'$, concluding the proof of c).

Finally, part d) follows analogously to the equivalent observation in Riordan's proof \cite{riordan2022random}, that is, the remark in the paragraph below Equation (7).  For brevity (but at the expense of not being self-contained), we only describe the required changes, so recall the proofs from \cite{riordan2022random,heckel2020random}, and in particular the relative error 
\[Q=Q_j = \sum_{i \in N_1} p^{|E_i \setminus (E_j \cup R)|} \]
from Equation (4) in \cite{riordan2022random}, and from the first equation on page 620 in \cite{heckel2020random}, so that $\pi_j \ge p^{r \choose 2} (1-Q)$. Recall that in \cite{riordan2022random}, the set $N_1$ comprises the indices of hyperedges $h_i$, $i<j$, where we decided $A_i$ does not hold, so that the edge set $E_i$ of the corresponding clique overlaps with $E_j \setminus R$ (where $R$ are the edges already known to be present at time $j$ of the algorithm). In \cite{heckel2020random} the set $N_1=N_{\mrme}\cup N_{\mrmc}$ contains indices $N_{\mrme}$ for such clique edge sets $E_i$ and indices $N_{\mrmc}$ for such overlapping edge sets $E_i$ of clean $3$-cycles that are known not to be present. In both cases, let $D_j$ be the set of indices $i \in N_1$ so that $E_i \subset E_j \cup R$.

The point in \cite{riordan2022random} was that if $\mc B_1 \cup \mc B_2$ does not hold, then for all $j$ so that $D_j =\emptyset$ we have $Q_j=o(1)$. Thus, a function $\pi \sim p^{r \choose 2}$ can be picked so that for all such $j$, $\pi_j \ge p^3 (1-Q)  \ge \pi$ --- and thus the coupling can not fail at such a step. Hence, in particular $\pi_j < \pi$ is \emph{only} possible if $\mc B_1 \cup \mc B_2$ holds, or if $D_j \neq \emptyset$. But the latter case implies $\pi_j=0$, since the edge set $E_i$ was decided earlier not to be completely present in $G$, but $E_i \subset E_j \cup R$ and $A_j$ would imply that $E_j \cup R $ is present in $G$.

In our setting, that of \cite{heckel2020random}, the same argument can be made: Suppose $\mc B_1 \cup \mc B_2$ does not hold and that $D_j = \emptyset$ --- we aim to bound $Q$ by $o(1)$. The contribution of $N_{\mrme}$ to $Q$ can be bounded $o(1)$ exactly as in \cite{riordan2022random}, so we only consider the contribution of $N_{\mrmc}$  to $Q$. This contribution is bounded by $o(1)$ in the last equation on page 620 in \cite{heckel2020random}, except if there is a clean $3$-cycle with edge set $E_i$ so that $k=0$, $e_i=0$ (for the definition of $k, e_i$ see \cite{heckel2020random}). But in the latter case it follows that $E_i \subseteq E_j \cup R$ (as noted in \cite[p.~620]{heckel2020random}), and hence $D_j \neq \emptyset$. Thus, we can choose $\pi \sim p^{r \choose 2}$ so that for all $j$ such that $D_j = \emptyset$, $\pi_j \ge p^3 (1-Q)  \ge \pi \ge \pi_j'$. Hence, we can only have $\pi_j < \pi_j'$ if $D_j \neq 0$, and in that case $\pi_j=0$ follows in exactly the same way as above. 
This completes the proof of part d).

\subsection*{Proof of Lemma~\ref{lemma:isolatedlowdegree}}
	Let  $d \ge 0$ be an integer. Our aim is to show that the expected number of vertices which are isolated in $H_-$, and with degree $d > 7g(n)$ in $H$, is $o(1)$.
	
	So let $d >0$ be an integer and $v$ a vertex, then setting $n_r={n-1 \choose r-1}$, the probability that $v$ has degree exactly $d$ in $H \sim H(n, \pi_+)$ is 
\[
{n_r\choose d} \pi_+^{d} (1-\pi_+)^{n_r - d} \lesssim \left(\frac{en_r \pi_+}{d}  \right)^{d}  \exp\left(-n_r\pi_+\right) \lesssim    \left(\frac{en_r \pi_+}{d}  \right)^{d} n^{-1}.
\] 
Given $H$, by \eqref{eq:hminuscoupling} we obtain $H_-$ by keeping every hyperedge in $H$ independently with probability exactly $\pi_-/\pi_+$ (at least as long as $H \notin \mc{B}$, otherwise we had not defined the construction). More accurately, $H_- = H_-'$ whp, where $H_-'$ is obtained by keeping every hyperedge of $H$ independently with probability $\pi_-/\pi_+$ (whether or not $H \in \mc{B}$). So, given $v$ with degree $d$ in $H$, the probability that $v$ is isolated in $H_-'$ is exactly $(1-\pi_-/\pi_+)^{d} $. So,
the probability that $v$ has degree $d$ in $H$ but is isolated in $H_-'$ is asymptotically at most
\[
 \left(\frac{en_r (\pi_+-\pi_-)}{d}  \right)^{d} n^{-1}= \left(\frac{2eg(n)}{d}\right)^{d} n^{-1}\le  \left(\frac{6g(n)}{d}  \right)^{d} n^{-1}.
\]
Now note that, as $g(n)\rightarrow \infty$, the expected number of vertices with degree $d > 7g(n)$ in $H$ which are isolated in $H'_-$ is bounded by
\[
n\sum_{d > 7g(n)}   \left(\frac{6g(n)}{d}  \right)^{d} n^{-1} = o(1).
\]
So whp no such vertex exists, and  since whp $H_-=H_-'$, the proof is complete.
\qed

\subsection*{Proof of Lemma \ref{lemma:isonotbad}}
By \eqref{eq:hminuscoupling}, it suffices to show the statement for $H_-' \sim H(n, \pi_-)$. So let $S$ be the set of pairs of partner hyperedges in $H_-'$. 

Whp all pairs of partner hyperedges in $S$ are vertex-disjoint (see Remark~\ref{remark:vertexdisjoint}). Therefore, it suffices to show that whp there is no vertex contained in a pair of partner hyperedges which is otherwise isolated. The expected number of such vertices is bounded by
\begin{align*}
& \binom{n}{r} \binom{r}{2} \binom{n-r}{r-2} \cdot (2r-2) \cdot \pi_-^2 (1-\pi_-)^{{n-1 \choose r-1}-2} = O\big(n^{2r-2}\pi_-^2e^{-\pi_- {n-1 \choose r-1}} \big ) =\\
&= O(\log^2 n\cdot e^{-(\log n -g(n))})= n^{-1+o(1)}.
\end{align*}
An application of the first-moment method concludes the proof.
\qed

\subsection*{Proof of two analytical lemmas from Section~\ref{section:hypergraphfactor}}

\begin{lemma}\label{lemma:rid_of_comp}
Let $t \in \{2, \ldots, r-s+1\}$. For any $c_1, \ldots, c_{t} \in \{1, \ldots, r-t+1\}$ with $\sum_{i=1}^{t} c_i = r$, 
\begin{align}\label{upper_parts}
\sum_{i=1}^{t} \binom{c_i}{s} \leq \binom{r-t+1}{s}.
\end{align}
\end{lemma}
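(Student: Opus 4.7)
The plan is a standard smoothing/majorisation argument based on the discrete convexity of $x \mapsto \binom{x}{s}$. Writing $F(c_1,\ldots,c_t) = \sum_{i=1}^t \binom{c_i}{s}$, the goal is to show that among feasible tuples (positive integers with $\sum c_i = r$ and $c_i \leq r-t+1$) the maximum of $F$ is attained at the extremal tuple $(r-t+1, 1, 1, \ldots, 1)$. Since $s \geq 3$ we have $\binom{1}{s} = 0$, so that extremal tuple evaluates to $\binom{r-t+1}{s}$, which matches the right-hand side of \eqref{upper_parts}.

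The engine is the discrete convexity inequality
\begin{equation*}
\binom{a+1}{s} + \binom{b-1}{s} \;\geq\; \binom{a}{s} + \binom{b}{s} \qquad \text{whenever } a \geq b \geq 1,
\end{equation*}
which I would prove in one line: by Pascal's identity the left-hand side minus the right-hand side equals $\binom{a}{s-1} - \binom{b-1}{s-1}$, which is nonnegative because $a \geq b > b-1$ and $\binom{\cdot}{s-1}$ is nondecreasing on $\mathbb{Z}_{\geq 0}$.

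To finish, I take any feasible tuple that is not the extremal one and produce a feasibility-preserving swap that does not decrease $F$. First, I observe that no coordinate can equal $r-t+1$: if one did, the sum constraint would force the remaining $t-1$ coordinates (each $\geq 1$) to sum to $t-1$ and hence to all equal $1$, contradicting non-extremality. So let $c_i$ be a largest coordinate (hence $c_i \leq r-t$) and pick any $j \neq i$ with $c_j \geq 2$; such a $j$ must exist, because otherwise $\sum_\ell c_\ell \leq c_i + (t-1) \leq r-1 < r$. The swap $(c_i, c_j) \mapsto (c_i+1, c_j-1)$ preserves the sum, keeps every coordinate in the range $[1, r-t+1]$, and by the displayed convexity inequality (applied with $a=c_i$, $b=c_j$) does not decrease $F$. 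Using $\sum_\ell c_\ell^2$ as a strictly increasing monovariant shows that the procedure terminates at the extremal tuple, which proves the claim. There is no real obstacle here — the statement is a routine majorisation inequality.
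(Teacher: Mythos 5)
Your proof is correct and takes essentially the same route as the paper: the identical smoothing swap $(c_i,c_j)\mapsto(c_i+1,c_j-1)$, justified by the same reduction (via Pascal's identity) to $\binom{c_j-1}{s-1}\le\binom{c_i}{s-1}$, driving every feasible tuple to the extremal configuration $(r-t+1,1,\dots,1)$. Your explicit monovariant $\sum_\ell c_\ell^2$ makes the termination step slightly more rigorous than the paper's informal ``sequence of steps'' remark, but the argument is the same.
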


\begin{proof}[Proof of \ref{lemma:rid_of_comp}]
The claim is that the sum in (\ref{upper_parts}) is maximised if one $c_i$ takes the maximal value $r-t+1$, while the remaining ones are equal to $1$. To prove this claim, w.l.o.g., consider a non-increasing sequence $(c_1, \ldots, c_{t}) \in \{1, \ldots, r-t+1\}^t$ with $\sum_{i=1}^{t} c_i = r$ that is different from $c^*:=(r-t+1,1,\dots,1)$, such that in particular $c_1 \leq r-t$. \\ Because of $c_1\leq r-t$, the fact that $c_1 \ge c_2 \ge \ldots \ge c_t$ and $\sum_{i=1}^{t} c_i = r$, this enforces $c_2 >1$. Define $c_1' := c_1+1, c_2':= c_2 -1$ and $c_i':=c_i$ for $i \ge 3$. This operation only increases the binomial sum from the lemma:
\begin{align}\label{eq_max_1}
\sum_{i=1}^{t} \binom{c_i}{s} \leq \sum_{i=1}^{t} \binom{c'_i}{s}.
\end{align}
By construction, (\ref{eq_max_1}) reduces to showing that
\begin{align}\label{eq_max_2}
 \binom{c_1}{s} + \binom{c_2}{s} \leq \binom{c_1+1}{s} + \binom{c_2-1}{s},
\end{align}
which is equivalent to
 \begin{align}\label{eq_max_3}
\binom{c_2-1}{s-1} \leq \binom{c_1}{s-1}.
\end{align}
Since $c_2 \leq c_1$, (\ref{eq_max_3}) is satisfied. Correspondingly, any non-increasing configuration $(c_1, \ldots, c_{t})\neq c^*$ can be changed into $c^*$ in a sequence of steps that does not decrease the binomial sum. This completes the proof.
\end{proof}

\begin{lemma}\label{lem_hstar}
Consider the function $w: [2, r-s+1] \to \mathbb{R}, w(t) := t - r + (r-1)\binom{r-t+1}{s}/\binom{r}{s}$. Then 
\begin{align*}
\max_{t \in [2,r-s+1]}w(t) = w(2) = 1-s+\frac{s}{r} < 0.
\end{align*}
\end{lemma}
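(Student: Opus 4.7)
The plan is to reduce the optimisation to an endpoint check after establishing that $w$ is convex on its integer arguments.

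First I would evaluate $w(2)$ using the elementary identity $\binom{r-1}{s}/\binom{r}{s}=(r-s)/r$, which immediately yields $w(2)=2-r+(r-1)(r-s)/r=1-s+s/r$. This confirms the claimed value, and since $s\ge 3$ and $r>s$ force $s/r<1$, it also gives $w(2)<2-s\le -1<0$, settling the sign claim.

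Next I would establish discrete convexity of $w$ on the integer points of $[2,r-s+1]$. Two applications of Pascal's identity produce the clean identity
\[
\binom{r-t+2}{s}-2\binom{r-t+1}{s}+\binom{r-t}{s}=\binom{r-t}{s-2}\ge 0
\]
for every integer $t$. Since the linear part $t-r$ has vanishing second difference, multiplying the display by the positive constant $(r-1)/\binom{r}{s}$ gives $w(t+1)-2w(t)+w(t-1)\ge 0$ for every interior $t\in\{3,\ldots,r-s\}$. Hence $w$ is convex on the relevant integer grid, so its maximum over $\{2,\ldots,r-s+1\}$ must be attained at one of the two endpoints.

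It then only remains to compare $w(r-s+1)$ with $w(2)$. Direct substitution with $\binom{s}{s}=1$ gives $w(r-s+1)=1-s+(r-1)/\binom{r}{s}$, so $w(r-s+1)\le w(2)$ is equivalent to $(r-1)/\binom{r}{s}\le s/r$, i.e.\ $r-1\le\binom{r-1}{s-1}$. Under the hypotheses $s\ge 3$ and $s\le r-1$, the index $s-1$ lies in $\{2,\ldots,r-2\}$, so unimodality of the binomial coefficient in its lower index gives $\binom{r-1}{s-1}\ge\binom{r-1}{1}=r-1$, closing the argument.

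The computation is largely mechanical; the one conceptual step is noticing that $w$ need not be monotonic on its domain (a quick numerical check with $r=10$, $s=3$ shows $w$ first decreases and then rises on $[2,r-s+1]$), which rules out a naive monotonicity attempt and naturally points to the convexity-plus-endpoint-comparison route.
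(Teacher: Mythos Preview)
Your argument follows the same overall strategy as the paper's proof: establish convexity of $w$, then reduce to an endpoint comparison. The endpoint computation is identical---both routes arrive at the equivalent inequalities $r-1\le\binom{r-1}{s-1}$ and $(r-2)\cdots(r-s+1)\ge (s-1)!$.

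The one genuine difference is in the convexity step: the paper differentiates the polynomial $t\mapsto\binom{r-t+1}{s}$ twice and checks $w''(t)>0$ on the real interval $[2,r-s+1]$, whereas you work discretely via the Pascal identity $\binom{r-t+2}{s}-2\binom{r-t+1}{s}+\binom{r-t}{s}=\binom{r-t}{s-2}$. Your version is cleaner and avoids calculus entirely. Note, though, that the lemma as stated asks for the maximum over the \emph{real} interval $[2,r-s+1]$, while your second-difference argument only controls the integer grid $\{2,\dots,r-s+1\}$. This is harmless for the application (in Lemma~\ref{lemma:sumbound_extension} the parameter $t$ is the number of components of a partition, hence an integer), but if you want to match the lemma literally you should either observe that the Pascal identity above holds as a polynomial identity in $t$ and that $\binom{r-t}{s-2}\ge 0$ for real $t\in[2,r-s+1]$ still does not immediately give $w''\ge 0$, or simply restrict the statement to integer $t$, which is all that is ever used.
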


\begin{proof}[Proof of Lemma~\ref{lem_hstar}]
We show that $w$ is strictly convex on $[2, r-s+1]$, $w(2)=1-s+s/r$ and  $w(r-s+1) = 1-s+(r-1)/\binom{r}{s} \leq w(2)$. For this, we compute 
\begin{align*}
w'(t) = 1 - \frac{r-1}{\binom{r}{s}s!} \sum_{j = 0}^{s-1} \prod_{\substack{\ell=0 \\ \ell \not= j}}^{s-1} (r-t-\ell+1)
\end{align*}
and 
\begin{align*}
w''(t) = \frac{r-1}{\binom{r}{s}s!}\sum_{j_1 = 0}^{s-1} \sum_{\substack{j_2 = 0 \\ j_2 \not= j_1}}^{s-1} \prod_{\substack{\ell=0 \\ \ell \notin \{j_1, j_2\}}}^{s-1} (r-t-\ell+1).
\end{align*}
In particular, $w''(t) >0$ on $[2, r-s+1]$ and $w$ is strictly convex. The computation of $w(2), w(r-s+1)$ is straightforward. To show that $w(2) \ge w(r-s+1)$, it is sufficient to show that $s \binom{r}{s} \ge r (r-1)$ or equivalently, that $(r-2)\cdot \ldots \cdot (r-s+1) \ge (s-1)!$. However, the left-hand side is increasing in $r$ with $r \ge s+1$. Plugging in $r=s+1$ establishes the claim.
\end{proof}

	\end{document}